\documentclass[a4paper]{article}

\usepackage[utf8]{inputenc}
\usepackage[T1]{fontenc}
\usepackage{amsfonts}

\usepackage{amsmath}
\usepackage{amssymb}
\usepackage{amsthm}
\usepackage{latexsym}
\usepackage{bm}

\usepackage[fixamsmath]{mathtools}
\mathtoolsset{showmanualtags,mathic,centercolon}
\usepackage{enumitem}
\usepackage[svgnames]{xcolor}
\usepackage{algpseudocode}
\usepackage{algorithm}
\usepackage{url}
\usepackage{graphicx}
\usepackage[margin=1.25in]{geometry}
\usepackage[labelfont=sl,textfont=sl]{subcaption}
\usepackage[font={small,sl},labelsep=period]{caption}

\numberwithin{equation}{section}
\theoremstyle{plain}
\newtheorem{theorem}{Theorem}[section]
\newtheorem{lemma}[theorem]{Lemma}
\newtheorem{corollary}[theorem]{Corollary}

\theoremstyle{definition}
\newtheorem{definition}[theorem]{Definition}
\newtheorem{example}[theorem]{Example}
\newtheorem{assumption}[theorem]{Assumption}

\theoremstyle{remark}

\newcommand{\R}{\mathbb{R}}

\newcommand{\bigO}{\mathcal{O}}

\newcommand{\grad}{\nabla}

\newcommand{\calD}{\mathcal{D}}

\renewcommand{\b}[1]{\bm{#1}} 
\newcommand{\ba}{\b{a}}

\newcommand{\bc}{\b{c}}
\newcommand{\bd}{\b{d}}
\newcommand{\be}{\b{e}}
\newcommand{\bg}{\b{g}}

\newcommand{\br}{\b{r}}

\newcommand{\bu}{\b{u}}
\newcommand{\bv}{\b{v}}
\newcommand{\bx}{\b{x}}
\newcommand{\by}{\b{y}}

\newcommand{\bA}{\b{A}}

\DeclareMathOperator*{\proj}{proj}
\newcommand{\gammainc}{\gamma_{\textnormal{inc}}}
\newcommand{\gammadec}{\gamma_{\textnormal{dec}}}
\newcommand{\flow}{f_{\textnormal{low}}}
\newcommand{\nb}{p}
\DeclareMathOperator*{\cm}{cm}
\newcommand{\cdec}{\sigma}
\DeclareMathOperator*{\col}{col}

\begin{document}
\title{Polling Set Construction and Worst-Case Complexity for Direct Search under Polyhedral Convex Constraints}
\author{
Lindon Roberts\thanks{School of Mathematics and Statistics \& ARC Training Centre in Optimisation Technologies, Integrated Methodologies, and Applications (OPTIMA), University of Melbourne, Parkville VIC 3010, Australia. ORCID 0000-0001-6438-9703 (\texttt{lindon.roberts@unimelb.edu.au}). 
This author was supported by the Australian Research Council Discovery Early Career Award DE240100006, 
and by CNRS IAE under the grant BONUS.
}
\and 
Cl\'ement W. Royer
\thanks{LAMSADE, CNRS, Universit\'e Paris Dauphine-PSL, Place du Mar\'echal de Lattre de Tassigny, 75016 Paris, France. ORCID 0000-0003-2452-2172 (\texttt{clement.royer@lamsade.dauphine.fr}).
This author was partially supported by Agence Nationale de la Recherche 
through program ANR-23-IACL-0008 (PR[AI]RIE-PSAI), 
and by CNRS IAE under the grant BONUS.
}}

\date{\today}
\maketitle

\begin{abstract}
Direct search is one of the most popular derivative-free optimization 
paradigms, that relies on exploring the variable space using polling 
directions. To analyze and implement direct search, one typically relies 
on positive spanning sets. This concept is somewhat decorrelated from 
interpolation-based sets used in model-based algorithms, another class of 
derivative-free optimization methods. This discrepancy is even more 
pronounced for constrained problems, where recent advances in the 
interpolation-based setting have produced a unified picture that is 
still lacking in the direct-search case.

In this paper, we introduce a new theoretical underpinning for 
direct-search methods, that can be defined for general convex 
constraints and lead to complexity guarantees, as in the model-based 
setting. By focusing on polyhedral convex constraints, we are able to 
construct polling sets that meet our new theoretical requirements. In 
particular, our polling sets necessarily include directions outside of the 
approximate tangent cone, giving theoretical justification to existing 
practical heuristics which incorporate this 
idea. Our numerical results confirm that adding these extra directions 
significantly improves practical performance.
\end{abstract}

\noindent{\bfseries AMS Subject Classifications:}
49M37; 65K05; 65K10; 90C30; 90C56.


\noindent{\bfseries Keywords:}
derivative-free optimization; direct search; constrained optimization.

\section{Introduction}
\label{sec_intro}

Optimization problems that arise in practice commonly feature black-box, 
computationally expensive, or noisy objectives~\cite{Alarie2021}. As a result, 
standard derivative-based algorithms cannot be used to tackle these problems, 
but derivative-free optimization (DFO) methods form a useful 
alternative~\cite{Audet2025,Conn2009,Larson2019}. 
%
Among DFO methods, \emph{direct search} schemes are quite popular because of 
their simplicity of description and analysis~\cite{Dzahini2025,Kolda2003}, 
and have given rise to efficient practical techniques~\cite{Audet2022}. Each 
iteration of a direct-search methods builds a finite \emph{polling set} of 
(feasible) points near the current iterate. The quality of polling sets is 
instrumental to obtaining convergence guarantees for direct-search algorithms. 
In particular, when the problem is unconstrained, convergence is ensured by 
using \emph{positive spanning sets} (PSSs), that span the variable space 
through nonnegative linear combinations, as polling sets. Moreover, by 
ensuring those sets have good enough \emph{cosine measure}, worst-case 
complexity results can be established~\cite{Vicente2013}. Although using 
random directions has recently gained popularity as a scalable alternative 
to positive spanning sets~\cite{Gratton2015,Roberts2023}, the use of the 
latter remains necessary to guarantee deterministic convergence.

Choosing polling sets in presence of constraints is a significant challenge, 
as the feasible set geometry must be taken in account. To the best of our 
knowledge, and despite recent global convergence results on this 
topic~\cite{XJia_MLapucci_PMansueto_2025}, there is no direct-search technique 
equipped with complexity guarantees for convexly constrained problems. Nevertheless, a 
number of approaches have been developed to tackle the linearly constrained 
case~\cite{Abramson2008,Audet2015,Kolda2003,Kolda2007,Lewis2000}, with bound 
constraints being the subject of dedicated 
investigation~\cite{Brilli2024,Lewis1999,Lucidi2002}. In the bound-constrained 
setting, using coordinate directions and their negative as polling sets gives 
rise to feasible iterate methods with convergence 
guarantees~\cite{Lewis1999,Lucidi2002}. This approach can be extended to 
general linear inequality constraints, by requiring polling sets to generate 
approximate tangent cones defined by nearby constraints, using nonnegative 
linear combinations as in the case of PSSs~\cite{Kolda2007,
Lewis2000}. Since approximate tangent cones can only be described using finite 
generators when they are polyhedral (e.g.~\cite[Theorem 2.8.8]{Stoer1970}), 
this approach does not extend to the convexly constrained case.
Still, these polling choices not only lead to global convergence results, 
but also allow for a worst-case complexity analysis the in presence of general 
linear constraints~\cite{Gratton2019}. 
However, in practice, the performance of these algorithms is greatly improved 
by searching along directions defined by the constraint 
normals~\cite{Lewis2007}. Thus, even though the concept of PSS has been used 
successfully to obtain theoretically sound direct-search techniques in the 
linearly constrained setting, the practical implementations of these methods 
has yet to be fully understood.

Meanwhile, another class of DFO algorithms termed model-based algorithms, 
proceeds by building models of the objective function, that 
can then be minimized (typically over a trust region) in order to produce a 
candidate for the next function evaluation~\cite{Conn2009}. 
Model-based DFO methods consider interpolation sets at which function 
values are available to construct such models~\cite{Roberts2025}. Provided 
those sets satisfy a geometric quality condition termed $\Lambda$-poisedness, 
the resulting algorithms can be endowed with 
complexity analysis in both unconstrained and convexly constrained 
settings~\cite{Hough2022,Roberts2025}. Unlike in direct-search methods, the 
use of approximate tangent cones is not required in model-based DFO, 
and thus the notion of $\Lambda$-poisedness readily extends to the convex 
setting~\cite{Roberts2025convex}. Overall, despite both model-based 
and direct-search schemes being developed with geometrical insights, the 
connection between polling directions and interpolation sets remains 
underexplored. 

In this paper, we introduce the notion of $\Lambda$-positive spanning sets ($\Lambda$-PSS), 
a novel property for polling sets in direct search inspired by $\Lambda$-poisedness in 
model-based derivative-free methods. In the unconstrained setting, our 
definition is similar to that of positive spanning sets, and leads to 
comparable complexity guarantees. For linearly constrained problems, our 
$\Lambda$-PSS theory relies on using polling directions lying outside of approximate 
tangent cones, thus departing from existing theory, but not standard practice. 
In fact, our explicit constructions of $\Lambda$-PSS provide a theoretical 
grounding for popular implementations~\cite{Gratton2019,Lewis2007}. 
More broadly, our numerical experiments demonstrate that using generators 
of approximate tangent cones and their negatives (a valid $\Lambda$-PSS 
construction) outperforms both classical approaches with theoretical 
guarantees, that only rely on approximate tangent cones, and standard 
practice, which leverages active constraint normals.

The rest of this paper is organized as follows. 
Section~\ref{sec_background} provides background material on direct-search 
methods for smooth unconstrained and linear inequality constrained problems. 
Section~\ref{sec_lbda_pss} introduces a new approach for measuring the 
quality of polling sets, which is then used in Section~\ref{sec_newwcc} to 
revisit complexity guarantees for direct search on both unconstrained and 
(polyhedral) convex constrained problems. Section~\ref{sec_poll_sets} 
provides detailed constructions for polling sets in presence of explicit 
linear constraints. The numerical performance of this approach is 
showcased in Section~\ref{sec_numerics}.

\paragraph{Notation}
Throughout the paper, we use $\|\cdot\|$ to be the Euclidean norm of vectors 
and operator 2-norm (i.e.~largest singular value) of matrices. Given a matrix 
$\bA$, we denote its Moore-Penrose pseudoinverse as $\bA^{\dagger}$ and the 
set of its column vectors by $\col(\bA)$. Given a 
cone $K\subseteq\R^n$, we denote its polar cone as 
$K^{\circ} := \{\by\in\R^n : \by^T \bx \leq 0, \: \forall \bx\in K\}$.
For $\bx\in\R^n$ and $\alpha>0$ we define 
$B(\bx,\alpha) := \{\by\in\R^n : \|\by-\bx\| \leq \alpha\}$ to be the closed 
ball of radius $\alpha$ centered at $\bx$. Finally, the projection onto a 
convex set $S$ will be denoted by $\proj_S[\bv]$.

\section{Linearly constrained problems and direct search} 
\label{sec_background}

In this paper, we study direct-search methods applied to solving problems of 
the form
\begin{align}
    \min_{\bx\in\Omega} \: f(\bx), \label{eq_convex_cons_problem}
\end{align}
where we assume that $f$ is smooth and $\Omega$ is a polyhedral set, in the 
sense of the following two assumptions.

\begin{assumption} \label{ass_smoothness}
    The objective function $f$ \eqref{eq_convex_cons_problem} is $C^1$, its 
    gradient $\grad f$ is $L$-Lipschitz continuous for some $L>0$, and $f$ is 
    bounded below on $\Omega$ by $\flow$.
\end{assumption}

\begin{assumption} \label{ass_omega}
	The feasible set $\Omega$ is a a polyhedral convex set with nonempty 
	interior. Moreover, it admits a description of the form
	\begin{align}
    	\Omega = \{\bx\in\R^n : \ba_i^T \bx \leq b_i, \: \forall i=1,\ldots,m\}.
    \label{eq_linear_cons}
	\end{align}
\end{assumption}

Note that the description~\eqref{eq_linear_cons} covers the unconstrained 
setting ($m=1$, $\ba_1=\mathbf{0}$ and $b_1=0$), but that $\Omega$ cannot 
be a proper subspace of $\R^n$. Linear equality constraints, that define 
linear subspaces, can however be accounted for by other techniques closer to the 
unconstrained setting~\cite{Gratton2019}, and are not the focus of this paper.

The rest of this section describes existing results on direct-search schemes 
applied to problem~\eqref{eq_convex_cons_problem}. In particular, we recall 
existing complexity results in the unconstrained and linearly constrained 
setting, that leverage the concept of positive spanning set (PSS).

\subsection{Direct-search algorithm}
\label{sec_background_dsalgo}

Algorithm~\ref{alg_ds_basic} describes a generic direct-search framework for 
solving problem~\eqref{eq_convex_cons_problem}. The method starts with a 
feasible point and explores the space along directions that preserve 
feasibility. At every iteration, a finite set of $\nb$ polling directions 
$\mathcal{D}_k \subset \R^n$ is selected, and the function value is queried 
at $\bx_k + \alpha_k \bd$ for $\bd\in\mathcal{D}_k$, where $\bx_k$ is the 
current iterate and $\alpha_k$ is a dynamically updated stepsize. If a polled 
point with sufficiently small objective value is found, then this becomes the 
new iterate and the stepsize is usually increased. Otherwise, the iterate does 
not change and the stepsize is reduced.

\begin{algorithm}[tb]
\begin{algorithmic}[1]
\Statex \textbf{Inputs:} $\bx_0 \in \Omega$, $\alpha_{\max} > 0$, 
	$\alpha_0 \in (0,\alpha_{\max}]$, $\cdec>0$, 
    $0 < \gammadec < 1 < \gammainc$.
\vspace{0.2em}
\For{$k=0,1,2,\ldots$}
    \State Compute a polling set $\mathcal{D}_k \subset \R^n$.
    \State If there exists $\bd_k \in \mathcal{D}_k$ such that 
    $\bx_k + \alpha_k \bd_k \in \Omega$ and
    \begin{align}
        f(\bx_k+\alpha_k \bd_k) < f(\bx_k) - \frac{\cdec}{2}\alpha_k^2, 
        \label{eq_sufficient_decrease}
    \end{align}
    set $\bx_{k+1}:=\bx_k+\alpha_k \bd_k$ and 
    $\alpha_{k+1}:=\min\{\gammainc \alpha_k, \alpha_{\max}\}$ 
    (``successful iteration''). 
    \State Otherwise, set $\bx_{k+1}:=\bx_k$ and 
    $\alpha_{k+1}:=\gammadec \alpha_k$ (``unsuccessful iteration'').
\EndFor
\end{algorithmic}
\caption{Basic direct-search method for \eqref{eq_convex_cons_problem}.}
\label{alg_ds_basic}
\end{algorithm}

Algorithm~\ref{alg_ds_basic} relies on a sufficient decrease 
condition~\eqref{eq_sufficient_decrease} to accept trial points, which is 
necessary to obtain complexity guarantees~\cite{Dzahini2025,Vicente2013}. 
However, our condition departs from standard choices in that it does not 
involve $\|\bd_k\|$. Removing the norm dependency turns out to be critical 
for our new polling set technique, as 
will be shown in Section~\ref{sec_newwcc}.

\subsection{Positive spanning sets and the unconstrained case}
\label{sec_background_pss}

In this section, we focus on the unconstrained case, i.e. 
problem~\eqref{eq_convex_cons_problem} with $\Omega=\R^n$. In this setting, 
convergence of Algorithm~\ref{alg_ds_basic} can be guaranteed by ensuring 
that every polling set $\calD_k$ contains at least one descent direction, i.e. 
a direction making an acute angle with the negative gradient. When $\calD_k$ 
is chosen as a \emph{positive spanning set} (for $\R^n$), this property is 
ensured without access to derivative information.

\begin{definition} \label{def_pss}
    A set $\{\bd_1,\ldots,\bd_p\}\subset\R^n$ is a positive spanning set 
    (PSS) for $\R^n$ if, for any $\bv\in\R^n$, there exist constants 
    $c_1,\ldots,c_p \geq 0$ such that $\bv=\sum_{i=1}^{p} c_i \bd_i$.
\end{definition}

\begin{lemma}[Theorem 3.1, \cite{Davis1954}] \label{lem_pss_descent_orig}
    The set $\mathcal{D}:=\{\bd_1,\ldots,\bd_p\}\subset\R^n$ is a PSS for 
    $\R^n$ if and only if 
    \begin{align}
        \min_{\bv\neq\bm{0}} \max_{\bd\in \mathcal{D}} \: \bd^T \bv > 0. 
     \label{eq_strict_descent}
    \end{align}
\end{lemma}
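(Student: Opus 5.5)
We prove the two implications separately. The forward direction is a short direct argument. The converse rests on a separating hyperplane (Farkas-type) argument applied to the finitely generated cone $K := \{\sum_{i=1}^p c_i \bd_i : c_i \geq 0\}$.

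\emph{PSS $\Rightarrow$ \eqref{eq_strict_descent}.} Fix $\bv\neq\bm{0}$. Since $\mathcal{D}$ is a PSS, the vector $\bv$ itself can be written as $\bv=\sum_i c_i\bd_i$ with $c_i\geq 0$. Then
\begin{align*}
0<\|\bv\|^2=\sum_{i=1}^p c_i\,\bd_i^T\bv,
\end{align*}
so at least one term satisfies $c_i\bd_i^T\bv>0$, and hence $\bd_i^T\bv>0$. Therefore $\max_{\bd\in\mathcal{D}}\bd^T\bv>0$ for every $\bv\neq\bm 0$.

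To turn this pointwise statement into a strictly positive minimum, we use homogeneity. The function $\bv\mapsto\max_{\bd\in\mathcal{D}}\bd^T\bv$ is positively homogeneous of degree one. We therefore read the minimum in \eqref{eq_strict_descent} as taken over the unit sphere; this is the only sensible reading, since otherwise the infimum over all $\bv\neq\bm 0$ would trivially be $0$ by scaling. On the sphere, the function is a maximum of finitely many linear functions, hence continuous, and the sphere is compact. So the minimum is attained, and it is positive by the pointwise argument above.

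\emph{\eqref{eq_strict_descent} $\Rightarrow$ PSS.} Suppose $\mathcal{D}$ is not a PSS, so there is some $\bv_0\notin K$. Because $K$ is finitely generated, it is a closed convex cone (a standard fact, e.g.\ via Carathéodory or Weyl). We separate $\bv_0$ from $K$ using the projection $\bu:=\proj_K[\bv_0]$ and set $\by:=\bv_0-\bu\neq\bm 0$. The projection characterization onto a closed convex cone gives
\begin{align*}
\by^T\bx\leq 0 \quad\text{for all } \bx\in K, \qquad\text{and}\qquad \by^T\bu=0.
\end{align*}
In particular, $\bd_i^T\by\leq 0$ for every $i$. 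This means $\max_{\bd\in\mathcal{D}}\bd^T\by\leq 0$ for the nonzero vector $\by$, which contradicts \eqref{eq_strict_descent}.

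The only step that needs genuine care is the closedness of $K$, on which the projection and separation argument depends. If we do not want to cite it, we can prove it directly. By Carathéodory's theorem, $K$ is the finite union of cones generated by linearly independent subsets of $\mathcal{D}$. Each such cone is the image of the nonnegative orthant under an injective linear map, and is therefore closed. A finite union of closed sets is closed.
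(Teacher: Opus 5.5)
Your proof is correct and complete. The paper does not prove this lemma itself; it quotes it from Davis (1954, Theorem~3.1), so there is no in-paper argument to match.

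Your forward direction is the standard one: expand $\|\bv\|^2=\sum_i c_i\bd_i^T\bv$, then use positive homogeneity and compactness of the unit sphere. You are right that the minimum in \eqref{eq_strict_descent} only makes sense after normalization, or when read pointwise as ``$\max_{\bd\in\calD}\bd^T\bv>0$ for every $\bv\neq\bm{0}$''. The normalized reading is exactly how the paper uses the lemma when it concludes that $\calD$ is a PSS if and only if $\cm(\calD)>0$.

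For the converse, your projection argument, together with the Carath\'eodory proof that $K$ is closed, is in effect a from-scratch proof of Farkas' lemma. The paper's own toolkit would shorten this step. In the proof of Lemma~\ref{lem_descent_implies_pss_new} it invokes Farkas' lemma directly. Applied here, with $D$ the matrix whose columns are the $\bd_i$: if $D\bc=\bv_0$, $\bc\geq\bm{0}$ has no solution, then some $\by$ satisfies $D^T\by\geq\bm{0}$ and $\bv_0^T\by<0$. Hence $-\by\neq\bm{0}$ and $\bd_i^T(-\by)\leq 0$ for all $i$, which violates the condition.

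Citing Farkas buys brevity. Your route buys self-containment, and it makes explicit that closedness of a finitely generated cone is precisely the ingredient Farkas' lemma hides.
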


It follows from Lemma~\ref{lem_pss_descent_orig} that a set of (nonzero) 
vectors $\calD$ is a PSS if and only if $\cm(\calD)>0$, where $\cm(\calD)$ 
is the \emph{cosine measure} of $\calD$ defined by
\begin{align}
    \operatorname{cm}(\mathcal{D}) := 
    \min_{\bv\neq\bm{0}} 
    \max_{\substack{\bd\in\mathcal{D} \\ \bd \neq \bm{0}}} 
    \frac{\bd^T \bv}{\|\bd\|\, \|\bv\|}. 
    \label{eq_cosine_measure}
\end{align}

Assuming that the sequence $\{\cm(\calD_k)\}$ is uniformly bounded away 
from zero leads to both convergence and complexity 
guarantees~\cite{Dzahini2025, Vicente2013}.

\begin{assumption} \label{ass_kappa_descent}
    At every iteration of Algorithm~\ref{alg_ds_basic}, the set $
    \mathcal{D}_k$ satisfies $\operatorname{cm}(\mathcal{D}_k) \geq \kappa$, 
    and $d_{\min} \leq \|\bd\| \leq d_{\max}$ for all $\bd\in \mathcal{D}_k$, 
    for some $\kappa>0$ and $d_{\max} \geq d_{\min} > 0$ independent of $k$. 
    Moreover, $\calD_k$ has at most $p$ vectors, where $p$ is a positive 
    integer value.
\end{assumption}

\begin{theorem} \label{thm_ds_wcc_old}
    Suppose Assumptions~\ref{ass_smoothness} and~\ref{ass_kappa_descent} 
    hold. Then Algorithm~\ref{alg_ds_basic} achieves 
    $\|\grad f(\bx_k)\| \leq \epsilon$ after at most 
    $\bigO(\kappa^{-2} \epsilon^{-2})$ iterations, or 
    $\bigO(p\kappa^{-2} \epsilon^{-2})$ objective evaluations.
    Hence $\liminf_{k\to\infty} \|\grad f(\bx_k)\| = 0$.
\end{theorem}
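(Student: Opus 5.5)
The plan follows the standard Vicente-type argument, adapted to the sufficient decrease condition~\eqref{eq_sufficient_decrease}, which does not involve $\|\bd_k\|$. It has three steps: show that unsuccessful iterations force the gradient to be small relative to $\alpha_k$; bound the number of successful iterations with $\alpha_k$ above a threshold; and count unsuccessful iterations through the stepsize update rule.

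\emph{Step 1 (unsuccessful iterations).} Take an unsuccessful iteration $k$, so $f(\bx_k+\alpha_k\bd)\geq f(\bx_k)-\frac{\cdec}{2}\alpha_k^2$ for all $\bd\in\calD_k$. If $\grad f(\bx_k)\neq \bm{0}$, the definition~\eqref{eq_cosine_measure} with $\bv=-\grad f(\bx_k)$ gives some $\bd\in\calD_k$ with $-\bd^T\grad f(\bx_k)\geq \kappa\|\bd\|\,\|\grad f(\bx_k)\|$. The $L$-smoothness bound from Assumption~\ref{ass_smoothness} then yields
\begin{align*}
-\frac{\cdec}{2}\alpha_k^2 \leq f(\bx_k+\alpha_k\bd)-f(\bx_k)\leq -\kappa\alpha_k\|\bd\|\,\|\grad f(\bx_k)\| + \frac{L}{2}\alpha_k^2\|\bd\|^2 .
\end{align*}
Dividing by $\kappa\alpha_k\|\bd\|$ and using $d_{\min}\leq\|\bd\|\leq d_{\max}$ gives
\begin{align*}
\|\grad f(\bx_k)\|\leq c\,\alpha_k, \qquad c:=\frac{\cdec/d_{\min}+L d_{\max}}{2\kappa}.
\end{align*}
This step is where the norm-free decrease condition requires the lower bound $d_{\min}$.

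\emph{Step 2 (successful iterations).} Suppose $\|\grad f(\bx_j)\|>\epsilon$ for all $j\leq k$. Every unsuccessful iteration $j$ then has $\alpha_j>\epsilon/c$. By induction, $\alpha_j\geq \min\{\alpha_0,\gammadec\epsilon/c\}=:\alpha_\epsilon$ for all $j\leq k$: a decrease only happens after an unsuccessful iteration, where $\alpha_j>\epsilon/c$, and increases are capped at $\alpha_{\max}\geq\alpha_0$. Each successful iteration decreases $f$ by at least $\frac{\cdec}{2}\alpha_\epsilon^2$, and $f\geq\flow$. Hence the number of successful iterations is at most
\begin{align*}
\frac{2(f(\bx_0)-\flow)}{\cdec\,\alpha_\epsilon^2}=\bigO(\kappa^{-2}\epsilon^{-2}),
\end{align*}
since $c=\bigO(\kappa^{-1})$.

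\emph{Step 3 (counting unsuccessful iterations).} The update rule gives
\begin{align*}
\alpha_k\leq \alpha_0\gammainc^{|S_k|}\gammadec^{|U_k|},
\end{align*}
where $S_k$ and $U_k$ are the sets of successful and unsuccessful iterations up to $k$. Combining this with $\alpha_k\geq\alpha_\epsilon$ gives
\begin{align*}
|U_k|\leq \frac{\log\gammainc}{|\log\gammadec|}\,|S_k|+\frac{\log(\alpha_0/\alpha_\epsilon)}{|\log\gammadec|}.
\end{align*}
The second term is only $\bigO(\log\epsilon^{-1})$. The total iteration count is therefore $\bigO(\kappa^{-2}\epsilon^{-2})$, and since each iteration uses at most $p$ evaluations, the evaluation count is $\bigO(p\kappa^{-2}\epsilon^{-2})$.

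\emph{Conclusion.} Since $\epsilon>0$ is arbitrary, the iterates cannot have $\|\grad f(\bx_k)\|\geq\epsilon$ for all large $k$. Applying the bound to the tail of the sequence, restarted at any iterate with the same constants, gives $\liminf_{k\to\infty}\|\grad f(\bx_k)\|=0$. The main obstacle is the bookkeeping in Step 2, namely the lower bound on $\alpha_j$ in the presence of the cap $\alpha_{\max}$. Step 1 is routine once $d_{\min}$ absorbs the norm-free decrease term.
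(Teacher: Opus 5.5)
Your proof is correct and follows the standard argument that the paper cites for this result and reproduces, in $\Lambda$-PSS form, for Theorem~\ref{thm_wcc_new}. Your Step~1 plays the role of Lemma~\ref{lem_eventual_success_new}, using the cosine measure to pick a single good direction instead of a nonnegative decomposition of the scaled negative gradient, and the lower bound on $\alpha_k$ and the success/unsuccess counting are the same as the paper's. Your threshold $\alpha_\epsilon=\min\{\alpha_0,\gammadec\epsilon/c\}$ is slightly more careful than the paper's $\alpha_{\min}$, which omits the $\alpha_0$ term.
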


A canonical choice for $\mathcal{D}_k$ is the set of positive and negative 
coordinate vectors, $\mathcal{D}_k = \{\pm \be_1, \ldots, \pm\be_n\}$, which 
satisfies Assumption~\ref{ass_kappa_descent} with $\kappa=1/\sqrt{n}$ and 
$d_{\min}=d_{\max}=1$. Using this polling set at every iteration yields  
complexity guarantees in $\bigO(n\epsilon^{-2})$ iterations and 
$\bigO(n^2 \epsilon^{-2})$ objective evaluations, that are optimal for direct 
search in terms of dependencies on $n$~\cite{Dodangeh2016}.

\subsection{Linearly constrained case} 
\label{sec_background_lincons}

We now turn to the linearly constrained setting. Under 
Assumptions~\ref{ass_smoothness} and~\ref{ass_omega}, first-order stationarity 
can be assessed using the following measure~\cite{Cartis2012,Hough2022}:
\begin{align}
    \pi(\bx) := \left|\min_{\substack{\bx+\bv\in\Omega \\ \|\bv\| \leq 1}} 
    \grad f(\bx)^T \bv \right|, \qquad \forall \bx\in\Omega. 
    \label{eq_convex_criticality}
\end{align}
Indeed, we always have $\pi(\bx) \geq 0$, and $\pi(\bx)=0$ if and only 
if $\bx$ is first-order critical 
for~\eqref{eq_convex_cons_problem}~\cite[Theorem 12.1.6]{Conn2000}. Moreover, 
since $\Omega$ is assumed to be closed, the 
problem~\eqref{eq_convex_criticality} always has a solution. Letting 
$\bv^*(\bx)$ denote such a solution, it follows that 
\begin{align}
    \pi(\bx) = -\grad f(\bx)^T \bv^*(\bx), 
	\quad
	\bx+\bv^*(\bx)\in\Omega,
	\quad
	\|\bv^*(\bx)\| \leq 1.
    \label{eq_pi_inner_product_defn}
\end{align}

Deriving convergence guarantees for Algorithm~\ref{alg_ds_basic} requires 
to relate the polling sets to the stationarity 
measure~\eqref{eq_convex_criticality}. To this end, one must rely on more 
than positive spanning sets, and use directions that conform to the 
geometry of nearby constraints.
More precisely, given a feasible point $\bx\in\Omega$ and tolerance 
$\alpha \geq 0$, we define the \emph{nearly active} constraints at $\bx$ to be 
those whose boundaries are within distance $\alpha$ of $\bx$, i.e.
\begin{align}
    I(\bx,\alpha) := \{ i\in\{1,\ldots,m\} : 
    b_i - \alpha\|\ba_i\|^2 \leq \ba_i^T \bx \}, 
    \label{eq_lincons_nearly_active}
\end{align}
noting that we always have $\ba_i^T \bx \leq b_i$ for all $i$ since $\bx$ 
is feasible. The approximate normal cone of $\Omega$ at $\bx$ is then defined 
as
\begin{align}
    N_{\Omega}(\bx,\alpha) := 
    \operatorname{cone}(\{\ba_i : i\in I(\bx,\alpha)\}), 
    \label{eq_lincons_normal_cone}
\end{align}
while the approximate tangent cone is the polar of $N_{\Omega}(\bx,\alpha)$, 
i.e.~$T_{\Omega}(\bx,\alpha) := N_{\Omega}(\bx,\alpha)^{\circ}$. When 
$\alpha=0$, these cones coincide with the usual tangent and normal cones from 
constrained optimization~\cite[Section 12.2]{Nocedal2006}.

To guarantee convergence of Algorithm~\ref{alg_ds_basic}, it suffices to 
use polling sets that contain generators of tangent cones at every iteration.  
Complexity results are obtained assuming sufficient quality of those sets, in 
the sense of an extension of the cosine measure to the linearly constrained 
case~\cite{Gratton2019,Kolda2007}.

\begin{assumption} \label{ass_poll_generators_lincons}
    At every iteration of Algorithm~\ref{alg_ds_basic}, the set $\calD_k$ 
    consists of unit vectors, and satisfies 
    \begin{align*}
        \operatorname{cm}_{T_{\Omega}(\bx_k,\alpha_k)}(\mathcal{D}_k) 
        := 
        \inf_{\substack{\bv\in\R^n \\ 
        	\proj_{T_{\Omega}(\bx_k,\alpha_k)}(\bv) \neq \bm{0}}} 
        \: 
        \max_{\bd\in\mathcal{D}_k} 
        \frac{\bd^T \bv}{\|\bd\| \: \|\proj_{T_{\Omega}(\bx_k,\alpha_k)}(\bv)\|} 
        \ge \kappa 			
    \end{align*}
    for some $\kappa>0$.
\end{assumption}

Note that the use of unit vectors was made for simplicity in Gratton et 
al.~\cite{Gratton2019}, and that the theory can on principle be extended to 
allow for uniform bounds on the direction norms, as in 
Assumption~\ref{ass_kappa_descent}. A key contribution of this paper is to 
allow for the minimum direction norm to vary with the iteration (and the 
stepsize).

Note also that 
$\operatorname{cm}_{T_{\Omega}(\bx_k,\alpha_k)}(\calD_k) \geq \kappa > 0$ 
implies that $\calD_k$ contains a set of generators for the approximate 
tangent cone. Since the only finitely generated cones are 
polyhedral~\cite[Theorem 2.8.8]{Stoer1970}, this strategy cannot be extended to 
general convex sets using polling sets of finite cardinality. In the linearly 
constrained setting, however, the reasoning from the unconstrained case leads to 
both convergence and complexity results~\cite{Gratton2019,Kolda2007}. 

\begin{theorem} \label{thm_lincons_ds_conv_old}
	Let Assumptions~\ref{ass_smoothness}, \ref{ass_omega} 
	and~\ref{ass_poll_generators_lincons} hold. Suppose further that the 
	gradient of $f$ is bounded on $\Omega$. Then, Algorithm~\ref{alg_ds_basic}
    achieves $\pi(\bx_k) \leq \epsilon$ after at most 
    $\bigO(\kappa^{-2} \epsilon^{-2})$ iterations, or 
    $\bigO(p\kappa^{-2} \epsilon^{-2})$ objective evaluations.
    Hence $\liminf_{k\to\infty} \pi (\bx_k) = 0$.
\end{theorem}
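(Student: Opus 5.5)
We follow the standard two-part direct-search argument: a bound on the number of successful iterations obtained from the sufficient decrease condition, followed by a relation between unsuccessful iterations and the criticality measure $\pi$. Let $G := \sup_{\bx\in\Omega}\|\grad f(\bx)\|$.

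\textbf{Step 1: an unsuccessful iteration forces a small stepsize.} Suppose iteration $k$ is unsuccessful. We split on the size of $\alpha_k$ relative to $\pi(\bx_k)$.

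If $\alpha_k$ is small compared with $\pi(\bx_k)$, we take $\bv = -\grad f(\bx_k)$ in Assumption~\ref{ass_poll_generators_lincons}. This yields a unit vector $\bd\in\calD_k$ with
\begin{align*}
-\grad f(\bx_k)^T\bd \geq \kappa\,\|\proj_{T_\Omega(\bx_k,\alpha_k)}(-\grad f(\bx_k))\|.
\end{align*}
The polling set may be taken to consist of generators of the approximate tangent cone, possibly after restricting to those generators. Under the definition~\eqref{eq_lincons_nearly_active}, any direction of norm at most $\alpha_k$ in that cone stays feasible, because every constraint not nearly active is at distance more than $\alpha_k$. Hence $\bx_k+\alpha_k\bd\in\Omega$. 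By Assumption~\ref{ass_smoothness},
\begin{align*}
f(\bx_k+\alpha_k\bd)\leq f(\bx_k)-\alpha_k\kappa\,\|\proj_{T}(-\grad f(\bx_k))\|+\tfrac{L}{2}\alpha_k^2 .
\end{align*}
Since the iteration failed, this gives $\|\proj_{T}(-\grad f(\bx_k))\|\leq (L+\cdec)\alpha_k/(2\kappa)$.

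\textbf{Step 2 (main obstacle): relate $\pi(\bx_k)$ to the projection onto the approximate tangent cone.} We use the Moreau decomposition
\begin{align*}
-\grad f(\bx_k)=\proj_{T}(-\grad f(\bx_k))+\proj_{N}(-\grad f(\bx_k)),
\end{align*}
with $N=N_\Omega(\bx_k,\alpha_k)$, and the solution $\bv^*$ from~\eqref{eq_pi_inner_product_defn}. Then
\begin{align*}
\pi(\bx_k)=\proj_T(-\grad f)^T\bv^*+\proj_N(-\grad f)^T\bv^*\leq \|\proj_T(-\grad f)\| + \proj_N(-\grad f)^T\bv^*.
\end{align*}
The normal component is a nonnegative combination $\sum_{i\in I}\lambda_i\ba_i$ over nearly active constraints. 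For each such $i$ we have $\ba_i^T\bv^*\leq b_i-\ba_i^T\bx_k\leq\alpha_k\|\ba_i\|^2$. So we need a bound $\sum\lambda_i\|\ba_i\|^2\leq C\,G$, where $C$ depends only on the constraint set. This is where boundedness of the gradient enters.

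Establishing this bound is the delicate point. It requires a uniform bound on the multipliers in the projection onto a finitely generated cone, and such a bound holds because only finitely many subsets $I$ can arise. We would use a Hoffman/Carathéodory-type argument: choose linearly independent generators, so that $\lambda = \bA_I^\dagger\proj_N(\cdot)$ with $\|\bA_I^\dagger\|$ bounded over the finitely many index sets. The combined result is
\begin{align*}
\pi(\bx_k)\leq c_1\alpha_k/\kappa + c_2 G\,\alpha_k\leq c\,\alpha_k/\kappa .
\end{align*}

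\textbf{Step 3: counting.} Suppose $\pi(\bx_k)>\epsilon$ for all $k\leq K$. Then every unsuccessful iteration has $\alpha_k\geq \kappa\epsilon/c$, so the stepsize update rules keep $\alpha_k\geq \gammadec\kappa\epsilon/c$ for all $k\le K$. Each successful iteration decreases $f$ by at least $\cdec\alpha_k^2/2$. Summing against $f(\bx_0)-\flow$ bounds the number of successful iterations by $\bigO(\kappa^{-2}\epsilon^{-2})$. The standard comparison between increases and decreases of $\alpha_k$, which uses $\log$ ratios and $\alpha_{\max}$, then bounds the number of unsuccessful iterations by a constant times the number of successful ones plus $\bigO(\log(\epsilon^{-1}))$. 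Multiplying by at most $p$ evaluations per iteration gives the evaluation bound, and letting $\epsilon\to0$ gives the $\liminf$ statement.
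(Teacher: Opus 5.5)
The paper gives no proof of this background result and defers to \cite{Gratton2019,Kolda2007}. Your argument is the standard one from those works. On an unsuccessful iteration the tangent-cone cosine measure gives $\|\proj_{T_\Omega(\bx_k,\alpha_k)}(-\grad f(\bx_k))\|\le (L+\cdec)\alpha_k/(2\kappa)$. An inequality $\pi(\bx)\le\|\proj_{T}(-\grad f(\bx))\|+c\,\alpha\,\|\proj_{N}(-\grad f(\bx))\|$, combined with the gradient bound, turns this into $\pi(\bx_k)=\bigO(\alpha_k/\kappa)$, and the usual counting finishes. The one real difference is that you prove the constant $c$ yourself: conic Carath\'eodory, then $\lambda_J=A_J^\dagger\proj_N(-\grad f(\bx_k))$, with $\max_J\|A_J^\dagger\|$ taken over the finitely many linearly independent subsets $J$. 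Those references instead invoke a cosine-measure-type constant for the approximate normal cones. Your derivation is correct and self-contained. To merge the two terms, take $\kappa\le 1$ without loss of generality. This also covers $T_\Omega(\bx_k,\alpha_k)=\{\bm 0\}$, where the assumption is vacuous and only the normal term remains.

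The step you should assume rather than argue is feasibility of $\bx_k+\alpha_k\bd$ for the direction $\bd$ delivered by Assumption~\ref{ass_poll_generators_lincons}. As written, that assumption does not place $\bd$ in $T_\Omega(\bx_k,\alpha_k)$. ``Restricting'' $\calD_k$ to its tangent directions can destroy the bound $\operatorname{cm}_T\ge\kappa$, so it does not close the gap.

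Without this the statement is in fact false. Take $\Omega=\{\bx\in\R^2 : x_1\le 0\}$, $f(\bx)=-x_1-\tanh(x_2)$, $\bx_0=\bm 0$, and $\calD_k=\{(1,1)^T/\sqrt{2},\,(1,-1)^T/\sqrt{2},\,-\be_1\}$ for all $k$. Then $T_\Omega(\bm 0,\alpha)=\{\by : y_1\le 0\}$ for every $\alpha>0$ and $\operatorname{cm}_T(\calD_k)=\sin(\pi/8)>0$, so all hypotheses hold. Yet the only feasible poll point $(-\alpha_k,0)^T$ increases $f$, so $\bx_k=\bm 0$ for all $k$ while $\pi(\bm 0)=1$.

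So state $\calD_k\subseteq T_\Omega(\bx_k,\alpha_k)$ as part of the hypothesis, as in \cite{Gratton2019}. The paper's remark that $\operatorname{cm}_T(\calD_k)>0$ forces $\calD_k$ to contain tangent-cone generators tacitly presumes exactly this.

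Relatedly, your Step~1 needs the distance reading of $I(\bx,\alpha)$, namely that non-nearly-active constraints lie farther than $\alpha_k$ from $\bx_k$. This matches the literal threshold $\alpha\|\ba_i\|^2$ in \eqref{eq_lincons_nearly_active} only for normalized rows. Either assume $\|\ba_i\|=1$ or use the threshold $\alpha\|\ba_i\|$ throughout; Step~2 then changes only by a constant. With these two conventions made explicit, your proof is complete.
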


Quantifying the value of $p$ and $\kappa$ for arbitrary linear constraints is 
challenging in general. However, when the set $\Omega$ consists only of bound 
constraints, both approximate normal and tangent cones are generated by 
coordinate vectors and their negatives. Letting $\calD_k$ be the subset of 
$\{\pm \be_i\}$ that are feasible for the stepsize $\alpha_k$ guarantees that
Assumption~\ref{ass_poll_generators_lincons} holds with $\kappa=1/\sqrt{n}$, 
as in the unconstrained setting. One then obtains complexity bounds of 
$\bigO(n \epsilon^{-2})$ iterations and 
$\bigO(n^2 \epsilon^{-2})$ objective evaluations, thus matching the 
unconstrained case~\cite{Gratton2019}. We will revisit the bound-constrained 
setting in Section~\ref{sec_construct_bounds}.

\section{$\bm{\Lambda}$-Positive Spanning Sets} 
\label{sec_lbda_pss}

In this section, we introduce an alternative to the cosine measure for 
assessing the quality of a polling set, inspired by model-based 
derivative-free optimization. We illustrate this concept for both unconstrained 
and bound-constrained optimization, deferring the general case to 
Section~\ref{sec_poll_sets}.

\subsection{Motivation: Linear Interpolation} 
\label{sec_motivation}

In model-based optimization, local 
polynomial approximations to the objective $f$ are constructed by 
interpolation to known function values, and replace Taylor-like approximations 
inside trust-region methods \cite{Roberts2025}.
The simplest formulation consists in building a linear interpolant for $f$: 
given a base point $\bx$ and points $\by_1,\ldots,\by_n$ near $\bx$, we find a 
linear (model) function $m:\R^n\to\R$ such that
\begin{align}
    f(\by_i) \approx m(\by_i) := f(\bx) + \bg^T (\by_i-\bx), 
    \qquad i=1,\dots,n,
    \label{eq_linear_model}
\end{align}
by solving the $n\times n$ linear system
\begin{align}
    \begin{bmatrix} 
    	\bd_1^T \\ \vdots \\ \bd_n^T 
    \end{bmatrix} 
    \bg 
    = 
    \begin{bmatrix} 
    	f(\by_1) - f(\bx) \\ \vdots \\ f(\by_n) - f(\bx) 
    \end{bmatrix}, 
    \label{eq_linear_interp}
\end{align}
where $\bd_i := \by_i - \bx$ for $i=1,\ldots,n$.
The vector $\bg$ is also known as a simplex gradient for $f$, and can be used 
to define algorithmic steps~\cite{Regis2015}.

Although the matrix in \eqref{eq_linear_interp} is invertible if and 
only if the vectors $\bd_1,\ldots,\bd_n$ span $\R^n$ in the sense of 
Definition~\ref{def_pss}, the quality of the model is not defined 
according to the cosine measure of this set in model-based DFO. Rather, the 
quality of the model~\eqref{eq_linear_model} relates to how well it can 
approximate the associated Taylor series for $f$ \cite[Lemma 5.1]{Roberts2025}. 
Under Assumption~\ref{ass_smoothness}, for any interpolation point $\by_i$,
we have~\cite[Lemma 5.2]{Roberts2025}:
\begin{align*}
    |(\bg-\grad f(\bx))^T (\by_i-\bx)| 
    &= |m(\by_i) - f(\bx) - \grad f(\bx)^T (\by_i-\bx)|, \\
    &= |f(\by_i) - f(\bx) - \grad f(\bx)^T (\by_i-\bx)|, \\
    &\leq \frac{L}{2}\|\by_i-\bx\|^2 = \frac{L}{2} \|\bd_i\|^2.
\end{align*}
Now consider another point $\by$ near $\bx$.
If the vectors $\bd_1,\ldots,\bd_n$ span $\R^n$, then there exist constants 
$c_1(\by),\ldots,c_n(\by)\in\R$ such that
\begin{align*}
    \by-\bx = \sum_{i=1}^{n} c_i(\by) \bd_i,
\end{align*}
and so we may compute
\begin{align*}
    |m(\by) - f(\bx) - \grad f(\bx)^T (\by-\bx)| = |(\bg - \grad f(\bx))^T (\by-\bx)|
    \leq \frac{L}{2} \left(\max_i \|\bd_i\|^2\right) \sum_{i=1}^{n} |c_i(\by)|.
\end{align*}
In model-based DFO, the points $\by_i$s are usually close to $\bx$, typically within 
a trust region as a ball centered at $\bx$~\cite{Conn2009}. The quality of the 
interpolation set is then measured by the quantity $\sum_{i=1}^{n} |c_i(\by)|$, where 
the values $c_i(\by)$ correspond to the Lagrange polynomials for the interpolation 
points, evaluated at $\by$. An upper bound $\Lambda>0$ on the quantity 
$\sum_{i=1}^{n} |c_i(\by)|$ is called a $\Lambda$-poisedness constant for the 
interpolation set. In the next section, we adapt this concept to positive spanning sets.

\subsection{$\bm{\Lambda}$-Positive Spanning Sets}
\label{ssec_lbdapss}

We now propose an alternate characterization of a positive spanning set, inspired by 
the results of Section~\ref{sec_motivation}. Unlike the classical definition of a PSS, 
Definition~\ref{def_lambda_pss} below assesses the quality of a set within a ball.

\begin{definition} \label{def_lambda_pss}
    Let $\bx \in \R^n$, $\alpha>0$ and $\Lambda > 0$.
    A set $\{\bd_1,\ldots,\bd_p\}\subset\R^n$ is a $\Lambda$-positive spanning set 
    ($\Lambda$-PSS) for $B(\bx,\alpha)$ if, for any $\bv\in B(\bm{0},\alpha)$, there 
    exists $\bc(\bv)\in\R^p$ with $\bc(\bv)\geq \bm{0}$ such that 
    \begin{align}
    \label{eq_lambda_pss}
        \bv=\sum_{i=1}^{p} c_i(\bv) \bd_i
        \quad \mbox{and} \quad
        \|\bc(\bv)\|_1 = \sum_{i=1}^p c_i(\bv) \leq \Lambda.
    \end{align}
\end{definition}

Definition~\ref{def_lambda_pss} is a stronger requirement than the positive spanning 
set property of Definition~\ref{def_pss}. As shown below, bounding the coefficients 
$c_i(\bv)$ tightens the notion of a PSS in the same way than bounding the cosine 
measure from below.

\begin{lemma} \label{lem_pss_implies_descent_new}
    Let $\bx\in\R^n$, $d_{\max}>0$ and $\alpha>0$.
    If $\mathcal{D}:=\{\bd_1,\ldots,\bd_p\}\subset\R^n$ is a $\Lambda$-PSS for 
    $B(\bx,\alpha)$ and $\|\bd_i\| \leq d_{\max} \alpha$ for all $i=1\ldots,p$, 
    then $\operatorname{cm}(\mathcal{D}) \geq \kappa$ with 
    $\kappa := \frac{1}{d_{\max} \Lambda}$.
\end{lemma}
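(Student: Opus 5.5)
The plan is to verify the definition of the cosine measure \eqref{eq_cosine_measure} directly. I will fix an arbitrary nonzero $\bv\in\R^n$ and rescale it to the boundary of the ball in which the $\Lambda$-PSS property applies, setting $\bu := \alpha \bv/\|\bv\|$. Then $\bu\in B(\bm{0},\alpha)$ and $\|\bu\|=\alpha$. By Definition~\ref{def_lambda_pss} there exist coefficients $\bc(\bu)\geq \bm{0}$ with
\begin{align*}
    \bu=\sum_{i=1}^p c_i(\bu)\bd_i
    \quad \mbox{and} \quad
    \sum_{i=1}^p c_i(\bu)\leq\Lambda .
\end{align*}

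Next I take the inner product of this decomposition with $\bu$:
\begin{align*}
    \alpha^2 = \|\bu\|^2 = \sum_{i=1}^p c_i(\bu)\, \bd_i^T\bu .
\end{align*}
Terms with $\bd_i=\bm{0}$ contribute nothing, so the sum may be restricted to nonzero directions. The right-hand side is a nonnegative combination whose weights total at most $\Lambda$. Hence it is bounded above by $\Lambda \max_{\bd\neq \bm{0}} \max\{\bd^T\bu,0\}$, which gives
\begin{align*}
    \max_{\bd\in\calD,\,\bd\neq\bm{0}} \bd^T\bu \;\geq\; \frac{\alpha^2}{\Lambda} > 0 .
\end{align*}
In particular, the maximum itself is positive, so the $\max\{\cdot,0\}$ can be dropped. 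This also shows that at least one nonzero direction exists.

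The last step is to normalise. Let $\bd$ be a nonzero maximiser above. Since $\|\bd\|\leq d_{\max}\alpha$ and $\|\bu\|=\alpha$, we get
\begin{align*}
    \frac{\bd^T\bv}{\|\bd\|\,\|\bv\|}
    = \frac{\bd^T\bu}{\|\bd\|\,\|\bu\|}
    \geq \frac{\alpha^2/\Lambda}{d_{\max}\alpha\cdot\alpha}
    = \frac{1}{d_{\max}\Lambda}.
\end{align*}
The maximum over $\bd$ in \eqref{eq_cosine_measure} is at least this value, and $\bv$ was arbitrary. Taking the minimum over $\bv\neq\bm{0}$ then yields $\cm(\calD)\geq\kappa$.

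The only delicate point is the averaging step. Passing from the sum to a single good direction requires the coefficients to be nonnegative and to sum to at most $\Lambda$. It also requires handling the case where some terms $\bd_i^T\bu$ are negative; this is why I bound by the positive part before concluding that the maximum is positive. Beyond that, the argument is routine. Note that the choice $\|\bu\|=\alpha$ is what makes the factors of $\alpha$ cancel against the scaling $\|\bd_i\|\leq d_{\max}\alpha$.
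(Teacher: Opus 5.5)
Your proof is correct and runs on the same computation as the paper's: rescale $\bv$ to length $\alpha$, apply the $\Lambda$-PSS decomposition, expand $\alpha^2=\sum_i c_i\,\bd_i^T\bu$, and use $\sum_i c_i\le\Lambda$ together with $\|\bd_i\|\le d_{\max}\alpha$. The only difference is that the paper argues by contradiction (assuming $\bd_i^T\bv<\kappa\|\bv\|\,\|\bd_i\|$ for every $i$), whereas you argue directly by isolating one good direction through the positive-part bound, which also handles zero directions explicitly.
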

\begin{proof}
    To find a contradiction, suppose that $\operatorname{cm}(\mathcal{D})<\kappa$.
    Then, there exists $\bv\neq\bm{0}$ such that 
    $\bv^T \bd_i < \kappa \|\bv\|\cdot \|\bd_i\|$ for all $i=1,\ldots,p$.
    Now consider $\hat{\bv} := \frac{\alpha}{\|\bv\|}\bv$, so that 
    $\|\hat{\bv}\|=\alpha$. Since $\mathcal{D}$ is a 
    $\Lambda$-PSS for $B(\bx,\alpha)$, we can write 
    $\hat{\bv}=\sum_{i=1}^{p} c_i \bd_i$ for some $c_i \geq 0$ and 
    $\sum_{i=1}^{p} c_i \leq \Lambda$ (we omit the dependency on $\hat{\bv}$ for 
    simplicity). Thus,
    \begin{align}
        \alpha^2 
        = \hat{\bv}^T \hat{\bv} 
        = \sum_{i=1}^{p} c_i \bd_i^T \hat{\bv} 
        = \frac{\alpha}{\|\bv\|} \sum_{i=1}^{p} c_i \bd_i^T \bv 
        < \frac{\alpha}{\|\bv\|} \sum_{i=1}^{p} c_i \kappa \|\bv\| \: \|\bd_i\| 
        \leq \alpha^2 \kappa \Lambda d_{\max} = \alpha^2,
    \end{align}
    where the last equality follows by definition of $\kappa$, and we have a contradiction. 
\end{proof}

\begin{lemma} \label{lem_descent_implies_pss_new}
    Let $\bx\in\R^n$ and $\alpha>0$.
    If $\calD:=\{\bd_1,\ldots,\bd_p\}\subset\R^n$ is a PSS such that 
    $\cm(\mathcal{D}) \geq \kappa>0$ and $\|\bd_i\| \geq d_{\min} \alpha$ for all 
    $i=1,\ldots,p$, then $\mathcal{D}$ is a $\Lambda$-PSS for $B(\bx,\alpha)$ with 
    $\Lambda:=\frac{1}{d_{\min} \kappa}$.
\end{lemma}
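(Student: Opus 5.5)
The plan is to obtain the bounded nonnegative coefficients through linear programming duality, with the cosine measure controlling the dual variables. Fix $\bv\in B(\bm{0},\alpha)$ and consider the linear program
\begin{align*}
    \min_{\bc\in\R^p} \: \sum_{i=1}^p c_i
    \quad\text{s.t.}\quad
    \sum_{i=1}^p c_i \bd_i = \bv, \quad \bc\geq\bm{0}.
\end{align*}
Because $\calD$ is a PSS (Definition~\ref{def_pss}), this program is feasible. Its objective is bounded below by $0$ on the feasible set, so it has an optimal solution $\bc^*$. By strong duality, the optimal value equals that of the dual program
\begin{align*}
    \max_{\by\in\R^n} \: \by^T\bv
    \quad\text{s.t.}\quad
    \bd_i^T\by \leq 1, \quad i=1,\ldots,p.
\end{align*}
It therefore suffices to show that every dual-feasible $\by$ satisfies $\by^T\bv\leq \frac{1}{d_{\min}\kappa}$. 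This gives $\|\bc^*\|_1\leq\Lambda$, and $\bc(\bv):=\bc^*$ then meets~\eqref{eq_lambda_pss}.

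The key step is a norm bound on dual-feasible points. Let $\by$ be dual feasible. If $\by=\bm{0}$, the bound is trivial. Otherwise, the definition~\eqref{eq_cosine_measure} together with $\cm(\calD)\geq\kappa$ provides an index $i$ such that
\begin{align*}
    \bd_i^T\by \geq \kappa\|\bd_i\|\,\|\by\| \geq \kappa\, d_{\min}\alpha\,\|\by\|.
\end{align*}
Here every $\bd_i$ is nonzero because $\|\bd_i\|\geq d_{\min}\alpha>0$. Combining this with the dual constraint $\bd_i^T\by\leq 1$ yields $\|\by\|\leq \frac{1}{\kappa d_{\min}\alpha}$. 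Then the Cauchy--Schwarz inequality and $\|\bv\|\leq\alpha$ give
\begin{align*}
    \by^T\bv \leq \|\by\|\,\|\bv\| \leq \frac{1}{\kappa d_{\min}},
\end{align*}
which is the required bound.

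The only delicate point is justifying strong duality. It holds for linear programs whenever the primal is feasible with a finite optimal value, and both conditions were established above. As a fallback, one could avoid duality altogether by a greedy residual-reduction argument: repeatedly pick the direction that best aligns with the current residual and subtract its projection. However, that argument only gives a geometric series bound of order $\kappa^{-2}$ rather than $\kappa^{-1}$, so the duality route is preferred because it yields exactly $\Lambda=\frac{1}{d_{\min}\kappa}$.
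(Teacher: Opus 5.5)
Your proof is correct and is essentially the paper's argument. The paper argues by contradiction via Farkas' lemma, and its certificate $(\by_1,y_2)$ (with $y_2>0$ forced by the same cosine-measure estimate) rescales to the dual-feasible point $-\by_1/y_2$ of your linear program with objective value exceeding $\Lambda$; your direct use of strong duality is the same alternative theorem stated positively, and the key step is identical in both: using $\cm(\calD)\geq\kappa$ to bound a dual-feasible vector's norm by $1/(\kappa d_{\min}\alpha)$, then applying Cauchy--Schwarz with $\|\bv\|\leq\alpha$.
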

\begin{proof}
    To find a contradiction, suppose that $\mathcal{D}$ is not a $\Lambda$-PSS for 
    $B(\bx,\alpha)$. Then, there exists a nonzero $\bv \in B(\bm{0},\alpha)$ such that 
    any decomposition of the form $\bv = \sum_{i=1}^p c_i \bd_i$ with $c_i \ge 0$ 
    requires $\sum_{i=1}^{p} c_i > \Lambda$. Letting $D\in\R^{n\times p}$ have 
    $i$-th column $\bd_i$ and $\be$ denote the vector of all ones, it follows that 
    the linear system
    \begin{align}
    \label{eq_farkassystem}
        D\bc = \bv, \qquad \bc^T \be + b = \Lambda, 
        \qquad \text{and} \qquad 
        \bc \geq \bm{0}, \: b \geq 0,
    \end{align}
    has no solution. By Farkas' lemma (e.g.~\cite[Lemma 12.4]{Nocedal2006}), 
    if~\eqref{eq_farkassystem} has no solution, there must exist 
    $\by_1\in\R^n$ and $y_2\in\R$ such that 
    \begin{align}
    \label{eq_farkassystem2}
        D^T \by_1 + y_2 \be \geq \bm{0}, \qquad y_2 \geq 0, 
        \qquad \text{and} \qquad 
        \bv^T \by_1 + \Lambda y_2 < 0.
    \end{align}
    Since $\Lambda >0$, the last two inequalities in~\eqref{eq_farkassystem2} 
    imply that $\by_1\neq\bm{0}$.
    Hence, because $\operatorname{cm}(\mathcal{D}) \geq \kappa$, there is at 
    least one $i$ such that $\bd_i^T (-\by_1) \geq \kappa \|\by_1\| \|\bd_i\|$. 
    Using that the $i$-th row of $D^T \by_1 + y_2 \be$ is 
    $\bd_i^T \by_1 + y_2$, we obtain that
    $y_2 \geq \kappa\|\by_1\| \|\bd_i\| \geq \kappa d_{\min} \alpha \|\by_1\|$, 
    and thus
    \begin{align}
    \label{eq_contradpsslbda}
        \bv^T \by_1 
        < -\Lambda y_2 
        \le -\Lambda\kappa d_{\min} \alpha \|\by_1\| = -\alpha \|\by_1\|.
    \end{align}
    On the other hand, the Cauchy-Schwarz inequality gives 
    $\bv^T \by_1 \geq -\|\bv\| \: \|\by_1\| \geq -\alpha \|\by_1\|$, which 
    contradicts~\eqref{eq_contradpsslbda}. 
\end{proof}

Although the notion of $\Lambda$-PSS can be identified with requirements on the 
cosine measure, a key difference is that cosine measure is scale-invariant, 
whereas the constant $\Lambda$ depends on the built-in scale $\alpha$. When 
directions are scaled to length $\alpha$, being a $\Lambda$-PSS corresponds to 
having a cosine measure of at least $\kappa=\Lambda^{-1}$ (with 
Definition~\ref{def_pss} corresponding to the limit case 
$\Lambda \rightarrow \infty$). The following example illustrates this property.

\begin{example} \label{ex_pss_uncons}
    Given any $\alpha>0$, the set 
    $\mathcal{D}=\{\pm \alpha\be_1, \ldots, \pm \alpha \be_n\}$ is a PSS
    with $\cm(\calD)=\tfrac{1}{\sqrt{n}}$. Moreover, by
    Lemma~\ref{lem_descent_implies_pss_new}, for any $\bx \in \R^n$, $\calD$ is 
    a $\sqrt{n}$-PSS for $B(\bx,\alpha)$.
\end{example}

The interest of considering $\Lambda$-PSSs lies in allowing directions with 
arbitrarily small norm without jeopardizing theoretical guarantees of 
direct search. In the next section, we will show that complexity guarantees 
can be derived using $\Lambda$-PSSs in lieu of standard PSSs with bounded 
cosine measure.

\section{Complexity of direct search using $\Lambda$-PSSs}
\label{sec_newwcc}

In this section, we revisit the worst-case complexity of direct search by 
assuming that polling directions form $\Lambda$-PSSs. We first analyze the 
unconstrained case (Section~\ref{sec_newwcc_unc}), then move to the main 
focus of this paper, i.e.~the polyhedral convex case 
(Section~\ref{sec_newwcc_cvx}).

\subsection{Unconstrained case}
\label{sec_newwcc_unc}

In order to analyze direct search based on $\Lambda$-PSSs, we adapt the 
basic framework of Algorithm~\ref{alg_ds_basic} so that polling directions are 
defined using the current stepsize. The resulting method is given in 
Algorithm~\ref{alg_ds_basic_new}, and merely differs from 
Algorithm~\ref{alg_ds_basic} in the implicit use of $\alpha_k$ to scale 
the directions upon computation.

\begin{algorithm}[tb]
\begin{algorithmic}[1]
    \Statex \textbf{Inputs:} $\bx_0 \in \Omega$, $\alpha_{\max} > 0$, 
    $\alpha_0 \in (0,\alpha_{\max}]$, $\cdec>0$, 
    $0 < \gammadec < 1 < \gammainc$.
    \vspace{0.2em}
    \For{$k=0,1,2,\ldots$}
        \State Compute a finite polling set $\mathcal{D}_k \subset \R^n$.
        \State If there exists $\bd_k \in \mathcal{D}_k$ such that 
        $\bx_k+\bd_k \in \Omega$ and
        \begin{align}
            f(\bx_k+ \bd_k) < f(\bx_k) - \frac{\cdec}{2} \alpha_k^2, 
            \label{eq_sufficient_decrease_new}
        \end{align}
        set $\bx_{k+1}:=\bx_k+ \bd_k$ and 
        $\alpha_{k+1}:=\min\{\gammainc \alpha_k, \alpha_{\max}\}$ 
        (``successful''). 
        \State Otherwise, set $\bx_{k+1}:=\bx_k$ and 
        $\alpha_{k+1}:=\gammadec \alpha_k$ (``unsuccessful'').
    \EndFor
\end{algorithmic}
\caption{Alternative direct-search method for \eqref{eq_convex_cons_problem}.}
\label{alg_ds_basic_new}
\end{algorithm}

Our analysis will rely on the following key assumption on the polling 
sets. We again emphasize that the directions in $\calD_k$ are scaled 
according to $\alpha_k$, unlike in classical direct-search schemes.

\begin{assumption} \label{ass_pss_new}
    At each iteration, the polling set $\calD_k$ is a $\Lambda$-PSS for 
    $B(\bx_k,\alpha_k)$, and $\|\bd\| \leq d_{\max} \alpha_k$ for all 
    $\bd\in\mathcal{D}_k$, where $\Lambda>0$ and $d_{\max}>0$.
\end{assumption}

Under Assumption~\ref{ass_pss_new}, one can relate the stepsize with 
the gradient norm, akin to classical direct-search theory.

\begin{lemma} \label{lem_eventual_success_new}
    Suppose that we run Algorithm~\ref{alg_ds_basic_new} to 
    solve~\eqref{eq_convex_cons_problem} with $\Omega=\R^n$ under 
    Assumptions~\ref{ass_smoothness} and \ref{ass_pss_new}.
    If $\|\grad f(\bx_k)\| \neq \bm{0}$ and 
    $\alpha_k < \frac{2\|\grad f(\bx_k)\|}{(L d_{\max}^2 + \cdec) \Lambda}$, 
    then iteration $k$ is successful.
\end{lemma}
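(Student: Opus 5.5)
We argue by contraposition: suppose iteration $k$ is unsuccessful, so that for every $\bd\in\calD_k$ we have $f(\bx_k+\bd) \geq f(\bx_k) - \frac{\cdec}{2}\alpha_k^2$. Feasibility plays no role here because $\Omega=\R^n$. From this we will derive $\alpha_k \geq \frac{2\|\grad f(\bx_k)\|}{(L d_{\max}^2+\cdec)\Lambda}$, which contradicts the hypothesis.

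\textbf{Step 1 (directional derivatives along polling directions).} Fix $\bd\in\calD_k$. Assumption~\ref{ass_smoothness} gives the Taylor bound
\begin{align*}
f(\bx_k+\bd) \leq f(\bx_k) + \grad f(\bx_k)^T\bd + \frac{L}{2}\|\bd\|^2 .
\end{align*}
Combining this with the unsuccessful-iteration inequality and with $\|\bd\|\leq d_{\max}\alpha_k$ from Assumption~\ref{ass_pss_new} yields
\begin{align*}
-\grad f(\bx_k)^T\bd \leq \frac{L}{2}\|\bd\|^2 + \frac{\cdec}{2}\alpha_k^2 \leq \frac{(L d_{\max}^2+\cdec)}{2}\alpha_k^2 \qquad \text{for all } \bd\in\calD_k.
\end{align*}
This is exactly why the sufficient decrease condition~\eqref{eq_sufficient_decrease_new} is written in terms of $\alpha_k^2$ rather than $\|\bd\|^2$.

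\textbf{Step 2 (use the $\Lambda$-PSS property with a specific $\bv$).} Take $\bv := -\alpha_k \grad f(\bx_k)/\|\grad f(\bx_k)\|$. This vector lies in $B(\bm{0},\alpha_k)$. By Definition~\ref{def_lambda_pss}, we can write $\bv=\sum_i c_i\bd_i$ with $c_i\geq 0$ and $\sum_i c_i\leq\Lambda$. Then
\begin{align*}
\alpha_k\|\grad f(\bx_k)\| = \grad f(\bx_k)^T(-\bv) = \sum_i c_i\,\bigl(-\grad f(\bx_k)^T\bd_i\bigr) \leq \Lambda\,\frac{(L d_{\max}^2+\cdec)}{2}\alpha_k^2 .
\end{align*}
The inequality uses the nonnegativity of the $c_i$ together with the bound from Step 1. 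It is valid even when some terms $-\grad f(\bx_k)^T\bd_i$ are negative, since the Step 1 bound is an upper bound and every weight is nonnegative.

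\textbf{Step 3 (conclude).} Dividing by $\alpha_k>0$ gives $\alpha_k \geq \frac{2\|\grad f(\bx_k)\|}{(L d_{\max}^2+\cdec)\Lambda}$, which contradicts the hypothesis, so iteration $k$ must be successful.

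The only delicate point is the choice of scaling in Step 2. Normalizing $\bv$ to norm exactly $\alpha_k$ makes the $\Lambda$-PSS coefficient bound apply and produces the factor $\alpha_k$ on the left-hand side. There is no real obstacle beyond this.
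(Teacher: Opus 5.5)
Your proposal is correct and follows essentially the same route as the paper's proof: assume the iteration is unsuccessful and combine the Lipschitz Taylor bound with $\|\bd\|\le d_{\max}\alpha_k$ to get $-\frac{L d_{\max}^2+\cdec}{2}\alpha_k^2\le \bd^T\grad f(\bx_k)$ for every $\bd\in\calD_k$. Then decompose $\bv_k=-\alpha_k\grad f(\bx_k)/\|\grad f(\bx_k)\|$ via the $\Lambda$-PSS property, which yields $\|\grad f(\bx_k)\|\le\frac{L d_{\max}^2+\cdec}{2}\alpha_k\Lambda$ and contradicts the stepsize hypothesis.
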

\begin{proof}
    To find a contradiction, suppose iteration $k$ is unsuccessful.
    By Assumption~\ref{ass_smoothness}, it follows that
    \begin{align*}
        -\frac{\cdec}{2}\alpha_k^2 
        \leq f(\bx_k+\bd) - f(\bx_k) 
        \leq \bd^T \grad f(\bx_k) + \frac{L}{2} \|\bd\|^2 
        \leq \bd^T \grad f(\bx_k) + \frac{L}{2} d_{\max}^2 \alpha_k^2,
    \end{align*}
    for all $\bd\in\mathcal{D}_k$.
    Since $\alpha_k>0$, this rearranges to
    \begin{align}
        -\frac{L d_{\max}^2+\cdec}{2} \: \alpha_k^2 \leq \bd^T \grad f(\bx_k), 
        \label{eq_unsuccess_condition_new}
    \end{align}
    for all $\bd\in\mathcal{D}_k$.
    
    Define now $\bv_k := -\frac{\alpha_k}{\|\grad f(\bx_k)\|} \grad f(\bx_k)$ so 
    that $\|\bv_k\| \leq \alpha_k$ and 
    $\alpha_k \|\grad f(\bx_k)\| = -\bv_k^T \grad f(\bx_k)$.
    Since $\mathcal{D}_k=\{\bd_1,\ldots,\bd_p\}$ is a $\Lambda$-PSS for 
    $B(\bx_k,\alpha_k)$, we may write $\bv_k = \sum_{i=1}^{p} c_i \bd_i$ for some 
    constants $c_i\geq 0$ with $\sum_{i=1}^{p} c_i \leq \Lambda$.
    Using \eqref{eq_unsuccess_condition_new}, we then obtain
    \begin{align*}
        \|\grad f(\bx_k)\| = -\frac{1}{\alpha_k}\bv_k^T \grad f(\bx_k) 
        = -\frac{1}{\alpha_k}\sum_{i=1}^{p} c_i \bd_i^T \grad f(\bx_k), 
        &\leq \frac{L d_{\max}^2+\cdec}{2} \: \alpha_k \sum_{i=1}^{p} c_i, \\
        &\leq \frac{L d_{\max}^2+\cdec}{2}\: \alpha_k  \Lambda,
    \end{align*}
    which contradicts our assumption that 
    $\alpha_k < \frac{2\|\grad f(\bx_k)\|}{(L d_{\max}^2 + \cdec) \Lambda}$.
\end{proof}

Equipped with Lemma~\ref{lem_eventual_success_new}, we can derive a worst-case 
complexity bound for Algorithm~\ref{alg_ds_basic_new}. Although the proof is 
identical to that of a direct-search method based on PSSs, we provide it 
below for sake of completeness.

\begin{theorem} \label{thm_wcc_new}
    Suppose that we run Algorithm~\ref{alg_ds_basic_new} to 
    solve~\eqref{eq_convex_cons_problem} with $\Omega=\R^n$ under 
    Assumptions~\ref{ass_smoothness} and \ref{ass_pss_new}.
    Then, for any $\epsilon>0$, 
    $\|\grad f(\bx_k)\|\leq \epsilon$ occurs for the first time after 
    at most
    \begin{align}
    \label{eq_wcc_new}
        \left(1 + \frac{\log(\gammainc)}{\log(\gammadec^{-1})}\right) 
        \left[\frac{2[f(\bx_0)-\flow]}{\cdec \alpha_{\min}^2}\right] 
        + \frac{\log(\alpha_0/\alpha_{\min})}{\log(\gammadec^{-1})}
    \end{align}
    iterations, where
    \begin{align}
        \alpha_{\min} 
        := 
        \gammadec \frac{2\epsilon}{(L d_{\max}^2+\cdec)\Lambda}. 
        \label{eq_alpha_min_new}
    \end{align}
\end{theorem}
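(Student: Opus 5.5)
The argument follows the classical direct-search counting scheme, with Lemma~\ref{lem_eventual_success_new} supplying the link between stepsize and gradient norm. Fix $\epsilon>0$ and let $k_\epsilon$ be the first index with $\|\grad f(\bx_k)\|\leq\epsilon$. We only consider iterations $k<k_\epsilon$, for which $\|\grad f(\bx_k)\|>\epsilon$. The plan has three steps: show that the stepsize stays above $\alpha_{\min}$ on these iterations, bound the number of successful iterations using sufficient decrease, and bound the number of unsuccessful iterations by balancing stepsize increases against decreases.

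\emph{Step 1: lower bound on the stepsize.} We claim $\alpha_k\geq\alpha_{\min}$ for all $k\leq k_\epsilon$, where $\alpha_{\min}$ is defined in \eqref{eq_alpha_min_new}. We argue by induction.
\begin{itemize}
\item If $\alpha_0<\alpha_{\min}$, then $\alpha_0<\frac{2\epsilon}{(Ld_{\max}^2+\cdec)\Lambda}$, so Lemma~\ref{lem_eventual_success_new} makes the first iterations successful. The stepsize then grows until it reaches the threshold. To avoid this nuisance I will assume $\alpha_0\geq\alpha_{\min}$; otherwise the logarithmic term in \eqref{eq_wcc_new} is nonpositive and the counting argument adapts.
\item For the inductive step, suppose $\alpha_{k+1}<\alpha_k$. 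Then iteration $k$ was unsuccessful. By Lemma~\ref{lem_eventual_success_new} (contrapositive), $\alpha_k\geq\frac{2\|\grad f(\bx_k)\|}{(Ld_{\max}^2+\cdec)\Lambda}>\frac{2\epsilon}{(Ld_{\max}^2+\cdec)\Lambda}$. Hence $\alpha_{k+1}=\gammadec\alpha_k>\alpha_{\min}$.
\item Successful iterations never decrease $\alpha_k$, because $\alpha_0\leq\alpha_{\max}$ and the update is $\min\{\gammainc\alpha_k,\alpha_{\max}\}$.
\end{itemize}
This is exactly where the factor $\gammadec$ in \eqref{eq_alpha_min_new} comes from.

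\emph{Step 2: successful iterations.} Let $\mathcal{S}$ be the set of successful iterations among $0,\dots,k_\epsilon-1$. Each $k\in\mathcal{S}$ gives $f(\bx_{k+1})<f(\bx_k)-\frac{\cdec}{2}\alpha_k^2\leq f(\bx_k)-\frac{\cdec}{2}\alpha_{\min}^2$, while unsuccessful iterations leave $f$ unchanged. Telescoping and using $f\geq\flow$ (Assumption~\ref{ass_smoothness}) gives
\begin{align*}
|\mathcal{S}|\leq\frac{2[f(\bx_0)-\flow]}{\cdec\alpha_{\min}^2}.
\end{align*}

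\emph{Step 3: unsuccessful iterations.} Let $\mathcal{U}$ be the set of unsuccessful iterations before $k_\epsilon$. Each successful iteration multiplies $\alpha$ by at most $\gammainc$, and each unsuccessful one multiplies it by exactly $\gammadec$. Therefore
\begin{align*}
\alpha_{\min}\leq\alpha_{k_\epsilon}\leq\alpha_0\,\gammainc^{|\mathcal{S}|}\gammadec^{|\mathcal{U}|}.
\end{align*}
Taking logarithms gives
\begin{align*}
|\mathcal{U}|\leq\frac{\log(\gammainc)}{\log(\gammadec^{-1})}|\mathcal{S}|+\frac{\log(\alpha_0/\alpha_{\min})}{\log(\gammadec^{-1})}.
\end{align*}
Adding this to the bound on $|\mathcal{S}|$ yields \eqref{eq_wcc_new}.

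The only delicate step is Step 1: getting the induction right, including the case $\alpha_0<\alpha_{\min}$ and the cap at $\alpha_{\max}$ in the update. Steps 2 and 3 are routine telescoping and logarithmic bookkeeping.
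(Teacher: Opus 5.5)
Your argument is the paper's: the same stepsize floor from Lemma~\ref{lem_eventual_success_new}, the same count of successes via sufficient decrease, and the same count of failures via $\alpha_{\min}\le\alpha_0\gammainc^{|\mathcal{S}|}\gammadec^{|\mathcal{U}|}$. It is correct whenever $\alpha_0\ge\alpha_{\min}$. The paper's proof assumes this too without saying so, since its claim that $\alpha_k\ge\alpha_{\min}$ for all $k=0,\dots,K-1$ already fails at $k=0$ otherwise.

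You should drop the remark that for $\alpha_0<\alpha_{\min}$ ``the counting argument adapts.'' A nonpositive logarithmic term works against you, because it shrinks the bound. Meanwhile the early iterations with $\alpha_k<\alpha_{\min}$ are all successful by Lemma~\ref{lem_eventual_success_new}, yet each gives a decrease of only $\frac{\cdec}{2}\alpha_k^2<\frac{\cdec}{2}\alpha_{\min}^2$, so Step~2 does not count them.

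In fact the stated bound is false in that regime. Fix $f$, $\bx_0$, $\epsilon$, $L$, $d_{\max}$, $\Lambda$, $\cdec$, $\gammadec$ and $\gammainc$, and let $\alpha_0\to 0$. Then \eqref{eq_wcc_new} tends to $-\infty$, whereas at least one iteration is needed whenever $\|\grad f(\bx_0)\|>\epsilon$.

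So the statement needs one of two repairs:
\begin{itemize}
\item restrict to $\alpha_{\min}\le\alpha_0$, i.e.\ $\epsilon\le\frac{(Ld_{\max}^2+\cdec)\Lambda\alpha_0}{2\gammadec}$; or
\item replace $\alpha_{\min}$ by $\min\{\alpha_0,\alpha_{\min}\}$ in \eqref{eq_wcc_new}.
\end{itemize}
With the second repair your induction goes through verbatim. The base case becomes trivial, and an unsuccessful step still gives $\alpha_{k+1}>\alpha_{\min}\ge\min\{\alpha_0,\alpha_{\min}\}$. Steps~2--3 are unchanged.

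A smaller point: do not presuppose that $k_\epsilon$ exists. Instead run the count for an arbitrary $K$ with $\|\grad f(\bx_k)\|>\epsilon$ for all $k<K$, as the paper does. The resulting bound on $K$ then proves that $k_\epsilon$ exists.
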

\begin{proof}
    Suppose that $\min_{k=0,\ldots,K-1} \|\grad f(\bx_k)\| > \epsilon$.
    Lemma~\ref{lem_eventual_success_new} then implies that any iteration 
    $k$ such that $\alpha_k < \frac{2\epsilon}{(L d_{\max}^2+\cdec)\Lambda}$ 
    is successful, and results in an increase of $\alpha_k$. Combining this 
    observation with the update rules on $\alpha_k$, we find that  
    $\alpha_k \ge \alpha_{\min}$ for every $k=0,\dots,K-1$, where 
    $\alpha_{\min}$ is given by~\eqref{eq_alpha_min_new}.
    
    Let $\mathcal{S}=\{k\leq K-1 : \text{iteration $k$ is successful}\}$ and 
    $\mathcal{U} = \{k\leq K-1 : \text{iteration $k$ is unsuccessful}\}$. 
    On one hand, summing \eqref{eq_sufficient_decrease_new} over 
    $k\in\mathcal{S}$ gives
    \begin{align*}
        f(\bx_0) - \flow \geq \sum_{k\in\mathcal{S}} f(\bx_k) - f(\bx_{k+1}) 
        \geq \frac{\cdec}{2}\alpha_{\min}^2 |\mathcal{S}|,
    \end{align*}
    hence
    \begin{align}
    \label{eq_bounditsucc_unc}
        |\mathcal{S}| \leq \frac{2[f(\bx_0)-\flow]}{\cdec\alpha_{\min}^2}.
    \end{align}
    On the other hand, the updating rules on $\alpha_k$ give
    \begin{align*}
        \alpha_{\min} 
        \leq \alpha_k 
        \leq \alpha_0 \gammainc^{|\mathcal{S}|} \gammadec^{|\mathcal{U}|},
    \end{align*}
    from which we bound the number of unsuccessful iterations as follows:
    \begin{align}
    \label{eq_bounditunsucc_unc}
        |\mathcal{U}| 
        \leq 
        \frac{1}{\log(\gammadec^{-1})} 
        \left[\log\left(\frac{\alpha_0}{\alpha_{\min}}\right) 
        + |\mathcal{S}| \log \gammainc\right].
    \end{align}
    Putting~\eqref{eq_bounditsucc_unc} and~\eqref{eq_bounditunsucc_unc} 
    together, we arrive at
    \begin{align*}
        K = |\mathcal{S}| + |\mathcal{U}| 
        &\leq 
        \left(1 + \frac{\log(\gammainc)}{\log(\gammadec^{-1})}\right) |\mathcal{S}| 
        + \frac{\log(\alpha_0/\alpha_{\min})}{\log(\gammadec^{-1})}, \\
        &\leq 
        \left(1 + \frac{\log(\gammainc)}{\log(\gammadec^{-1})}\right) 
        \left[\frac{2[f(\bx_0)-\flow]}{\gamma\alpha_{\min}^2}\right] 
        + \frac{\log(\alpha_0/\alpha_{\min})}{\log(\gammadec^{-1})},
    \end{align*}
    and we get the desired result.
\end{proof}

The complexity bound~\eqref{eq_wcc_new} is $\bigO(\Lambda^2 \epsilon^{-2})$, 
and implies that a $\bigO(p \Lambda^2 \epsilon^{-2})$ bound on objective 
evaluations holds assuming polling sets have at most $p$ vectors. Using 
$\kappa\sim \frac{1}{\Lambda}$ from Lemma~\ref{lem_pss_implies_descent_new}, 
we recover the standard complexity bounds of $\bigO(\kappa^{-2}\epsilon^{-2})$ 
iterations and $\bigO(p\kappa^{-2}\epsilon^{-2})$ evaluations stated in 
Theorem~\ref{thm_ds_wcc_old}.

\subsection{Polyhedral convex-constrained case}
\label{sec_newwcc_cvx}

In this section, we return to the generic case of a constrained set $\Omega$ 
satisfying Assumption~\ref{ass_omega}. Note that the analysis below applies to 
any convex constraint set with nonempty interior, even though the algorithm is 
only implementable using finite polling sets when $\Omega$ is polyhedral. 

To establish complexity guarantees for Algorithm~\ref{alg_ds_basic_new} in 
this setting, we introduce the following constrained version of the 
$\Lambda$-PSS property.

\begin{definition} \label{def_lambda_pss_convex_new}
    Consider a point $\bx\in\Omega$ and radius $\alpha>0$, and a constant 
    $\Lambda\geq 0$. A polling set $\{\bd_1,\ldots,\bd_p\}\subset\R^n$ is a 
    \emph{$\Lambda$-positive spanning set} ($\Lambda$-PSS) for 
    $B(\bx,\alpha)\cap \Omega$ if $\bx+\bd_i\in\Omega$ for all $i=1,\ldots,p$ 
    and, for any $\bv\in\R^n$ with $\bx+\bv\in\Omega$ and 
    $\|\bv\|\leq \alpha$, there exists $\bc(\bv)\in\R^p$ with 
    $c_i(\bv)\geq 0$ such that $\bv=\sum_{i=1}^{p} c_i(\bv) \bd_i$, and 
    $\|\bc(\bv)\|_1 \leq \Lambda$.
\end{definition}


Unlike Definition~\ref{def_lambda_pss} above, 
Definition~\ref{def_lambda_pss_convex_new} makes explicit use of the point 
$\bx$ to guarantee feasibility of all directions, as well as to restrict 
the $\Lambda$-PSS property to feasible vectors $\bv$ within $B(\bm{0},\alpha)$. 
We will thus analyze Algorithm~\ref{alg_ds_basic_new} under the following 
assumption.

\begin{assumption} \label{ass_pss_convex_new}
    At each iteration of Algorithm~\ref{alg_ds_basic_new}, the polling set 
    $\mathcal{D}_k$ is a $\Lambda$-PSS 
    for $B(\bx_k,\alpha_k)\cap \Omega$, and $\|\bd\| \leq d_{\max} \alpha_k$ 
    for all $\bd\in\mathcal{D}_k$, where $\Lambda>0$ and $d_{\max}>0$.
\end{assumption}

Under Assumption~\ref{ass_pss_convex_new}, Algorithm~\ref{alg_ds_basic_new} 
produces only feasible iterates and trial points. As a result, our 
analysis will focus on bounding the number of iterations and evaluations 
necessary to achieve $\pi(\bx_k) \leq \epsilon$, as in 
Section~\ref{sec_background_lincons}. We follow the same reasoning than 
in the unconstrained setting.

\begin{lemma} \label{lem_eventual_success_convex_new}
	Consider the $k$th iteration of Algorithm~\ref{alg_ds_basic_new} 
	under Assumptions~\ref{ass_smoothness}, \ref{ass_omega} 
    and~\ref{ass_pss_convex_new}.
    If $\pi(\bx_k) \neq 0$ and 
    $
        \alpha_k 
        < 
        \min\left(\frac{2 \pi(\bx_k)}{(L d_{\max}^2 + \cdec) \Lambda}, 1\right),
    $
    then iteration $k$ is successful.
\end{lemma}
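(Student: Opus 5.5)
We argue by contradiction, mirroring the proof of Lemma~\ref{lem_eventual_success_new}: suppose iteration $k$ is unsuccessful. By Assumption~\ref{ass_pss_convex_new}, every $\bd\in\calD_k$ satisfies $\bx_k+\bd\in\Omega$, so failure means the sufficient decrease condition~\eqref{eq_sufficient_decrease_new} fails for every $\bd\in\calD_k$. Combining this with the Lipschitz bound from Assumption~\ref{ass_smoothness} and $\|\bd\|\le d_{\max}\alpha_k$ gives, exactly as in~\eqref{eq_unsuccess_condition_new},
\begin{align*}
	-\frac{L d_{\max}^2+\cdec}{2}\,\alpha_k^2 \le \bd^T\grad f(\bx_k) \qquad \forall \bd\in\calD_k .
\end{align*}

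The new ingredient is the choice of test vector. Instead of the normalized negative gradient, which need not be a feasible displacement, we use the minimizer $\bv^*(\bx_k)$ from~\eqref{eq_pi_inner_product_defn}. Set $\bv_k := \alpha_k \bv^*(\bx_k)$. Then $\|\bv_k\|\le\alpha_k$. Since $\alpha_k<1$, the point
\begin{align*}
	\bx_k+\bv_k = (1-\alpha_k)\bx_k + \alpha_k(\bx_k+\bv^*(\bx_k))
\end{align*}
is a convex combination of two points of $\Omega$, hence feasible by convexity (Assumption~\ref{ass_omega}). This is the only place where the extra requirement $\alpha_k<1$ in the statement is used. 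We also have $-\grad f(\bx_k)^T\bv_k = \alpha_k\pi(\bx_k)$.

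Definition~\ref{def_lambda_pss_convex_new} now applies to $\bv_k$. It gives coefficients $c_i\ge0$ with $\sum_i c_i\le\Lambda$ and $\bv_k=\sum_i c_i\bd_i$. Therefore
\begin{align*}
	\pi(\bx_k) = -\frac{1}{\alpha_k}\sum_{i=1}^p c_i\,\bd_i^T\grad f(\bx_k) \le \frac{L d_{\max}^2+\cdec}{2}\,\alpha_k\Lambda,
\end{align*}
which contradicts $\alpha_k < \frac{2\pi(\bx_k)}{(L d_{\max}^2+\cdec)\Lambda}$.

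The main point requiring care is the feasibility of the scaled vector $\bv_k$, which rests on convexity together with $\alpha_k\le 1$. One could also take $\bv_k=\min(\alpha_k,1)\bv^*(\bx_k)$, but the hypothesis $\alpha_k<1$ makes this unnecessary. The remaining steps are routine.
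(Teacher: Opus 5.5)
Your proof is correct and follows the paper's own argument: the same unsuccessful-iteration inequality, the test vector $\alpha_k \bv^*(\bx_k)$ made feasible by convexity using $\alpha_k<1$, and the $\Lambda$-PSS decomposition to contradict the bound on $\alpha_k$. You also have the correct minus sign in the final chain, where the paper's displayed equality $-\frac{1}{\alpha_k}\hat{\bv}_k^T\grad f(\bx_k) = \frac{1}{\alpha_k}\sum_i c_i \bd_i^T\grad f(\bx_k)$ drops one.
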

\begin{proof}
    To find a contradiction, suppose iteration $k$ is unsuccessful. 
    By the same reasoning as in the unconstrained case 
    (Lemma~\ref{lem_eventual_success_new}), it follows that 
    \begin{align}
        -\frac{L d_{\max}^2+\cdec}{2} \: \alpha_k^2 \leq \bd^T \grad f(\bx_k), \label{eq_unsuccess_condition_convex_new}
    \end{align}
    for all $\bd\in\mathcal{D}_k$.

    Defining $\bv_k := \bv^*(\bx_k)$ as in \eqref{eq_pi_inner_product_defn}, we 
    have $\pi(\bx_k) = -\grad f(\bx_k)^T \bv_k$ with $\bx_k+\bv_k\in\Omega$ and 
    $\|\bv_k\| \leq 1$. Moreover, since $\pi(\bx_k) \neq 0$ by assumption, we 
    also have $\bv_k\neq\bm{0}$.

    Let $\hat{\bv}_k := \alpha_k \bv_k$, so $\|\hat{\bv}_k\| \leq \alpha_k$ and 
    $\bx_k+\hat{\bv}_k\in \Omega$ by convexity of $\Omega$ (recall that 
    $\alpha_k<1$). From Assumption~\ref{ass_pss_convex_new}, we 
    may write $\hat{\bv}_k = \sum_{i=1}^{p} c_i \bd_i$ for constants $c_i \geq 0$ 
    with $\sum_{i=1}^{p} c_i \leq \Lambda$.
    Using \eqref{eq_unsuccess_condition_convex_new}, we get
    \begin{align*}
        \pi(\bx_k) 
        = -\bv_k^T \grad f(\bx_k) 
        = -\frac{1}{\alpha_k} \hat{\bv}_k^T \grad f(\bx_k) 
        = \frac{1}{\alpha_k}\sum_{i=1}^{p} c_i \bd_i^T \grad f(\bx_k) 
        \leq \frac{L d_{\max}^2 + \cdec}{2} \: \alpha_k \Lambda,
    \end{align*}
    which contradicts $\alpha_k< \frac{2\pi(\bx_k)}{(Ld_{\max}^2+\sigma)\Lambda}$.
\end{proof}

\begin{theorem} \label{thm_wcc_convex_new}
    Suppose that we run Algorithm~\ref{alg_ds_basic_new} to 
    solve~\eqref{eq_convex_cons_problem} under 
    Assumptions~\ref{ass_smoothness}, \ref{ass_omega} 
    and~\ref{ass_pss_convex_new}. 
    Then, for any $\epsilon>0$, $\pi(\bx_k)\leq \epsilon$ occurs for the 
    first time after at most
    \begin{align}
        \left(1 + \frac{\log(\gammainc)}{\log(\gammadec^{-1})}\right) 
        \left[\frac{2[f(\bx_0)-\flow]}{\gamma \hat{\alpha}_{\min}^2}\right] 
        + \frac{\log(\alpha_0/\hat{\alpha}_{\min})}{\log(\gammadec^{-1})}
        \label{eq_wcc_convex_new}
    \end{align}
    iterations, where
    \begin{align}
        \hat{\alpha}_{\min} 
        := 
        \gammadec \min\left(\frac{2\epsilon}{(L d_{\max}^2+\cdec)\Lambda}, 1\right). 
        \label{eq_alpha_min_convex_new}
    \end{align}
\end{theorem}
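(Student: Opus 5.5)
The plan is to copy the proof of Theorem~\ref{thm_wcc_new} line by line. Lemma~\ref{lem_eventual_success_convex_new} replaces Lemma~\ref{lem_eventual_success_new}, and the only new ingredient is the extra cap of $1$ in the stepsize threshold. Throughout, I read the $\gamma$ appearing in~\eqref{eq_wcc_convex_new} as $\cdec$, as in the unconstrained proof. I first note that all iterates remain feasible. Indeed $\bx_0\in\Omega$, and under Assumption~\ref{ass_pss_convex_new} every $\bd\in\calD_k$ satisfies $\bx_k+\bd\in\Omega$. Hence $f(\bx_k)\ge\flow$ for all $k$ by Assumption~\ref{ass_smoothness}, and $\pi(\bx_k)$ is well defined.

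Fix $\epsilon>0$ and suppose $\pi(\bx_k)>\epsilon$ for all $k=0,\dots,K-1$. The first step is to show $\alpha_k\ge\hat\alpha_{\min}$ for all $k\le K-1$, by induction. For $k=0$ we need $\alpha_0\ge\hat\alpha_{\min}$. If this fails, the bound~\eqref{eq_wcc_convex_new} is to be read with $\log(\alpha_0/\hat\alpha_{\min})$ replaced by its positive part, exactly as in the unconstrained case. Alternatively, the induction below simply starts once $\alpha_k$ first exceeds the threshold, which happens within at most that many consecutive successful increases, and that count is absorbed similarly.

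For the inductive step, let $\bar\alpha:=\min\bigl(\frac{2\epsilon}{(Ld_{\max}^2+\cdec)\Lambda},1\bigr)$. If $\alpha_k<\bar\alpha$, then $\alpha_k<\min\bigl(\frac{2\pi(\bx_k)}{(Ld_{\max}^2+\cdec)\Lambda},1\bigr)$, so iteration $k$ is successful by Lemma~\ref{lem_eventual_success_convex_new}. Then $\alpha_{k+1}=\min(\gammainc\alpha_k,\alpha_{\max})\ge\alpha_k$. One subtlety: this needs $\alpha_{\max}\ge\alpha_k$, which holds because $\alpha_k\le\alpha_{\max}$ always. If instead $\alpha_k\ge\bar\alpha$, then $\alpha_{k+1}\ge\gammadec\alpha_k\ge\gammadec\bar\alpha=\hat\alpha_{\min}$. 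In both cases $\alpha_{k+1}\ge\hat\alpha_{\min}$ follows.

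Next I split $\{0,\dots,K-1\}$ into the successful set $\mathcal S$ and the unsuccessful set $\mathcal U$. Summing the sufficient decrease~\eqref{eq_sufficient_decrease_new} over $\mathcal S$, and using that $f$ is nonincreasing along iterates and bounded below by $\flow$ on $\Omega$, gives $|\mathcal S|\le 2[f(\bx_0)-\flow]/(\cdec\hat\alpha_{\min}^2)$. For unsuccessful iterations, the update rule gives $\hat\alpha_{\min}\le\alpha_{K-1}\le\alpha_0\gammainc^{|\mathcal S|}\gammadec^{|\mathcal U|}$; this uses that the cap at $\alpha_{\max}$ only decreases $\alpha$. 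Taking logarithms bounds $|\mathcal U|$ as in~\eqref{eq_bounditunsucc_unc}. Adding the two bounds yields $K\le$~\eqref{eq_wcc_convex_new}, so $\pi(\bx_k)\le\epsilon$ must occur within that many iterations.

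The only genuinely delicate point is the lower bound $\alpha_k\ge\hat\alpha_{\min}$. It must account for the additional condition $\alpha_k<1$ in Lemma~\ref{lem_eventual_success_convex_new}, which is why the threshold is the minimum with $1$, and for the initial value $\alpha_0$. Everything else is identical bookkeeping to the unconstrained proof.
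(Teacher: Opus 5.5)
Your argument is the paper's proof. Lemma~\ref{lem_eventual_success_convex_new} with threshold $\bar\alpha=\min\bigl(\frac{2\epsilon}{(Ld_{\max}^2+\cdec)\Lambda},1\bigr)$ gives $\alpha_k\ge\gammadec\bar\alpha=\hat{\alpha}_{\min}$, and the counting is then copied from Theorem~\ref{thm_wcc_new}; the paper does the same and silently assumes $\alpha_0\ge\hat{\alpha}_{\min}$.

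Your proposed handling of the case $\alpha_0<\hat{\alpha}_{\min}$ does not work, for two reasons.
\begin{itemize}
\item Replacing $\log(\alpha_0/\hat{\alpha}_{\min})$ by its positive part leaves the bound on $|\mathcal S|$ unjustified. The early successful iterations have $\alpha_k<\hat{\alpha}_{\min}$, so they decrease $f$ by less than $\frac{\cdec}{2}\hat{\alpha}_{\min}^2$.
\item Your alternative (start the induction once $\alpha_k$ passes the threshold) also fails. Since $\alpha_k\le\alpha_{\max}$ and nothing prevents $\alpha_{\max}<\hat{\alpha}_{\min}$, the threshold may never be reached.
\end{itemize}

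In fact the bound is then false. Take $n=1$, $f(x)=-gx$ with $g>\epsilon$, and $\Omega=\{x\le M\}$. Use the polling sets $\{-\alpha_k,\min(\alpha_k,M-x_k)\}$, which are $1$-PSSs with $d_{\max}=1$, and take $\alpha_0=\alpha_{\max}$ tiny. Each iteration with $\pi(x_k)>\epsilon$ then succeeds with a step of length exactly $\alpha_0$. So roughly $(M-x_0-\epsilon/g)/\alpha_0$ iterations pass before $\pi(x_k)\le\epsilon$, while \eqref{eq_wcc_convex_new} under your positive-part reading does not depend on $\alpha_0$.

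The correct repair is to run your induction with $\min(\alpha_0,\hat{\alpha}_{\min})$ in place of $\hat{\alpha}_{\min}$. Both branches of the inductive step go through unchanged, and $\min(\alpha_0,\hat{\alpha}_{\min})$ then appears in both terms of the bound. Alternatively, state the assumption $\alpha_0\ge\hat{\alpha}_{\min}$ explicitly.

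Separately, your inequality $\alpha_{K-1}\le\alpha_0\gammainc^{|\mathcal S|}\gammadec^{|\mathcal U|}$ can fail when iteration $K-1$ is unsuccessful, because the right-hand side then carries one factor $\gammadec$ too many. Use $\alpha_K$ instead; your induction also bounds it below, since $\pi(\bx_{K-1})>\epsilon$.
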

\begin{proof}
    Suppose that $\min_{k=0,\ldots,K-1} \pi(\bx_k) > \epsilon$.
    From Lemma~\ref{lem_eventual_success_convex_new}, we know that 
    any iteration $k$ such that 
    $\alpha_k < \min(\frac{2\epsilon}{(L d_{\max}^2+\cdec)\Lambda}, 1)$ 
    is successful, and leads to an increase in $\alpha_k$.
    Hence for all iterations $k=0,\ldots,K-1$, we must have 
    $\alpha_k \geq \hat{\alpha}_{\min}$ with $\hat{\alpha}_{\min}$ defined in \eqref{eq_alpha_min_convex_new}.

    The remainder of the proof is identical to that of 
    Theorem~\ref{thm_wcc_new}.
\end{proof}

As in the unconstrained case, the complexity bound~\eqref{eq_wcc_convex_new} 
simplifies to $\bigO(\Lambda^2 \epsilon^{-2})$ iterations for small $\epsilon$, 
and yields a bound of $\bigO(p \Lambda^2 \epsilon^{-2})$ objective evaluations 
when the polling sets have $p$ vectors. The dependency on $\epsilon$ is the same 
than other DFO and derivative-based methods applied to convex-constrained 
problems~\cite{Cartis2012,Hough2022}. In the next section, we investigate the 
dependency on $n$ through $\Lambda$ and $p$ by explicit construction of 
$\Lambda$-PSS.


\section{Constructing a $\bm{\Lambda}$-PSS with explicit linear constraints} 
\label{sec_poll_sets}

We now describe the generation of $\Lambda$-PSSs when the constraint set 
$\Omega$ is given by~\eqref{eq_linear_cons} and satisfies
Assumption~\ref{ass_omega}. We first consider the case of bound constraints, 
then extend our approach to the general case.

\subsection{Bound Constraints} 
\label{sec_construct_bounds}

Suppose first that the feasible set is given by
\begin{align}
    \Omega = \{ \bx\in\R^n : \bx^L \leq \bx \leq \bx^U\},
    \label{eq_omega_bounds}
\end{align}
where $x^L_i < x^U_i$, $i=1,\ldots,n$. In this setting , given a (feasible) point 
$\bx$ and a stepsize $\alpha>0$, the approximate tangent cone is generated 
by positive and negative coordinate directions $\pm \be_i$ such that 
$\bx\pm\alpha\be_i \in \Omega$, while the approximate normal cone is generated 
by the remaining positive and negative coordinate directions. Using these 
vectors as $\calD$ guarantees that
$\cm_{T_{\Omega}(\bx,\alpha)}(\calD) \ge \tfrac{1}{\sqrt{n}}$~\cite{Gratton2019}.

To define a $\Lambda$-PSS for the same pair ($\bx$,$\alpha$), we scale all 
positive and negative directions to ensure feasibility, i.e. we consider
\begin{align}
    \calD = \cup_{i=1}^{n} \{-\alpha_{-i} \be_i, \alpha_i \be_i\}, 
    \label{eq_pss_bounds}
\end{align}
where the scalings $\alpha_{\pm i}$ are given by
\begin{align}
    \alpha_{-i} := \min(\alpha, x_i-x^L_i) \qquad \text{and} \qquad
    \alpha_i := \min(\alpha, x^U_i-x_i).
    \label{eq_alphai_bounds}
\end{align}
Note that all the scaling values~\eqref{eq_alphai_bounds} lie in 
$[0,\alpha]$ since $\bx \in \Omega$. Importantly, the value $\alpha_{\pm i}=0$ 
is allowed, and amounts to discarding the corresponding direction 
(note that $\alpha_i + \alpha_{-i}>0$ since $x_i^L < x_i^U$). The next 
result quantifies the quality of the set~\eqref{eq_pss_bounds}.

\begin{theorem} \label{thm_pss_bounds}
    Let $\Omega$ be given by~\eqref{eq_omega_bounds}. Given any 
    $\bx \in \Omega$ and $\alpha>0$, the set defined by~\eqref{eq_pss_bounds} 
    is a $\Lambda$-PSS for $B(\bx,\alpha)\cap \Omega$ with 
	\begin{equation}
	\label{eq:thm_pss_bounds}
		\Lambda 
		\; = \;
		\min\left[n,
		\frac{\sqrt{n}\alpha}{\min\left\{\alpha,
		\min_{x_i \neq x_i^L} x_i-x_i^L, 
		\min_{x_i \neq x_i^U} x_i^U-x_i\right\}} 
		\right].
	\end{equation}	    
\end{theorem}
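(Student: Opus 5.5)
Feasibility of the polling directions comes first. Each $\bx-\alpha_{-i}\be_i$ and $\bx+\alpha_i\be_i$ lies in $\Omega$, because by \eqref{eq_alphai_bounds} only coordinate $i$ moves and it stays in $[x_i^L,x_i^U]$. Next, fix $\bv$ with $\bx+\bv\in\Omega$ and $\|\bv\|\le\alpha$. The natural nonnegative decomposition is coordinatewise. If $v_i>0$, set $c_i:=v_i/\alpha_i$ on the direction $\alpha_i\be_i$. If $v_i<0$, set $c_{-i}:=|v_i|/\alpha_{-i}$ on $-\alpha_{-i}\be_i$. All other coefficients are zero.

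This is well defined. If $v_i>0$, then feasibility of $\bx+\bv$ gives $0<v_i\le x_i^U-x_i$, and also $v_i\le\|\bv\|\le\alpha$. Hence $\alpha_i\ge v_i>0$, and so $c_i=v_i/\alpha_i\le 1$. The case $v_i<0$ is symmetric. So every coefficient is nonnegative and at most $1$, and at most one coefficient per coordinate is nonzero. This gives $\|\bc(\bv)\|_1\le n$, the first term in the minimum of \eqref{eq:thm_pss_bounds}.

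For the second term, let $\delta$ be the denominator in \eqref{eq:thm_pss_bounds}. The key claim is that every nonzero scaling satisfies $\alpha_{\pm i}\ge\delta$. Indeed, $\alpha_i=\min(\alpha,x_i^U-x_i)$, and if it is nonzero then $x_i\ne x_i^U$, so $x_i^U-x_i$ appears inside the minimum defining $\delta$; the same holds for $\alpha_{-i}$. Every coefficient actually used has a nonzero scaling, by the previous paragraph. Therefore
\begin{align*}
\|\bc(\bv)\|_1=\sum_i \frac{|v_i|}{\alpha_{\pm i}}\le \frac{\|\bv\|_1}{\delta}\le\frac{\sqrt{n}\,\|\bv\|}{\delta}\le\frac{\sqrt{n}\,\alpha}{\delta}.
\end{align*}
Taking the minimum of the two bounds gives \eqref{eq:thm_pss_bounds}. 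Note that $\delta>0$ because the minima range only over strictly positive gaps; if no such gaps exist, the convention is $\delta=\alpha$.

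The only delicate point is the bookkeeping for zero scalings, i.e. showing that a coefficient is never needed on a discarded direction. This is exactly the argument that $v_i>0$ forces $x_i<x_i^U$, hence $\alpha_i>0$; the $v_i<0$ case is symmetric. Lemma~\ref{lem_descent_implies_pss_new} cannot be used directly here because the directions are not uniformly bounded below in norm, so the direct coordinatewise construction is used instead.
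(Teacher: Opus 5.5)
Your proof is correct and follows essentially the same route as the paper. Both arguments split $\bv$ coordinatewise into its positive and negative parts. Each coefficient is at most $1$ by feasibility of $\bx+\bv$ together with $|v_i|\le\alpha$, and at most $|v_i|/\delta$ because every nonzero scaling is at least the denominator $\delta$; the bound $\|\bv\|_1\le\sqrt{n}\|\bv\|$ then finishes it. Organizing the cases by the sign of $v_i$, rather than by which scalings vanish as the paper does, is only a cosmetic difference.
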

\begin{proof}
	The definition of $\alpha_{\pm i}$ in~\eqref{eq_alphai_bounds} 
	ensures that $\bx+\bd\in \Omega$ for any $\bd \in \calD$.

    Consider now any vector $\bv\in\R^n$ such that $\bx+\bv\in\Omega$ and 
    $\|\bv\|\leq\alpha$. We have 
    $\bv=\sum_{i=1}^{n} v^+_i \be_i + v^-_i (-\be_i)$ where 
    $v^+_i = \max(\bv_i,0)$ and $v^-_i=\max(-v_i,0)$ are the positive and 
    negative parts of $\bv_i$, respectively. We can then write $\bv$ in terms of 
    the polling directions~\eqref{eq_pss_bounds} as 
    \begin{align}
        \bv 
        = \sum_{i=1}^{n} c_i (\alpha_i \be_i) + c_{-i} (-\alpha_{-i} \be_i), 
        \label{eq_poll_bounds_tmp1} 
    \end{align}
    where $c_i = v^+_i / \alpha_i$ when $\alpha_i>0$ and $c_i=0$ otherwise, 
    and $c_{-i} = v^-_i / \alpha_{-i}$ when $\alpha_{-i}>0$ and $c_{-i}=0$ 
    otherwise. To obtain the desired result, we will bound each $c_i+c_{-i}$ 
    separately, noting that the bound can be refined if $v_i=0$ for any $i$, 
    given that this implies $c_i=c_{-i}=0$.
    
    Suppose first that $\alpha_i>0$ and $\alpha_{-i}=0$. In that case, we 
    have $v_i^+ = v_i = |v_i|$ and
    \begin{equation}
    \label{eq:alphaMi0}
    	c_i+c_{-i} = c_i = \frac{v_i^+}{\alpha_i}
    	= \frac{|v_i|}{\min(\alpha,x_i^U-x_i)} 
	\end{equation}     
	where $\alpha_i>0$ guarantees that $x_i<x_i^U$. From $\|v\| \le \alpha$, 
	we get that $|v_i| \le \alpha$, while $\bx+\bv \in \Omega$ ensures that 
	$|v_i|=v_i^+ \le x_i^U - x_i$. It follows that $c_i \le 1$.
	
	Suppose now that $\alpha_{i}=0$ and $\alpha_{-i}>0$. The same argument 
	than above leads to
	\begin{equation}
	\label{eq:alphaPi0}
		c_i+c_{-i} = c_{-i} 
		= \frac{v_i^-}{\alpha_{-i}}
		= \frac{|v_i|}{\min(\alpha,x_i-x_i^L)},
	\end{equation}
	where again the properties of $\bv$ guarantee that $c_{-i} \le 1$.
    
    Finally, suppose that $\alpha_i>0$ and $\alpha_{-i}>0$. In that case,
    \begin{equation}
    \label{eq:alphaPMpos}
    	c_i+c_{-i} 
    	= \frac{v_i^+}{\alpha_i} + \frac{v_i^-}{\alpha_{-i}}
    	= \frac{v_i^+}{\min(\alpha,x_i^U-x_i)} 
    	+ \frac{v_i^-}{\min(\alpha,x_i - x_i^L)} 
    	\le \frac{|v_i|}{\min(\alpha,x_i-x_i^L,x_i^U-x_i)} \le 1.
  	\end{equation}
  	
  	Combining~\eqref{eq:alphaMi0}, \eqref{eq:alphaPi0} and~\eqref{eq:alphaPMpos}, 
  	we arrive at
  	\begin{equation}
  	\label{eq:boundcPicMi}
  		c_i + c_{-i} 
  		\le 
  		\frac{|v_i|}{\min(\alpha,\delta_i)},
  		\qquad 
  		\delta_i:=\left\{
  			\begin{array}{ll}
  				x_i-x_i^L &\mbox{if\ $x_i=x_i^U$}\\
  				x_i^U-x_i &\mbox{if\ $x_i=x_i^L$}\\
  				\min(x_i-x_i^L,x_i^U-x_i) &\mbox{otherwise,}
  			\end{array}
  		\right.
  	\end{equation}
  	and the right-hand side of~\eqref{eq:boundcPicMi} is smaller than $1$.
  	Summing~\eqref{eq:boundcPicMi} over all indices $i$ and using that 
  	$\|\bv\|_1 \le \sqrt{n}\|\bv\| \le \sqrt{n}\alpha$, we obtain
  	\begin{equation}
  	\label{eq:boundsumcPicMi}
  		\sum_{i=1}^n (c_i+c_{-i}) 
  		\le 
  		\sum_{i=1}^n \frac{|v_i|}{\min(\alpha,\delta_i)} 
  		\le \frac{1}{\min(\alpha,\min_i \delta_i)}\|\bv\|_1 
  		\le \frac{\sqrt{n}\alpha}{\min(\alpha,\min_i \delta_i)}.
  	\end{equation}
  	Finally, using that $c_i+c_{-i} \le 1$ for every $i$ ensures that 
  	$\sum_{i=1}^n (c_i+c_{-i}) \le n$, which together 
  	with~\eqref{eq:boundsumcPicMi} gives~\eqref{eq:thm_pss_bounds}.
\end{proof}

The formula~\eqref{eq:thm_pss_bounds} reduces $\Lambda=\sqrt{n}$ when no 
constraints are approximately active for $(\bx,\alpha)$ (or when the 
problem is unconstrained), which matches the cosine measure of tangent 
cone generators in both the unconstrained and bound-constrained setting, 
where $\kappa=\tfrac{1}{\sqrt{n}}$~\cite{Gratton2019}. In the general 
setting, however, we have $\sqrt{n} \le \Lambda \le n$, and the upper 
bound is attained when $n$ linearly independent bound constraints are 
approximately active, which can occur near an extreme point.
This increase in $\Lambda$ seems unavoidable when using directions beyond 
tangent cone generators, but affects our complexity guarantees. Still, our 
polling set choice provides a richer description of the feasible region, 
and can lead to faster convergence, as illustrated by 
Example~\ref{ex_bounds} and our numerical results in 
Section~\ref{sec_numerics}.

\begin{example}
\label{ex_bounds}
    Suppose we have the simple 1D minimization problem
    \begin{align}
        \min_{x\in\R} -x, \quad \text{s.t.} \quad x \leq 1.1,
    \end{align}
    with solution $x^*=1.1$, and consider running 
    Algorithm~\ref{alg_ds_basic_new} with either 
    $\mathcal{D}_k$ taken as generators of the approximate tangent cone 
    (per \cite{Gratton2019}), or as \eqref{eq_pss_bounds}.
    For demonstration purposes, we take $x_0=0$, $\alpha_0=1$, the standard 
    values $\gammadec=0.5$ and $\gammainc=2$, and assume $\gamma<1$ so that 
    all positive steps are successful in the analysis below.

    Using just the approximate tangent cone, our initial polling set is 
    $\mathcal{D}_0=\{-1,1\}$, which gives the successful step $x_1=x_0+1=1$ 
    and $\alpha_1=2$. Then, $\mathcal{D}_1=\{-2\}$ and so the second iteration 
    is unsuccessful.
    We continue having unsuccessful iterations until $\alpha_k \leq x^*-x_k$, 
    and so $\mathcal{D}_k=\{-\alpha_k,\alpha_k\}$ and we get $x_{k+1}=x_k+\alpha_k$.
    Since $\alpha_k$ can only take integer powers of 2 (by choice of $\gammadec$, 
    $\gammainc$), this first happens for $\alpha_6=2^{-4}$ with $x_7=1+2^{-4}=1.0625$.
    After this, the next successful iteration comes from $\alpha_9=2^{-5}$ giving 
    $x_{10}=1+2^{-4}+2^{-5}=1.09375$. Continuing this way, our iterates $x_k$ 
    correspond to (truncated) binary fractions that under-approximate $x^*$. Since 
    $x^*$ has an infinite binary expansion, we converge linearly, but never 
    actually reach $x^*$ in finite time.

    However, if we instead use \eqref{eq_pss_bounds}, we get the same first 
    iteration with $x_1=1$ and $\alpha_1=2$. Then, we have $\calD_1=\{-2, 0.1\}$ 
    so that $x_2=x_1+0.1=x^*$ and we converge to the solution.
\end{example}


%

\subsection{Independent Linear Inequality Constraints}
\label{sec_construct_gen}

We now come back to general polyhedral sets of the form~\eqref{eq_linear_cons} 
satisfying Assumption~\ref{ass_omega}. Existing theory summarized in 
Section~\ref{sec_background_lincons} shows that a polling set is of sufficient 
quality for a pair $(\bx,\alpha) \in \Omega \times \R_{>0}$ provided it 
contains a set of generators for the approximate tangent cone 
$T_{\Omega}(\bx,\alpha)$. The polar decomposition~\cite{Moreau1962} 
\begin{align*}
    \bv 
    = 
    \proj_{T_{\Omega}(\bx,\alpha)}(\bv) + \proj_{N_{\Omega}(\bx,\alpha)}(\bv), 
    \qquad \forall \bv\in\R^n,
\end{align*}
suggests that generators from the approximate normal cone 
$N_{\Omega}(\bx,\alpha)$ could provide additional directions when scaled 
appropriately. This strategy was found beneficial in direct-search 
implementations~\cite{Lewis2007}, even though it produces polling sets for 
which Assumption~\ref{ass_poll_generators_lincons} does not hold (and thus 
theoretical guarantees are lost).

One may naturally wonder whether the sets used in practice form $\Lambda$-PSSs. 
As shown by the example below, this is true but the constant $\Lambda$ can 
be arbitrarily large, while we provide another $\Lambda$-PSS construction 
with bounded $\Lambda$.

\begin{figure}[tb]
  \centering
  \begin{subfigure}[b]{0.48\textwidth}
    \includegraphics[width=\textwidth]{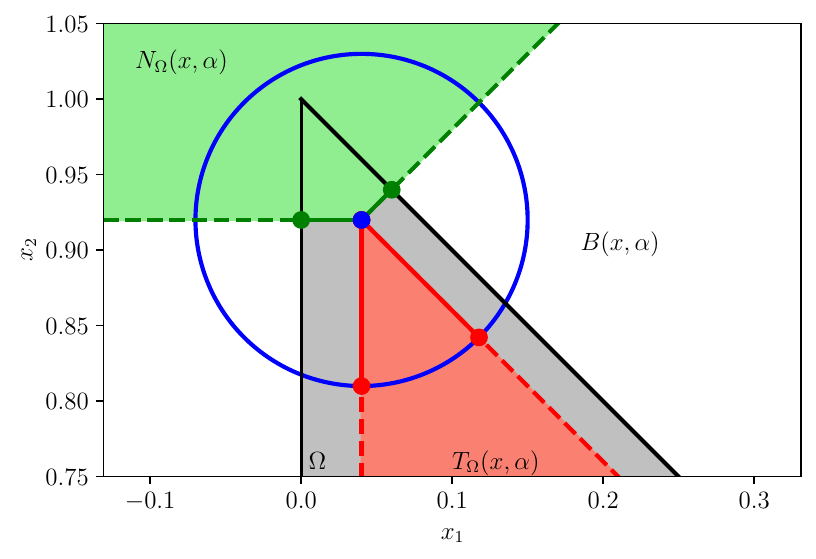}
    \caption{Scaled generators of $N_{\Omega}(\bx,\alpha)$}
    \label{fig_pyramid1}
  \end{subfigure}
  ~
  \begin{subfigure}[b]{0.48\textwidth}
    \includegraphics[width=\textwidth]{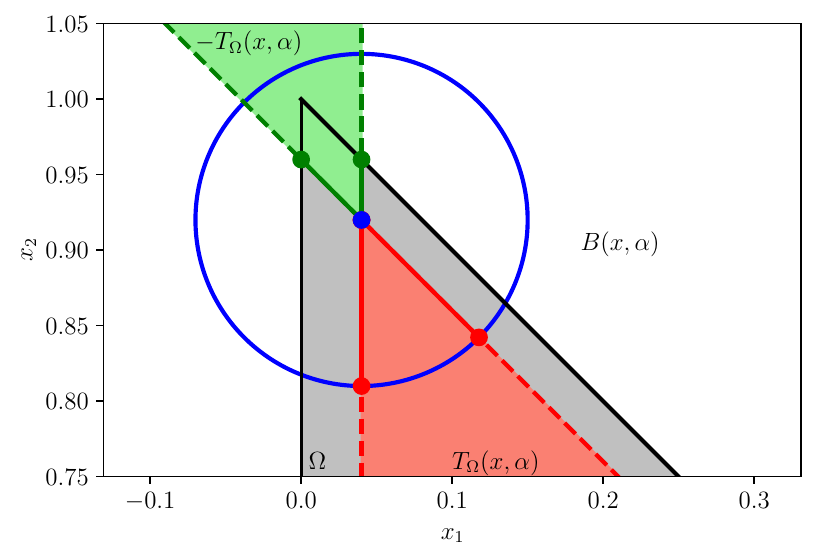}
    \caption{Scaled generators of $-T_{\Omega}(\bx,\alpha)$}
    \label{fig_pyramid2}
  \end{subfigure}
  \caption{Illustration of Example~\ref{ex_pyramid}. A polling 
  set is built by combining generators of $T_{\Omega}(\bx,\alpha)$ 
  with scaled generators of $N_{\Omega}(\bx,\alpha)$ or 
  $-T_{\Omega}(bx,\alpha)$.}
  \label{fig_pyramid}
\end{figure}

\begin{example} 
\label{ex_pyramid}
	Consider $\Omega=\{\bx \in \R^2\ |\ x_1 \ge 0, x_1+x_2 \le 1\}$, 
	and let $\bx=[\epsilon_1, 1-\epsilon_2]^T$ with 
	$\epsilon_1< \epsilon_2\ll 1$. Given a large enough $\alpha>0$ 
	such that the vertex $[0,1]^T \in B(\bx,\alpha)$, both constraints 
	are nearly active at $\bx$. Thus, the generators of 
	$T_{\Omega}(\bx,\alpha)$ are $[0,-1]^T$ and $[1, -1]^T$, while 
	the generators of $N_{\Omega}(\bx,\alpha)$ are $[-1,0]^T$ and 
	$[1, 1]^T$. Figure~\ref{fig_pyramid1} shows a polling set built 
	using those generators, namely
    \begin{align}
        \mathcal{D} = 
        \underbrace{
        \left\{
        \begin{bmatrix} 
    	    0 \\ 
	        -\alpha 
        \end{bmatrix}, 
        \begin{bmatrix} 
        	\alpha/\sqrt{2} \\ 
        	-\alpha/\sqrt{2} 
        \end{bmatrix} 
        \right\}
        }_{T_{\Omega}(\bx,\alpha)} 
        \cup 
        \underbrace{
        \left\{ 
        \begin{bmatrix} -\epsilon_1 \\ 0 \end{bmatrix}, 
        \begin{bmatrix} (\epsilon_2-\epsilon_1)/2 \\ 
        (\epsilon_2-\epsilon_1)/2 \end{bmatrix}
        \right\}
        }_{N_{\Omega}(\bx,\alpha)}.
    \end{align}
    Those vectors form a $\Lambda$-PSS in the sense of 
    Definition~\ref{def_lambda_pss_convex_new}. However, to 
    represent the vector $\bv=[-\epsilon_1, \epsilon_2]^T$ 
    (i.e.~$\bx+\bv=[1,1]^T$) as a positive linear combination of 
    these generators, we must write
    \begin{align}
    \label{eq:v_pyramid1}
        \bv 
        = 
        \frac{\epsilon_1+\epsilon_2}{\epsilon_1} 
        \begin{bmatrix} 
        	-\epsilon_1 \\ 
        	0 
       	\end{bmatrix} 
        + \frac{2\epsilon_2}{\epsilon_2-\epsilon_1} 
        \begin{bmatrix} 
        	(\epsilon_2-\epsilon_1)/2 \\ 
        	(\epsilon_2-\epsilon_1)/2 
        \end{bmatrix}.
    \end{align}
    The positive linear combination coefficients 
    in~\eqref{eq:v_pyramid1} grow arbitrarily large by taking 
    $\epsilon_1$ and $\epsilon_2$ small enough 
    (i.e. making $\bx$ arbitrarily close to $[0 1]^T$). As a 
    result, the value of $\Lambda$ can be made arbitrarily large.
    
    Figure~\ref{fig_pyramid2} shows another polling set obtained 
    by replacing the generators of $N_{\Omega}(\bx,\alpha)$ by 
    that of $-T_{\Omega}(\bx,\alpha)$, with proper scaling to 
    ensure feasibility. Mathematically, this set is
    \begin{align*}
        \mathcal{D}^{'} 
        = 
        \underbrace{
        \left\{
        \begin{bmatrix} 
        	0 \\ 
        	-\alpha 
        \end{bmatrix}, 
        \begin{bmatrix} 
        	\alpha/\sqrt{2} \\ 
        	-\alpha/\sqrt{2} 
        \end{bmatrix} 
        \right\}
        }_{T_{\Omega}(\bx,\alpha)} 
        \cup 
        \underbrace{
        \left\{ 
        \begin{bmatrix} 
        	0 \\ 
        	\epsilon_2-\epsilon_1 
        \end{bmatrix}, 
        \begin{bmatrix} 
        	-\epsilon_1 \\ 
        	\epsilon_1 
        \end{bmatrix} 
        \right\}
        }_{-T_{\Omega}(\bx,\alpha)}.
    \end{align*}
    Now, for the same $\bv$ than in~\eqref{eq:v_pyramid1}, the positive 
    linear combination with columns of $\calD^{'}$ is
    \begin{align}
    \label{eq:v_pyramid2}
        \bv = 
        \begin{bmatrix} 
        	0 \\ 
        	\epsilon_2-\epsilon_1 
        \end{bmatrix} 
        + 
        \begin{bmatrix} 
        	-\epsilon_1 \\ 
        	\epsilon_1 
        \end{bmatrix},
    \end{align}
    whose coefficients are uniformly bounded for all values of 
    $\epsilon_1$ and $\epsilon_2$.
\end{example}

Motivated by Example~\ref{ex_pyramid}, we propose to construct our 
polling set by taking the generators of $T_{\Omega}(\bx,\alpha)$ and their 
negatives, scaled to ensure feasibility. If the generators of 
$T_{\Omega}(\bx,\alpha)$ do not (linearly) span $\R^n$, then we will also need 
to include vectors that positively span the corresponding null space.
We will prove that this construction yields a $\Lambda$-PSS (i.e.~satisfies 
Assumption~\ref{ass_pss_convex_new}) in the case where 
$\{\ba_i : i\in I(\bx,\alpha)\}$ are linearly independent.

First, we state some characterizations of polar cones.

\begin{lemma} \label{lem_cone_technical}
	Let $A=[\ba_1\ \cdots\ \ba_q] \in \R^{n \times q}$ and consider the cones
	\[
		K_1 := \{ \by\in\R^n : \ba_j^T \by \leq 0, \:\: \forall j=1,\ldots,q \}, 
		\quad \text{and} \quad K_2 := \operatorname{cone}(\{\ba_1,\ldots,\ba_q\}).
	\]
	Then, the following properties hold:
    \begin{enumerate}[label=(\alph*)]
        \item $K_1^{\circ} = K_2$ and $K_2^{\circ} = K_1$. \label{lem_cone_1c}
        \item If $q\leq n$ and $\operatorname{rank}(A)=q$, then 
        \label{lem_cone_2a}
        \begin{align*}
            K_1^{\circ} = \{ \by\in\R^n : -A^{\dagger} \by \leq \bm{0} \:\: 
            \text{and} \:\: (I-A A^{\dagger})\by=\bm{0}\},
        \end{align*}
        where $A^{\dagger} = (A^T A)^{-1} A^T$.
        \item Given $B \in \R^{l \times n}$, the polar of \label{lem_cone_2b}
        \begin{align*}
            K_B 
            := 
            \{ \by\in\R^n : \ba_j^T \by \leq 0, \:\: \forall j=1,\ldots,p\} 
            \cap 
            \operatorname{nul}(B^T),
        \end{align*}
        is given by
        \begin{align*}
            K_B^{\circ} 
            = 
            \{\by\in\R^n : \by = A\bm{\lambda} + B\br, 
            \: \forall \bm{\lambda} \geq \bm{0} \: \text{and} \: \br\in\R^l\}.
        \end{align*}
    \end{enumerate}
\end{lemma}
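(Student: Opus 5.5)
Part \ref{lem_cone_1c} is the classical Farkas/polar duality for finitely generated cones. I would first show directly that $K_2^{\circ}=K_1$. A vector $\by$ satisfies $\by^T(\sum_j\lambda_j\ba_j)\le 0$ for all $\bm{\lambda}\ge\bm{0}$ if and only if $\ba_j^T\by\le 0$ for every $j$: one direction takes $\bm{\lambda}=\be_j$, the other follows from linearity. Next, $K_2$ is a closed convex cone, since finitely generated cones are closed; this can be cited from \cite{Stoer1970} or obtained via Carathéodory. The bipolar theorem then gives $K_1^{\circ}=K_2^{\circ\circ}=K_2$. Alternatively, the inclusion $K_1^{\circ}\subseteq K_2$ can be proved with Farkas' lemma exactly as in the proof of Lemma~\ref{lem_descent_implies_pss_new}. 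If $\bz\notin K_2$, the system $A\bm{\lambda}=\bz$, $\bm{\lambda}\ge\bm{0}$ is infeasible. Farkas then gives $\by$ with $A^T\by\le\bm{0}$ and $\bz^T\by>0$, so $\by\in K_1$ and hence $\bz\notin K_1^{\circ}$.

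For part \ref{lem_cone_2a}, I would use (a) to identify $K_1^{\circ}=K_2=\{A\bm{\lambda}:\bm{\lambda}\ge\bm{0}\}$ and then characterize this set via the pseudoinverse. Since $\operatorname{rank}(A)=q$, the matrix $A^TA$ is invertible, $A^{\dagger}A=I_q$, and $AA^{\dagger}$ is the orthogonal projector onto $\operatorname{range}(A)$.

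1. Suppose $\by=A\bm{\lambda}$ with $\bm{\lambda}\ge\bm{0}$. Then $A^{\dagger}\by=\bm{\lambda}\ge\bm{0}$ and $(I-AA^{\dagger})\by=A\bm{\lambda}-A\bm{\lambda}=\bm{0}$.
2. Conversely, suppose $(I-AA^{\dagger})\by=\bm{0}$. Then $\by=A(A^{\dagger}\by)$, and $\bm{\lambda}:=A^{\dagger}\by\ge\bm{0}$ shows $\by\in K_2$.

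The only care needed is the sign convention: the condition $-A^{\dagger}\by\le\bm{0}$ in the statement is exactly $\bm{\lambda}\ge\bm{0}$.

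For part \ref{lem_cone_2b}, I would reduce to (a) by rewriting the constraint $\by\in\operatorname{nul}(B^T)$ as the pair of inequality families $\bb_j^T\by\le 0$ and $(-\bb_j)^T\by\le 0$, where $\bb_j$ are the columns of $B$. Then $K_B$ is a cone of type $K_1$ with generator matrix $[A\ \ B\ \ -B]$. Applying (a), $K_B^{\circ}=\operatorname{cone}(\col(A)\cup\col(B)\cup\col(-B))$. Writing $\br=\bm{\mu}^+-\bm{\mu}^-$, this is exactly $\{A\bm{\lambda}+B\br:\bm{\lambda}\ge\bm{0},\ \br\in\R^l\}$. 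The statement writes ``$\forall$'', but the intended meaning is clearly ``for some $\bm{\lambda}\geq\bm{0}$ and $\br$''. The index $p$ in the statement should read $q$, i.e.\ the columns of $A$.

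The main obstacle is part (a), specifically closedness of $K_2$, or equivalently invoking the right version of Farkas' lemma. Once (a) is settled, (b) and (c) are short algebraic consequences.
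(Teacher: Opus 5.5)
Your proof is correct, but it takes a different route from the paper. The paper gives no argument: it cites \cite{Stoer1970} (Eqs.~(2.8.3)--(2.8.4)) for (a), and \cite{Dobler1994} (Theorem~4.2 and Eq.~(6)) for (b) and (c).

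You make the lemma self-contained instead:
\begin{itemize}
\item For (a), you combine the elementary identity $K_2^{\circ}=K_1$ with either the bipolar theorem (via closedness of finitely generated cones) or Farkas' lemma. The paper already uses Farkas' lemma in the proof of Lemma~\ref{lem_descent_implies_pss_new}.
\item For (b), you identify $K_2=\{A\bm{\lambda}:\bm{\lambda}\ge\bm{0}\}$ directly, using $A^{\dagger}A=I_q$ and the fact that $AA^{\dagger}$ is the orthogonal projector onto $\operatorname{range}(A)$.
\item For (c), you split the equalities $B^T\by=\bm{0}$ into paired inequalities and reapply (a) to the matrix $[A\ \ B\ \ {-B}]$.
\end{itemize}

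Your route reduces everything to a single Farkas-type fact. It also shows that full column rank is needed only in (b), to recover the coefficients as $\bm{\lambda}=A^{\dagger}\by$, while (a) and (c) hold for arbitrary $A$. The citation route is shorter, but leaves the reader to check that the cited statements match the paper's notation.

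On notation, you flag two typos ($\forall$ should be ``for some'', and $p$ should be $q$); there is a third. The statement's $B\in\R^{l\times n}$ is dimensionally inconsistent with $\operatorname{nul}(B^T)\subseteq\R^n$ and with $B\br\in\R^n$ for $\br\in\R^l$. It should read $B\in\R^{n\times l}$, which your use of columns $\bm{b}_j\in\R^n$ already assumes. In the paper's only application, $B=I-AA^{\dagger}$ is square, so nothing downstream is affected.
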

\begin{proof}
    Part~\ref{lem_cone_1c} is \cite[Eq.~(2.8.3)--(2.8.4)]{Stoer1970}. 
    Part~\ref{lem_cone_2a} is \cite[Theorem 4.2]{Dobler1994}. 
    Part~\ref{lem_cone_2b} is \cite[Eq.~(6)]{Dobler1994}.
\end{proof}

We will use Lemma~\ref{lem_cone_technical} to characterize $\Lambda$-PSSs 
associated with $\bx \in \Omega$ and $\alpha>0$ under the following linear
independence assumption.

\begin{assumption} \label{ass_full_rank_normal}
    The set of nearly-active constraints 
    $I(\bx,\alpha)$~\eqref{eq_lincons_nearly_active} has at most 
    $n$ vectors, and the vectors $\{\ba_i : i\in I(\bx,\alpha)\}$ 
    are linearly independent.
\end{assumption}

Note that Assumption~\ref{ass_full_rank_normal} holds whenever all $\ba_i$s 
are linearly independent (in which case $m \le n$). Without loss of 
generality, and for simplicity, we assume that $I(\bx,\alpha)=\{1,\dots,q\}$ 
in the rest of this section.

\begin{lemma} \label{lem_lincons_tangent_gens}
    Suppose $\bx\in\Omega$, $\alpha>0$ and 
    Assumption~\ref{ass_full_rank_normal} holds. Then a set of generators for 
    $T_{\Omega(\bx,\alpha)}$ is the set of $2n-q$ vectors
    \begin{align*}
        \{-(A^{\dagger})^T \be_i : i=1,\ldots,q\} 
        \cup 
        \{\pm \bu_{q+1}, \ldots, \pm\bu_{n}\},
    \end{align*}
    where $A=[\ba_1 \cdots \ba_q]\in\R^{n\times q}$ and 
    $\bu_1,\ldots,\bu_{n}\in\R^n$ are the (orthonormal) left singular vectors 
    of $A$.
\end{lemma}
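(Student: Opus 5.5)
Write $K_1 := \{\by : A^T\by \le \bm{0}\}$ and $K_2 := \operatorname{cone}(\col(A))$, as in Lemma~\ref{lem_cone_technical}. By definition $T_{\Omega}(\bx,\alpha)=N_{\Omega}(\bx,\alpha)^{\circ}=K_2^{\circ}$, and by Lemma~\ref{lem_cone_technical}\ref{lem_cone_1c} this equals $K_1$. So the task is to show that
\[
\mathcal{G}:=\{-(A^{\dagger})^T\be_i : i=1,\ldots,q\}\cup\{\pm\bu_{q+1},\ldots,\pm\bu_n\}
\]
generates $K_1$, i.e.\ $\operatorname{cone}(\mathcal{G})=K_1$. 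I will prove both inclusions directly with linear algebra. By Assumption~\ref{ass_full_rank_normal}, $A$ has full column rank $q$. Hence $A^{\dagger}=(A^TA)^{-1}A^T$ and $A^{\dagger}A=I_q$. Moreover, $\bu_1,\ldots,\bu_q$ span $\operatorname{range}(A)$ and $\bu_{q+1},\ldots,\bu_n$ span $\operatorname{nul}(A^T)$.

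First inclusion, $\operatorname{cone}(\mathcal{G})\subseteq K_1$. It suffices to check each generator.
\begin{itemize}
\item For $\bw_i:=-(A^{\dagger})^T\be_i$ we have $A^T\bw_i=-(A^{\dagger}A)^T\be_i=-\be_i\le\bm{0}$.
\item For $j>q$ we have $A^T(\pm\bu_j)=\bm{0}$.
\end{itemize}
Since $K_1$ is a convex cone, every nonnegative combination of these generators lies in $K_1$.

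Second inclusion, $K_1\subseteq\operatorname{cone}(\mathcal{G})$. Take $\by\in K_1$ and decompose it orthogonally along $\operatorname{range}(A)\oplus\operatorname{nul}(A^T)$:
\[
\by = AA^{\dagger}\by + (I-AA^{\dagger})\by .
\]
For the range part, set $\bm{\lambda}:=-A^T\by\ge\bm{0}$. Then
\[
AA^{\dagger}\by = (A^{\dagger})^TA^T\by=-(A^{\dagger})^T\bm{\lambda}=\sum_{i=1}^q\lambda_i\bw_i ,
\]
using the symmetry of $AA^{\dagger}=A(A^TA)^{-1}A^T$. This is a nonnegative combination of the $\bw_i$.

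For the null-space part, $(I-AA^{\dagger})\by\in\operatorname{nul}(A^T)=\operatorname{span}\{\bu_{q+1},\ldots,\bu_n\}$, so it equals $\sum_{j>q}\mu_j\bu_j$ for some real $\mu_j$. Writing each term as $\mu_j^+\bu_j+\mu_j^-(-\bu_j)$ gives a nonnegative combination of $\pm\bu_j$.

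Adding the two parts shows $\by\in\operatorname{cone}(\mathcal{G})$.

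The set $\mathcal{G}$ contains exactly $q+2(n-q)=2n-q$ vectors. The only delicate point is the identity $AA^{\dagger}=(A^{\dagger})^TA^T$, together with the identification of $\operatorname{nul}(A^T)$ with the span of the trailing left singular vectors; both rely on $\operatorname{rank}(A)=q$, which Assumption~\ref{ass_full_rank_normal} guarantees. Alternatively, one could cite Lemma~\ref{lem_cone_technical}\ref{lem_cone_2b} with $B=[\bu_{q+1}\cdots\bu_n]$, but the direct argument above is shorter and self-contained.
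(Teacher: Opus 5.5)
Your proof is correct, but it takes a different route from the paper's.

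\textbf{The paper's route.} It stays at the level of cone duality. After identifying $T_{\Omega}(\bx,\alpha)$ with $K_1$, it uses $N_{\Omega}(\bx,\alpha)=T_{\Omega}(\bx,\alpha)^{\circ}$ together with Lemma~\ref{lem_cone_technical}\ref{lem_cone_2a}. This describes the normal cone by the inequalities $-A^{\dagger}\by\le\bm{0}$ and the equations $(I-AA^{\dagger})\by=\bm{0}$. It then takes the polar once more with Lemma~\ref{lem_cone_technical}\ref{lem_cone_2b} and reads off the generators: the columns of $-(A^{\dagger})^T$ plus any positive spanning set of $\operatorname{col}(I-AA^{\dagger})=\operatorname{col}(A)^{\perp}$. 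The only thing proved by hand is that $\{\pm\bu_{q+1},\ldots,\pm\bu_n\}$ positively spans $\operatorname{col}(A)^{\perp}$.

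\textbf{Your route.} You verify $\operatorname{cone}(\mathcal{G})=K_1$ directly by two inclusions.
\begin{itemize}
\item[(i)] It needs only the elementary half $K_2^{\circ}=K_1$ of Lemma~\ref{lem_cone_technical}\ref{lem_cone_1c}. The paper's identification of $N_{\Omega}(\bx,\alpha)$ with $K_1^{\circ}$ is the Farkas/bipolar direction, and it also relies on the external characterizations in parts~\ref{lem_cone_2a} and~\ref{lem_cone_2b}; you need none of these.
\item[(ii)] It produces explicit nonnegative coefficients $\bm{\lambda}=-A^T\by$ and $\mu_j=\bu_j^T\by$. The coefficient $-A^T\bv_1$ is exactly the $\hat{\bc}$ that the paper rederives later in the proof of Theorem~\ref{thm_lincons_pss}. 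So your argument makes that later step look natural rather than ad hoc.
\end{itemize}

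\textbf{Trade-off.} The paper's version is shorter on the page, given the citations. It also frames the result as an instance of a general formula for the polar of ``inequalities plus a subspace'', which makes it immediately visible that any positive spanning set of $\operatorname{col}(A)^{\perp}$ could replace the $\pm\bu_j$. Your argument shows the same thing just as easily: the null-space part of your decomposition uses nothing about the $\bu_j$ beyond their spanning $\operatorname{col}(A)^{\perp}$.
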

\begin{proof}
    By definition of $N_{\Omega}(\bx,\alpha)$ \eqref{eq_lincons_normal_cone} 
    and Lemma~\ref{lem_cone_technical}\ref{lem_cone_1c}, we may write 
    \begin{align*}
        T(\bx,\alpha) 
        = N(\bx,\alpha)^{\circ} 
        = \{ \by\in\R^n : \ba_i^T\by \leq 0, \:\: i=1,\ldots,q\},
    \end{align*}
    recalling that we are assuming $I(\bx,\alpha)=\{1,\ldots,q\}$s.
    Hence by Lemma~\ref{lem_cone_technical}\ref{lem_cone_2a},
    \begin{align*}
        N(\bx,\alpha) 
        = T(\bx,\alpha)^{\circ} 
        = \{\by\in\R^n : -A^{\dagger} \by \leq \bm{0} \:\: 
        \text{and} \: \: (I - A A^{\dagger})\by=\bm{0}\}.
    \end{align*}
    So, applying Lemma~\ref{lem_cone_technical}\ref{lem_cone_2b}, a 
    generating set for $T(\bx,\alpha) = N(\bx,\alpha)^{\circ}$ is the set of 
    columns of $-(A^{\dagger})^T$ together with any vectors that positively 
    span $\operatorname{col}(I-A A^{\dagger})$.

    It remains to show that $\{\pm \bu_{q+1}, \ldots, \pm\bu_{n}\}$ positively 
    spans $\operatorname{col}(I-A A^{\dagger})$. Since $A A^{\dagger}$ is the 
    orthogonal projector onto $\operatorname{col}(A)$, the matrix 
    $I-A A^{\dagger}$ is the orthogonal projector onto 
    $\operatorname{col}(A)^{\perp}$.
    Since $\operatorname{rank}(A)=q$ by Assumption~\ref{ass_full_rank_normal}, 
    the left singular vectors $\{\bu_1,\ldots,\bu_q\}$ form an orthonormal 
    basis for $\operatorname{col}(A)$, and $\{\bu_{q+1},\ldots,\bu_n\}$ is an 
    orthonormal basis for $\operatorname{col}(A)^{\perp}$.
    Thus $\{\pm\bu_{q+1},\ldots,\pm\bu_n\}$ positively span 
    $\operatorname{col}(A)^{\perp}$, as claimed.
\end{proof}


Given Lemma~\ref{lem_lincons_tangent_gens}, we consider the set of polling 
directions 
\begin{align}
    \mathcal{D} 
    = 
    \{\bd_1,\ldots,\bd_q\} 
    \cup \{-\alpha_1 \bd_1, \ldots, -\alpha_q \bd_q\} 
    \cup \{\pm \alpha \bu_{q+1}, \ldots, \pm \alpha \bu_n\}, 
    \label{eq_poll_set_lincons}
\end{align}
where $\bd_i := \frac{\alpha}{\|\hat{\bd}_i\|} \hat{\bd}_i$ for 
$\hat{\bd}_i := -(A^{\dagger})^T \be_i$, the constants $\alpha_i$ are the 
largest value in $[0,1]$ so that $\bx-\alpha_i \bd_i \in \Omega$, and 
$\bu_{q+1},\ldots,\bu_n$ are the last $n-q$ left singular vectors of $A$.
Note that $\hat{\bd}_i \neq \bm{0}$ since 
$\operatorname{rank}((A^{\dagger})^T) = \operatorname{rank}(A)=q$.

We first confirm that our polling set~\eqref{eq_poll_set_lincons} only 
comprises feasible points, and derive an explicit expression for the 
scaling factor $\alpha_i$ for the directions $-\bd_i$.

\begin{lemma} \label{lem_lincons_pss_feasible}
    Let Assumption~\ref{ass_full_rank_normal} hold for $\bx\in\Omega$ 
    and $\alpha>0$, and consider the polling set $\mathcal{D}$ defined by
    \eqref{eq_poll_set_lincons}.
    Then $\bx+\bd\in\Omega$ for $\bd\in\mathcal{D}$, and
    \begin{align}
        \alpha_i = \min\left(\frac{\|\hat{\bd}_i\| s_i}{\alpha}, 1\right), 
        \label{eq_linear_alpha_i}
    \end{align}
    for all $i=1,\ldots,q$, where 
    $s_i := b_i - \ba_i^T \bx \geq 0$ is the slack in the $i$-th constraint at 
    $\bx$.
\end{lemma}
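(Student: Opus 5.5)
The plan rests on one algebraic identity for the normalized generators, followed by a constraint-by-constraint check of feasibility. Since $\operatorname{rank}(A)=q$ by Assumption~\ref{ass_full_rank_normal}, we have $A^{\dagger}=(A^TA)^{-1}A^T$, so $A^{\dagger}A=I_q$ and hence $A^T(A^{\dagger})^T=I_q$. Reading this column by column gives $\ba_j^T\hat{\bd}_i=-\be_j^T\be_i=-\delta_{ij}$ for $i,j\in\{1,\ldots,q\}$, and therefore
\begin{align*}
\ba_j^T\bd_i=-\frac{\alpha}{\|\hat{\bd}_i\|}\delta_{ij}, \qquad i,j=1,\ldots,q.
\end{align*}
In the same way, $\bu_{q+1},\ldots,\bu_n$ span $\operatorname{col}(A)^{\perp}$, so $\ba_j^T\bu_l=0$ for $j\le q<l$. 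The first thing I would do is also record that every element of $\mathcal{D}$ has norm at most $\alpha$. Indeed $\|\bd_i\|=\alpha$, $\|\alpha\bu_l\|=\alpha$, and $\|\alpha_i\bd_i\|\le\alpha$ because $\alpha_i\in[0,1]$.

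Next I would treat the nearly active constraints $j\in I(\bx,\alpha)=\{1,\ldots,q\}$, using the identities above.
\begin{itemize}
\item Moving along $\bd_i$ changes $\ba_j^T\bx$ by $-\tfrac{\alpha}{\|\hat{\bd}_i\|}\delta_{ij}\le 0$, so feasibility for constraint $j$ is preserved.
\item Moving along $\pm\alpha\bu_l$ leaves $\ba_j^T\bx$ unchanged.
\item Moving along $-t\bd_i$ with $t\ge0$ leaves $\ba_j^T\bx$ unchanged for $j\neq i$. For $j=i$ it increases $\ba_i^T\bx$ by $t\alpha/\|\hat{\bd}_i\|$, and this stays feasible exactly when $t\le \|\hat{\bd}_i\|s_i/\alpha$.
\end{itemize}

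For the constraints $j\notin I(\bx,\alpha)$, I would argue that they cannot be violated by any step of norm at most $\alpha$. By Cauchy--Schwarz, such a step increases $\ba_j^T\bx$ by at most $\alpha\|\ba_j\|$. The slack $s_j$ exceeds this, using the interpretation stated in the text, namely that non-nearly-active constraints have boundary at distance greater than $\alpha$, i.e.\ $s_j>\alpha\|\ba_j\|$. If one insists on the literal threshold $\alpha\|\ba_j\|^2$ in \eqref{eq_lincons_nearly_active}, the same conclusion holds as soon as $\|\ba_j\|\ge 1$, for instance with normalized constraint rows. Since every direction in $\mathcal{D}$ has norm at most $\alpha$, these constraints impose no restriction on any direction, including $-t\bd_i$ for all $t\in[0,1]$.

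Combining the two cases, the feasible values of $t\in[0,1]$ for the direction $-t\bd_i$ are exactly those with $t\le \|\hat{\bd}_i\|s_i/\alpha$. Hence the largest such value is $\alpha_i=\min(\|\hat{\bd}_i\|s_i/\alpha,1)$, which is \eqref{eq_linear_alpha_i}, and all elements of $\mathcal{D}$ are feasible steps from $\bx$. The main obstacle is the treatment of the non-nearly-active constraints: it depends on matching the scaling in definition \eqref{eq_lincons_nearly_active} to a step-length bound of $\alpha$. I would state that matching explicitly, via the distance interpretation or normalized $\ba_j$, rather than leave it implicit.
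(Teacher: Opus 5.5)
Your proof is correct and follows essentially the same route as the paper: the identity $\ba_j^T\bd_i = -\tfrac{\alpha}{\|\hat{\bd}_i\|}\delta_{ij}$ obtained from $A^{\dagger}A=I$, the orthogonality of $\bu_{q+1},\ldots,\bu_n$ to every $\ba_j$, and the bound $\|\bd\|\le\alpha$ to dismiss the constraints outside $I(\bx,\alpha)$. Where you improve on the paper, first, you state explicitly that this last step needs $s_j>\alpha\|\ba_j\|$. The literal threshold $\alpha\|\ba_j\|^2$ in \eqref{eq_lincons_nearly_active} guarantees this when $\|\ba_j\|\ge 1$ but not in general, and the paper passes over the point by citing its ``boundary intersects $B(\bx,\alpha)$'' description. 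Second, your ``exactly when'' characterization of the admissible $t$ gives \eqref{eq_linear_alpha_i} slightly more cleanly than the paper's necessary-condition argument.
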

\begin{proof}
    We first note that $\|\bd\| \leq \alpha$ for all $\bd\in\mathcal{D}$ by 
    construction. So, since $\bx\in\Omega$, all points $\bx+\bd$ are feasible 
    with respect to any constraints not in $I(\bx,\alpha)$ (which is defined 
    to be the set of constraints whose boundaries intersect $B(\bx,\alpha)$).
    Hence we only need to consider feasibility with respect to the constraints 
    $\ba_j^T \bx \leq b_j$ for $j=1,\ldots,q$.
    
    For all $i,j=1,\ldots,q$, we observe
    \begin{align}
        \bd_i^T \ba_j 
        = 
        -\frac{\alpha}{\|\hat{\bd}_i\|} ((A^{\dagger})^T \be_i)^T \ba_j 
        = -\frac{\alpha}{\|\hat{\bd}_i\|} \be_i^T A^{\dagger} A \be_j 
        = \begin{cases} 
        	-\frac{\alpha}{\|\hat{\bd}_i\|}, & i=j, \\ 
        	0, & i\neq j, 
        \end{cases} 
        \label{eq_lincons_orthogonality}
    \end{align}
    where the last equality follows from $A^{\dagger} A=I$ (since $A$ has full 
    column rank, Assumption~\ref{ass_full_rank_normal}). Thus, for all 
    $i,j=1,\ldots,q$ we have $\bd_i^T \ba_j \leq 0$ and so
    \begin{align*}
        \ba_j^T (\bx + \bd_i) \leq \ba_j^T \bx \leq b_j,
    \end{align*}
    where the last inequality follows from $\bx\in\Omega$.
    Hence $\bx+\bd_i\in \Omega$ for all $i=1,\ldots,q$.
    
    By construction of $\alpha_i$, we automatically have 
    $\bx-\alpha_i \bd_i\in\Omega$ for all $i=1,\ldots,q$.
    In particular, this means that $\ba_j^T (\bx - \alpha_i \bd_i) \leq b_j$, 
    or
    \begin{align*}
        s_j 
        = b_j - \ba_j^T \bx 
        \geq -\alpha_i \ba_j^T \bd_i 
        = 
        \begin{cases} 
        	\frac{\alpha_i \alpha}{\|\hat{\bd}_i\|}, & i=j, \\ 
        	0, & i\neq j, 
        \end{cases}
    \end{align*}
    from \eqref{eq_lincons_orthogonality}. 
    Hence, only the $i$-th constraint affects the feasibility of 
    $\bx-\alpha_i \bd_i$, which gives us \eqref{eq_linear_alpha_i} (noting 
    that $\alpha_i \leq 1$ by construction).

    Lastly, since 
    $\operatorname{col}(A)=\operatorname{span}(\{\bu_1,\ldots,\bu_q\})$ and 
    $\{\bu_1,\ldots,\bu_n\}$ is an orthonormal set, each of the vectors 
    $\bu_{q+1},\ldots,\bu_n$ are orthogonal to all of $\ba_1,\ldots,\ba_q$, 
    and so
    \begin{align*}
        \ba_j^T (\bx \pm \alpha \bu_i) = \ba_j^T \bx \leq b_j,
    \end{align*}
    for all $j=1,\ldots,q$ and all $i=q+1,\ldots,n$.
    Hence $\bx\pm \alpha\bu_i \in\Omega$ for all $i=q+1,\ldots,n$.
\end{proof}

We now show that our polling set forms a $\Lambda$-PSS for 
$B(\bx,\alpha)\cap \Omega$. Since $\|\bd\| \leq \alpha$ for all 
$\bd\in\mathcal{D}$, this implies that $\mathcal{D}$ satisfies 
Assumption~\ref{ass_poll_generators_lincons} with $d_{\max}=1$.

\begin{theorem} \label{thm_lincons_pss}
	Let Assumption~\ref{ass_full_rank_normal} hold for $\bx\in\Omega$ 
    and $\alpha>0$, and consider the polling set $\mathcal{D}$ defined by
    \eqref{eq_poll_set_lincons}. Then $\mathcal{D}$ is a $\Lambda$-PSS for 
    $B(\bx,\alpha)\cap \Omega$ with 
    \begin{equation}
    \label{eq_lincons_pss}
    	\Lambda 
        \; = \; 
        q \kappa(A) + \sqrt{n-q}.
    \end{equation}
\end{theorem}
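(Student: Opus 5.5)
The plan is to decompose an arbitrary admissible $\bv$ explicitly along the polling directions, using the orthogonal splitting $\R^n = \col(A) \oplus \col(A)^{\perp}$, and to bound the coefficients constraint by constraint. Feasibility of $\bx+\bd$ for every $\bd\in\calD$ is already given by Lemma~\ref{lem_lincons_pss_feasible}. So it remains to fix $\bv$ with $\bx+\bv\in\Omega$ and $\|\bv\|\le\alpha$, and to exhibit nonnegative coefficients with $\ell_1$-norm at most $q\kappa(A)+\sqrt{n-q}$. Here $\kappa(A)=\|A\|\,\|A^{\dagger}\|\geq 1$.

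\emph{Step 1: decomposition.} Write $\bv = AA^{\dagger}\bv + (I-AA^{\dagger})\bv$. For the second term, use the orthonormal basis $\bu_{q+1},\ldots,\bu_n$ of $\col(A)^{\perp}$ and write it as $\sum_{j>q} (\bu_j^T\bv)\bu_j$. Each summand is expressed with the direction $\pm\alpha\bu_j$ (sign matching that of $\bu_j^T\bv$) and nonnegative coefficient $|\bu_j^T\bv|/\alpha$. By Cauchy--Schwarz, the sum of these coefficients is at most $\sqrt{n-q}\,\|(I-AA^{\dagger})\bv\|/\alpha\le\sqrt{n-q}$.

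For the first term, note that $(A^{\dagger})^T=A(A^TA)^{-1}$, so the vectors $\hat{\bd}_1,\ldots,\hat{\bd}_q$ form a basis of $\col(A)$. By the orthogonality relation~\eqref{eq_lincons_orthogonality}, $A^T\hat{\bd}_i=-\be_i$. Hence
\[
AA^{\dagger}\bv=\sum_{i=1}^q w_i\hat{\bd}_i, \qquad w_i=-\ba_i^T\bv .
\]

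\emph{Step 2: bounding each constraint's contribution.} Treat each $i\le q$ according to the sign of $\ba_i^T\bv$.

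If $\ba_i^T\bv\le 0$, use $\bd_i$. The coefficient is $c_i = -\ba_i^T\bv\,\|\hat{\bd}_i\|/\alpha \le \|\ba_i\|\,\|\hat{\bd}_i\|$, using $\|\bv\|\le\alpha$.

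If $\ba_i^T\bv>0$, use $-\alpha_i\bd_i$. Feasibility of $\bx+\bv$ gives $0<\ba_i^T\bv\le s_i$, so $s_i>0$ and therefore $\alpha_i>0$ by~\eqref{eq_linear_alpha_i}. The coefficient is $c_{-i}=\ba_i^T\bv\,\|\hat{\bd}_i\|/(\alpha\alpha_i)$. Two sub-cases arise from~\eqref{eq_linear_alpha_i}:
\begin{itemize}[label={}]
\item If $\alpha_i=\|\hat{\bd}_i\|s_i/\alpha$, then $c_{-i}=\ba_i^T\bv/s_i\le 1$.
\item If $\alpha_i=1$, then $c_{-i}\le \|\ba_i\|\,\|\hat{\bd}_i\|$, exactly as in the first case.
\end{itemize}

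Finally, $\|\ba_i\|=\|A\be_i\|\le\|A\|$ and $\|\hat{\bd}_i\|=\|(A^{\dagger})^T\be_i\|\le\|A^{\dagger}\|$. So every constraint contributes at most $\max(1,\kappa(A))=\kappa(A)$, and only one of $c_i, c_{-i}$ is nonzero for each $i$.

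\emph{Step 3: conclusion.} Summing the two parts gives $\|\bc(\bv)\|_1\le q\kappa(A)+\sqrt{n-q}$. All coefficients are nonnegative by construction, which proves the claim.

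The main obstacle is the case $\ba_i^T\bv>0$, because the scaled direction $-\alpha_i\bd_i$ can be arbitrarily short. The key point is that the slack bound $\ba_i^T\bv\le s_i$ exactly compensates the small scaling. This is what makes the construction from $-T_{\Omega}(\bx,\alpha)$ succeed where normal-cone generators fail, as in Example~\ref{ex_pyramid}. A secondary point to verify carefully is that $\col((A^{\dagger})^T)=\col(A)$, so that the $\hat{\bd}_i$ really span the first component.
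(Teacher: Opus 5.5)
Your proposal is correct and follows essentially the same route as the paper. It uses the same orthogonal split into $\col(A)$ and $\col(A)^{\perp}$, the same $\sqrt{n-q}$ bound on the $\pm\alpha\bu_j$ part, the same expansion of the $\col(A)$ component in the $\hat{\bd}_i$ with coefficients $-\ba_i^T\bv$, and the same switch to $-\alpha_i\bd_i$ when $\ba_i^T\bv>0$ via the slack bound $\ba_i^T\bv\le s_i$ and the two cases of~\eqref{eq_linear_alpha_i}. The differences are cosmetic: you split on the sign of $\ba_i^T\bv$ rather than of the paper's $c_i$ (which is equivalent), and your intermediate bound $\|\ba_i\|\,\|\hat{\bd}_i\|$ is slightly sharper before you relax it to $\kappa(A)$.
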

\begin{proof}
	Notice first that Lemma~\ref{lem_lincons_pss_feasible} ensures 
	$\bx+\bd\in\Omega$ for all $\bd\in\mathcal{D}$.
    
    Now, fix $\bv\in\R^n$ with $\bx+\bv\in\Omega$ and $\|\bv\| \leq \alpha$. 
    We write the orthogonal decomposition $\bv=\bv_1+\bv_2$ where 
    $\bv_1 = \proj_{\operatorname{col}(A)}(\bv)$ and 
    $\bv_2=\proj_{\operatorname{col}(A)^{\perp}}(\bv)$.
    Defining the slack variables $s_j$ as in 
    Lemma~\ref{lem_lincons_pss_feasible}), it follows that 
    $\ba_j^T \bv = \ba_j^T \bv_1 \leq s_j$ for all $j=1,\ldots,q$. 
    Moreover, since $\|\bv_1\|^2 + \|\bv_2\|^2 = \|\bv\|^2 \leq \alpha^2$, we 
    have $\max\{\|\bv_1\|,\|\bv_2\|\} \le \alpha$.
    
    We begin by consider $\bv_2$. The definition of
    $\operatorname{col}(A)^{\perp} = 
    \operatorname{span}(\{\bu_{q+1},\ldots,\bu_n\})$ gives
    \begin{align*}
        \bv_2 
        = \sum_{i=q+1}^{n} (\bv_2^T \bu_i) \bu_i 
        = \sum_{i=q+1}^{n} \frac{\bv_2^T \bu_i}{\alpha} \: \alpha \bu_i 
        = \sum_{i=q+1}^{n} \frac{|\bv_2^T \bu_i|}{\alpha} \: (\pm \alpha \bu_i).
    \end{align*}
    Since $|\bv_2^T \bu_i| \leq \|\bv_2\| \: \|\bu_i\| \leq \alpha$, we can 
    therefore write
    \begin{align*}
        \bv_2 = \sum_{i=q+1}^{n} c_i (\alpha \bu_i) + c_{-i} (-\alpha \bu_i),
    \end{align*}
    where $c_i=$ and $c_{-i}$ are defined so the proof of 
    Theorem~\ref{thm_pss_bounds}. Since the vectors $\pm \bu_i$ are feasible 
    for $\alpha$, applying the reasoning of Theorem~\ref{thm_pss_bounds} 
    guarantees that 
    \begin{equation}
    \label{eq:sumciv2}
    	\sum_{i=q+1}^n (c_i+c_{-i}) \le \sqrt{n-q}.
    \end{equation}

    Consider now the vector 
    $\bv_1\in \operatorname{col}(A) = \operatorname{col}((A^{\dagger})^T)$.
   	Since the linear system $-(A^{\dagger})^T \hat{\bc}=\bv_1$ is consistent, 
   	the vector 
    $\hat{\bc}=(-(A^{\dagger})^T)^{\dagger}\bv_1 = -A^T \bv_1 \in \R^q$ is its 
    minimal norm solution. It follows that
    \begin{align}
        \bv_1 
        = \sum_{i=1}^{q} \hat{c}_i \hat{\bd}_i 
        = \sum_{i=1}^{q} c_i \bd_i, 
        \qquad \text{where} \quad 
        c_i := \frac{\hat{c}_i \|\hat{\bd}_i\|}{\alpha}, 
        \label{eq_lincons_pss_tmp1}
    \end{align}
    with $\hat{c}_i,c_i\in\R$, possibly negative.
    Since $\hat{c}_i = [A^T \bv_1]_i$ and 
    $\hat{\bd}_i = -(A^{\dagger})^T \be_i$, we have
    \begin{align}
        |c_i| 
        \le 
        \frac{\|A^T \bv_1\|_{\infty} \|(A^{\dagger})^T\|}{\alpha} 
        \le 
        \frac{\|A^T \bv_1\| \|(A^{\dagger})^T\|}{\alpha} \leq \kappa(A), 
        \label{eq_lincons_pss_tmp2}
    \end{align}
    where the last inequality uses $\|\bv_1\| \leq \alpha$.

    From above, we know that each of $\bu_{q+1},\ldots,\bu_n$ are orthogonal 
    to all of $\ba_1,\ldots,\ba_q$, and so $\bv_2$ is orthogonal to all of 
    $\ba_1,\ldots,\ba_q$. Hence,
    \begin{align*}
        \ba_j^T \bv_1 = \ba_j^T \bv \leq s_j
    \end{align*}
    for all $j=1,\ldots,q$.
    From \eqref{eq_lincons_orthogonality} and \eqref{eq_lincons_pss_tmp1}, 
    we get
    \begin{align}
        s_j 
        \ge \ba_j^T \bv_1 
        = \sum_{i=1}^{q} c_i \bd_i^T \ba_j 
        = -\frac{\alpha c_j}{\|\hat{\bd}_j\|}, 
        \label{eq_lincons_pss_tmp3}
    \end{align}
    for all $j=1,\ldots,q$. This implies that $c_i\ge 0$ whenever $s_i=0$ 
    (i.e.~$\bx$ lies on the boundary of the $i$-th constraint). Conversely, if 
    $c_i<0$ then $s_i > 0$ and so $\alpha_i > 0$ 
    from~\eqref{eq_linear_alpha_i} (recalling that 
    $\hat{\bd}_i \neq \bm{0}$).

    Since we do not know which (if any) $c_i$ in~\eqref{eq_lincons_pss_tmp1} 
    are non-negative, assume without loss of generality that 
    $c_1,\ldots,c_k<0$ and $c_{k+1},\ldots,c_q \geq 0$ for some 
    $k\in\{0,\ldots,q\}$. In that case, $\alpha_1,\ldots,\alpha_k>0$, and so 
    we can write
    \begin{align*}
        \bv_1 
        = 
        \sum_{i=1}^{k} \frac{|c_i|}{\alpha_i} (-\alpha_i \bd_i) 
        + \sum_{i=k+1}^{q} |c_i| \bd_i.
    \end{align*}
    This expresses $\bv_1$ as a non-negative combination of polling 
    directions. We already have an upper bound on 
    $|c_i|$~\eqref{eq_lincons_pss_tmp2}, so it remains to bound 
    $|c_i|/\alpha_i$ for $i=1,\ldots,k$. For any $i=1,\ldots,k$, we 
    use~\eqref{eq_lincons_orthogonality} to get
    \begin{align*}
        \ba_i^T \bv_1 
        = \frac{|c_i|}{\alpha_i} (-\alpha_i \ba_i^T \bd_i) 
        = \frac{|c_i|}{\alpha_i} \cdot \frac{\alpha_i \alpha}{\|\hat{\bd}_i\|} 
        \quad \Leftrightarrow \quad 
        \frac{|c_i|}{\alpha_i} 
        = \frac{\ba_i^T \bv_1 \|\hat{\bd}_i\|}{\alpha_i \alpha} 
        \le \frac{s_i \|\hat{\bd}_i\|}{\alpha_i \alpha},
    \end{align*}
    where the last inequality uses $\ba_i^T \bv_1 \le s_i$ 
    from~\eqref{eq_lincons_pss_tmp3}. By definition of $\alpha_i$ 
    in~\eqref{eq_linear_alpha_i}, it follows that
    \begin{align*}
        \frac{|c_i|}{\alpha_i} 
        \le \frac{s_i \|\hat{\bd}_i\|}{\alpha_i \alpha} 
        = 1
        \quad \mbox{if}\ \alpha_i=\frac{s_i \|\hat{\bd}_i\|}{\alpha}
        \qquad \mbox{and} \qquad 
        \frac{|c_i|}{\alpha_i} = |c_i| \le \kappa(A) 
        \quad \mbox{if}\ \alpha_i=1,
    \end{align*}
    where the second inequality uses~\eqref{eq_lincons_pss_tmp2}.
	Since $\kappa(A) \ge 1$ by definition, summing over $i=1,\dots,k$ yields
	\begin{equation}
	\label{eq:cPicMisum1k}    
    	\sum_{i=1}^k \frac{|c_i|}{\alpha_i}
    	\le k\,\kappa(A). 
    \end{equation}
    Finally, for $i=k+1,\dots,q$, using~\eqref{eq_lincons_pss_tmp2} gives
    \begin{equation}
    \label{eq:cPicMisumkp1q}
    	\sum_{i=k+1}^q |c_i| \le (q-k) \kappa(A).
    \end{equation}
    Overall, we have decomposed $\bv$ into
    \begin{align*}
        \bv 
        = \bv_1 + \bv_2 
        = \sum_{i=1}^{k} \frac{|c_i|}{\alpha_i} (-\alpha_i \bd_i) 
        + \sum_{i=k+1}^{q} |c_i| \bd_i 
        + \sum_{i=q+1}^{n} \left[c_i (\alpha \bu_i) 
        + c_{-i} (-\alpha \bu_i)\right],
    \end{align*}
    where all coefficients are non-negative, and their sum is bounded by
    \begin{align*}
        \sum_{i=1}^{k} \frac{|c_i|}{\alpha_i} 
        + \sum_{i=k+1}^{q} |c_i| 
        + \sum_{i=q+1}^{n} (c_i + c_{-i}) 
        \le 
        k \kappa(A) + (q-k) \kappa(A) + \sqrt{n-q}
        =
        q \kappa(A) + \sqrt{n-q},
    \end{align*}
    which gives the value in~\eqref{eq_lincons_pss}.
\end{proof}

The above bound gives us a value of $\Lambda$ depending on 
$A=[\ba_1 \cdots \ba_q]$, and so this value depends on the particular set of 
nearly active constraints, $I(\bx,\alpha)$.  In particular, when there are no 
active constraints ($q=0$), we recover the unconstrained value $\Lambda = \sqrt{n}$.
We now adjust the bound to make it 
completely independent of $\bx$ and $\alpha$.

\begin{corollary}
\label{coro:bndkappanC}
    Suppose the assumptions of Theorem~\ref{thm_lincons_pss} hold.
    Then, there exists a constant $C>0$ depending only on the constraint 
    vectors $\ba_1,\ldots,\ba_m$ \eqref{eq_linear_cons} such that 
    $\mathcal{D}$ is a $\Lambda$-PSS for $B(\bx,\alpha)\cap \Omega$ where 
    $\Lambda \le nC$.
\end{corollary}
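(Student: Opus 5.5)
Starting from Theorem~\ref{thm_lincons_pss}, $\mathcal{D}$ is a $\Lambda$-PSS for $B(\bx,\alpha)\cap\Omega$ with $\Lambda = q\kappa(A) + \sqrt{n-q}$, where $A=[\ba_i]_{i\in I(\bx,\alpha)}$. The only quantities depending on $\bx$ and $\alpha$ are $q$ and the condition number $\kappa(A)$, through the index set $I(\bx,\alpha)$. The plan is to bound each uniformly over all index sets that can occur, and then absorb everything into a constant times $n$.

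First, by Assumption~\ref{ass_full_rank_normal} we have $q \le n$, and trivially $\sqrt{n-q} \le \sqrt{n} \le n$. Next, $I(\bx,\alpha)$ is always a subset of $\{1,\ldots,m\}$, and under Assumption~\ref{ass_full_rank_normal} the corresponding columns are linearly independent. So I would define
\[
C_0 := \max\left\{ \kappa(A_S) : S\subseteq\{1,\ldots,m\},\ S\neq\emptyset,\ \{\ba_i\}_{i\in S} \text{ linearly independent} \right\},
\]
where $A_S := [\ba_i]_{i\in S}$ and $\kappa(A_S)=\|A_S\|\,\|A_S^{\dagger}\|$. This is a maximum over a finite family of subsets. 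Each $\kappa(A_S)$ is finite because $A_S$ has full column rank, so its smallest singular value is positive and $\|A_S^\dagger\| = 1/\sigma_{\min}(A_S) < \infty$. Hence $C_0<\infty$, and $C_0$ depends only on $\ba_1,\ldots,\ba_m$. If no nonempty independent subset exists (for instance the unconstrained encoding $\ba_1=\bm{0}$), then set $C_0:=1$; the case $q=0$ needs no $\kappa$ term at all. Note that $C_0\ge 1$.

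Combining, $\Lambda = q\kappa(A) + \sqrt{n-q} \le nC_0 + n = n(C_0+1)$, so $C := C_0+1$ works. The only step needing care is justifying finiteness of $C_0$ and independence from $(\bx,\alpha)$. The key observation is that the relevant matrices range over finitely many column subsets of the fixed data, each with full column rank, rather than over a continuum. If one preferred a sharper statement $\Lambda \le nC$ with $C$ close to $\max\kappa$, one could instead use $\sqrt{n-q}\le n-q$ together with $\kappa\ge 1$ to get $\Lambda\le q C_0 + (n-q) C_0 = nC_0$. This tighter version is what I would actually write.
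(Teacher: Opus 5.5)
Your proposal is correct and follows the paper's proof. You bound $q\kappa(A)+\sqrt{n-q}\le n\kappa(A)$ using $\kappa(A)\ge 1$, which in your tighter version is exactly the paper's inequality, and then take the maximum of $\kappa(A_I)$ over the finitely many admissible index sets $I\subseteq\{1,\ldots,m\}$, while also filling in details the paper leaves implicit (finiteness via full column rank, and the case $q=0$).
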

\begin{proof}
	For any $(\bx,\alpha)$, we can apply Theorem~\ref{thm_lincons_pss} with 
	a matrix $A_I=\{\ba_i : i \in I\}$ to get 
    \begin{align*}
    	\Lambda = |I|\kappa(A_I) + 
    	\sqrt{n-|I|} \le n \kappa(A_I),
    \end{align*}
    where the second inequality comes from taking a maximum over 
    $|I|\in\{0,\ldots,n\}$ with $\kappa(A_I)\geq 1$.
    Taking the maximum over the finitely many possibilities for $I$ then 
    yields the desired result.
\end{proof}

Since $\mathcal{D}$ has $2n$ vectors with $\Lambda=\bigO(n)$, 
Corollary~\ref{coro:bndkappanC} together with Theorem~\ref{thm_wcc_convex_new} 
imply that Algorithm~\ref{alg_ds_basic_new} has a worst-case complexity of 
$\bigO(n^2 \epsilon^{-2})$ iterations and $\bigO(n^3 \epsilon^{-2})$ 
evaluations to achieve first-order optimality $\pi(\bx_k) \leq \epsilon$. To 
the best of our knowledge, this is the first bound with quantifiable dependency 
on $n$ for direct search with linear inequality constraints other than 
bounds.

\subsection{Handling general linear constraints in practice} 
\label{sec_practical_poll_set}

In the previous section, we provided explicit constructions for polling sets 
for bound-constrained problems and linearly constrained 
problems~\eqref{eq_linear_cons} satisfying 
Assumption~\ref{ass_full_rank_normal}. Assuming Asssumption~\ref{ass_omega} 
but not Assumption~\ref{ass_full_rank_normal}, we do not know of a polling set 
construction with guaranteed $\Lambda$-PSS properties. However, various 
algorithms exist for constructing a (minimal) set of generators for a general 
cone expressed as linear inequalities, such as the double description 
method~\cite{Fukuda1996}. In practice, we can use such a technique to 
construct a set of generators for $T_{\Omega}(\bx,\alpha)$ for any set of 
linear inequality constraints.

To this end, a practical polling set construction method given $\bx\in\Omega$ 
and $\alpha>0$ consists in the following steps:
\begin{enumerate}
    \item If $I(\bx,\alpha)=\emptyset$, take 
    $\mathcal{D} = \{\pm \be_1, \ldots, \pm \be_n\}$ (as in the unconstrained 
    case).
    \item If Assumption~\ref{ass_full_rank_normal} holds, take $\mathcal{D}$ 
    as in \eqref{eq_poll_set_lincons}.
    \item Otherwise, construct a set of generators $\mathcal{G}$ for 
    $T_{\Omega}(\bx,\alpha)$.
    \begin{enumerate}
        \item If $\mathcal{G}\neq\emptyset$, set 
        $\mathcal{D} = \{\frac{\alpha}{\|\bd\|} \bd : \bd \in \mathcal{G}\} 
        \cup \{-\alpha_d \bd : \bd \in \mathcal{G}\}$, where $\alpha_x$ is 
        the largest value in $[0,\alpha]$ such that 
        $\bx-\alpha_d \bd \in \Omega$;
        \item Otherwise, if $\mathcal{G} = \emptyset$ 
        (i.e.~$T_{\Omega}(\bx,\alpha)=\{\bm{0}\})$ then take $\mathcal{D}$ to 
        be the generators of $N_{\Omega}(\bx,\alpha)$. 
        More precisely, set 
        $\mathcal{D} = \{\alpha_i \ba_i : i\in I(\bx,\alpha)\}$, where 
        $\alpha_i\geq 0$ is the largest value such that the polled point is 
        feasible and in $B(\bx,\alpha)$.
        Since $N_{\Omega}(\bx,\alpha)=\R^n$ in this case, this set is a PSS 
        for $\R^n$.
        \item If $\operatorname{span}(\mathcal{D}) \neq \R^n$ (linear span) 
        for $\mathcal{D}$ from 3(a), then append to $\mathcal{D}$ a polling 
        set for $\operatorname{nul}(\mathcal{G})\cap \Omega$ constructed 
        recursively using this approach.
    \end{enumerate}
\end{enumerate}

Although this approach is recursive (in the last step), it always terminates 
in finite time, since every recursive call operates on a proper subspace of 
its parent call. Note that if $\operatorname{nul}(\mathcal{G})$ is 
one-dimensional, then we just take positive and negative steps (scaled to 
ensure feasibility and length at most $\alpha$) along this one direction to 
complete the recursive step.

\section{Numerical Experiments} 
\label{sec_numerics}

In this section, we compare the strategy described in 
Section~\ref{sec_poll_sets} with classical direct-search variants tailored to 
bound and linearly constrained problems. In addition to comparing these 
methods on a standard optimization benchmark, we also evaluate the quality of 
polling sets when generated as $\Lambda$-PSSs.

\subsection{Implementation Details}
\label{sec_implementation}

Our implementation relies on the \texttt{directsearch} \cite{Roberts2023} 
Python package\footnote{\url{https://github.com/lindonroberts/directsearch}}, 
which we augment with three different polling techniques to handle linear 
inequality constraints. More precisely, we implement 
Algorithm~\ref{alg_ds_basic_new} with three choices of polling sets:
\begin{itemize}
    \item Tangent generators: At iteration $k$, $\mathcal{D}_k$ is the set of 
    generators of the tangent cone, scaled to have length $\alpha_k$. If this 
    set is empty (i.e.~$T_{\Omega}(\bx_k,\alpha_k)=\{\bm{0}\}$) then we use 
    the scaled generators of the normal cone, as in step 3(b) of 
    Section~\ref{sec_practical_poll_set}.
    \item Tangent and normal generators: At iteration $k$, $\mathcal{D}_k$ is 
    the set of generators of the tangent cone, scaled to have length 
    $\alpha_k$, together with the nearly active constraints normals, scaled to 
    be feasible and have length at most $\alpha_k$ (as in step 3(b) of 
    Section~\ref{sec_practical_poll_set}). This follows the heuristic approach 
    from Lewis et al.~\cite{Lewis2007}.
    \item Full $\Lambda$-PSS: the full polling set generation procedure 
    described in Section~\ref{sec_practical_poll_set}. The extra `negative 
    directions' (i.e.~$-\alpha_d \bd$ for $\bd\in\mathcal{G}$) are only polled 
    after the directions in the tangent cone.
\end{itemize}

All variants consider the same generators of the tangent cone, and the 
implementation always polls tangent directions first. We use opportunistic
polling\footnote{That is, the first $\bd_k\in\mathcal{D}_k$ satisfying the 
sufficient decrease condition is accepted.} and we replace the quantity 
$\tfrac{\sigma}{2}\alpha_k^2$ in~\eqref{eq_sufficient_decrease} 
(resp.~\eqref{eq_sufficient_decrease_new}) with the slightly modified 
quantity $\min(\epsilon, \epsilon \alpha_k^2)$ where $\epsilon=10^{-5}$. 
We update $\alpha_k$ with $\gammadec=0.5$, $\gammainc=2$ and with $\alpha_k$ 
capped at $\alpha_{\max}=10^3$. We terminate when 
$\alpha_k \leq \alpha_{\min}=10^{-6}$ or if a budget of $200(n+1)$ objective 
evaluations is reached for an $n$-dimensional problem. The initial stepsize 
was chosen as $\alpha_0 = \max(\alpha_{\min}, 
\max(0.1\max(\|\bx_0\|_{\infty}, 1), \alpha_{\max}))$.

For this implementation, we test the two polling set constructions on a 
collection of 122 problems from the CUTEst collection~\cite{Gould2015,
Fowkes2022}. These problems are mostly low-dimensional, with dimensions 
between $1$ and $51$. Bound constraints appear in 77 problems, while 45 
problems possess general linear inequality constraints. This collection of 
test problems is based on the problems used for numerical experiments in 
Gratton et al.~\cite{Gratton2019}, and a full list of problems is given in 
Appendix~\ref{app_test_problems}. Where the starting point $\bx_0$ provided 
by CUTEst is not feasible, we replace it with $\proj_{\Omega}(\bx_0)$, to 
ensure the starting point is feasible.

\subsection{Main Results}
\label{sec_main_resnum}

We report our comparison using data and performance profiles~\cite{More2009,
Dolan2002}. Specifically, for each solver $\mathcal{S}$ and problem 
$\mathcal{P}$ we calculate the number of evaluations required for the problem 
to be `solved' as
\begin{align}
    N(\mathcal{S},\mathcal{P},\tau)
    := 
    \text{\# obj.~evals.~to find $\bx\in\Omega$ with 
    $f(\bx) \leq f_{\min} + \tau (f(\bx_0) - f_{\min})$}, 
    \label{eq_numerics_solved}
\end{align}
where $\tau\ll 1$ is an accuracy parameter, and $f_{\min}$ is the best 
(feasible) objective value found by any solver for the given problem. We 
define $N(\mathcal{S},\mathcal{P},\tau) = \infty$ (i.e.~solver $\mathcal{S}$ 
never solved problem $\mathcal{P}$) if the solver never found a feasible point 
with sufficiently small objective value. Data profiles measure the proportion 
of problems $\mathcal{P}$ `solved' by a solver $\mathcal{S}$ (in the sense of 
\eqref{eq_numerics_solved}) within a given number of evaluations $c(n+1)$ for 
some constant $c>0$. Performance profiles measure the proportion of problems 
solved within some constant of the fastest solver for that problem, 
i.e.~$N(\mathcal{S},\mathcal{P},\tau) 
\leq c \min_{\mathcal{S}'} N(\mathcal{S}', \mathcal{P}, \tau)$ for some 
constant $c\geq 1$.

\begin{figure}[tb]
  \centering
  \begin{subfigure}[b]{0.48\textwidth}
    \includegraphics[width=\textwidth]{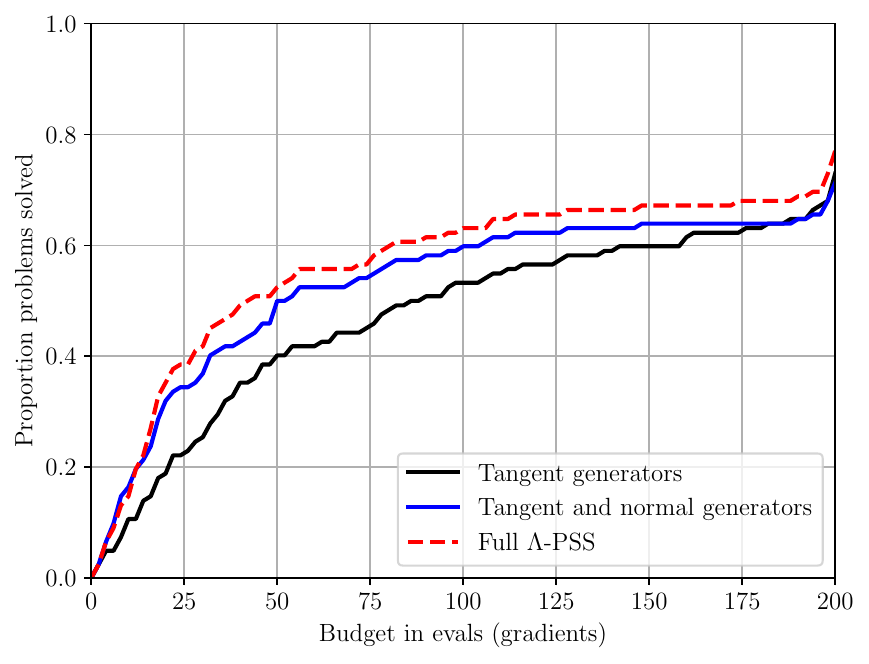}
    \caption{Data profile, $\tau=10^{-3}$}
  \end{subfigure}
  ~
  \begin{subfigure}[b]{0.48\textwidth}
    \includegraphics[width=\textwidth]{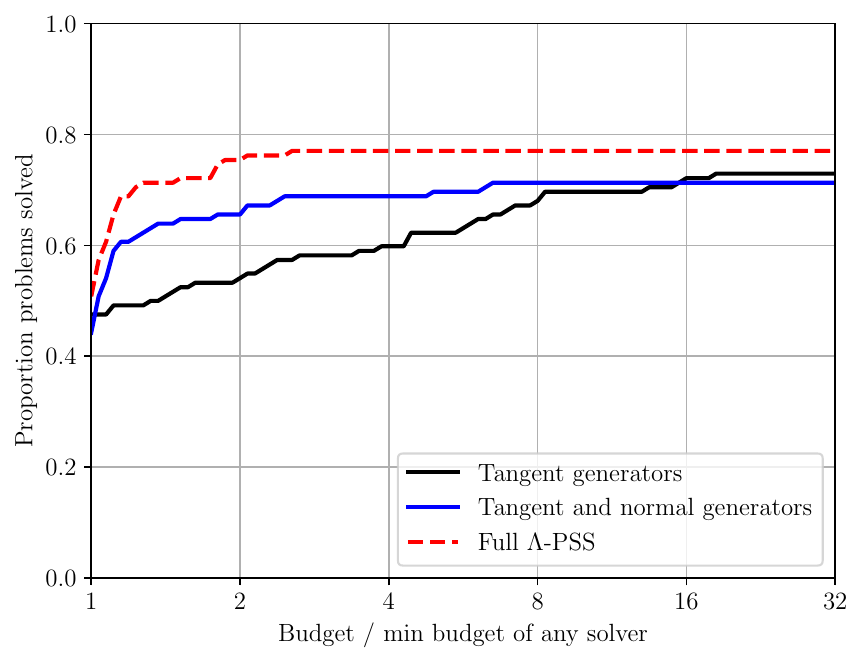}
    \caption{Performance profile, $\tau=10^{-3}$}
  \end{subfigure}
  \\
  \begin{subfigure}[b]{0.48\textwidth}
    \includegraphics[width=\textwidth]{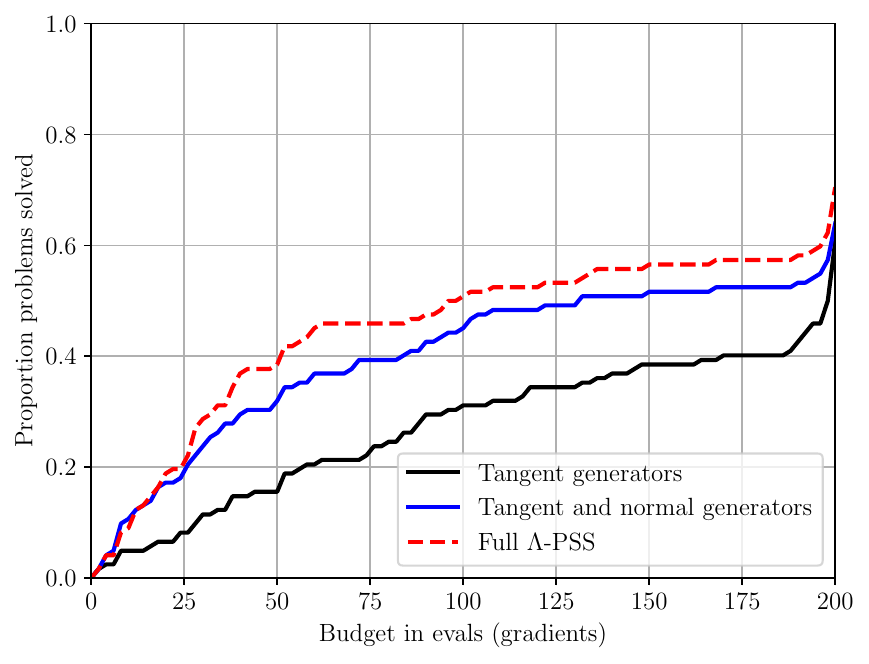}
    \caption{Data profile, $\tau=10^{-6}$}
  \end{subfigure}
  ~
  \begin{subfigure}[b]{0.48\textwidth}
    \includegraphics[width=\textwidth]{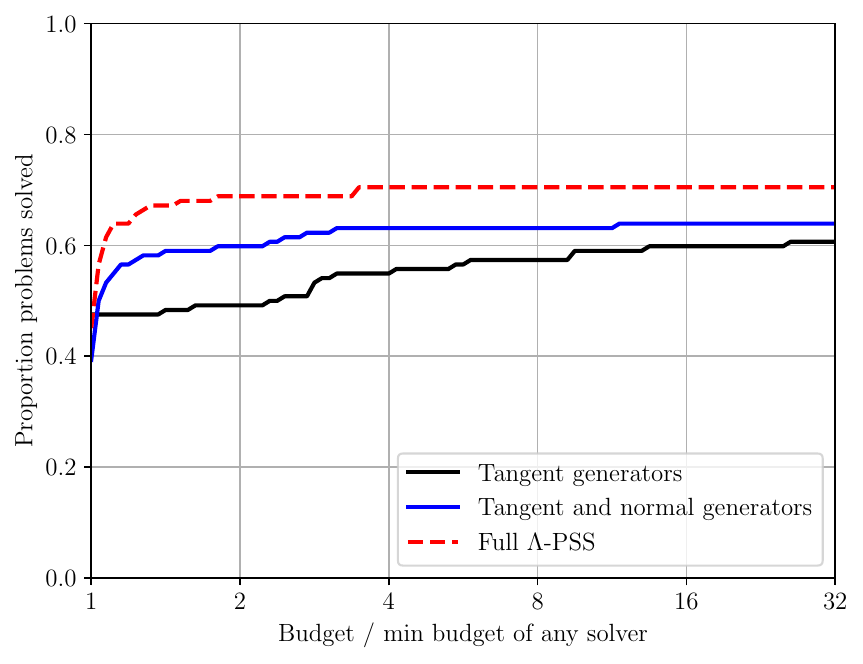}
    \caption{Performance profile, $\tau=10^{-6}$}
  \end{subfigure}
  \caption{Data and performance profiles comparing polling set generation 
  using only tangent cone generators compared to full $\Lambda$-PSS.}
  \label{fig_main_results}
\end{figure}

In Figure~\ref{fig_main_results} we show data and performance profiles 
comparing the three methods for the full set of test problems. For both 
accuracy levels $\tau=10^{-3}$ and $\tau=10^{-6}$, we observe that only using 
generators of the tangent cone is the worst-performing variant. Adding nearly 
active constraint normals (per \cite{Lewis2007}) provides a good improvement, 
but using a full $\Lambda$-PSS (i.e.~adding negative tangent cone generators) 
performs best. The benefit of $\Lambda$-PSS compared to tangent generators and 
normal directions is similar at both accuracy levels, but using only tangent 
directions is a relatively worse option when higher accuracy solutions are 
desired ($\tau=10^{-6}$).

In Appendix~\ref{app_detailed_numerics} we provide the same results, but split 
separately into bound constrained and general linear inequality constrained 
problems. For bound-constrained problems, using either normal generators or a 
full $\Lambda$-PSS has essentially identical performance, which is somewhat 
expected from our construction. The benefit of using a $\Lambda$-PSS is much 
clearer for general linear inequality constrained problems, yielding a larger 
improvement over both alternative polling set constructions.

\subsection{Estimating $\bm{\Lambda}$-PSS Quality} 
\label{sec_estimated_lambda}

The ``full $\Lambda$-PSS'' polling set tested above is only proven to satisfy 
Definition~\ref{def_lambda_pss_convex_new} for general linear inequality 
constraints in the case of linearly independent tangent cone generators 
(Assumption~\ref{ass_full_rank_normal}). We now numerically assess the quality 
of the polling set formed via the approach in 
Section~\ref{sec_practical_poll_set}.

For all test problems with general linear inequality constraints, and all 
iterations of our numerical algorithm above, we estimate the value of 
$\Lambda$ (in the sense of Definition~\ref{def_lambda_pss_convex_new}) 
achieved by the polling set in the current iteration, by solving
\begin{subequations} \label{eq_lambda_est}
\begin{align}
    \Lambda 
    = 
    \max_{\bv\in\R^n} &\:\: 
    	\left\{\min_{\bc\in\R^p} \sum_{i=1}^{q} c_i 
    	\quad \text{s.t.} \quad \sum_{i=1}^{q} c_i \bd_i = \bv, 
    	\: \bc\geq \bm{0}\right\}, \\
    \text{s.t.} 
    	&\:\: \bx_k+\bv\in \Omega, \: \|\bv\| \leq \alpha_k, 
\end{align}
\end{subequations}
where $\mathcal{D}_k = \{\bd_1, \ldots,\bd_q\}$ is the polling set at 
iteration $k$. We solve \eqref{eq_lambda_est} using the default implementation 
of DIRECT provided in the SciPy library (based on \cite{Gablonsky2001}), with 
the objective value evaluated by solving the linear program for $\bc$ using 
HiGHS \cite{Huangfu2018}.

\begin{figure}[tb]
    \centering
    \includegraphics[width=0.8\textwidth]{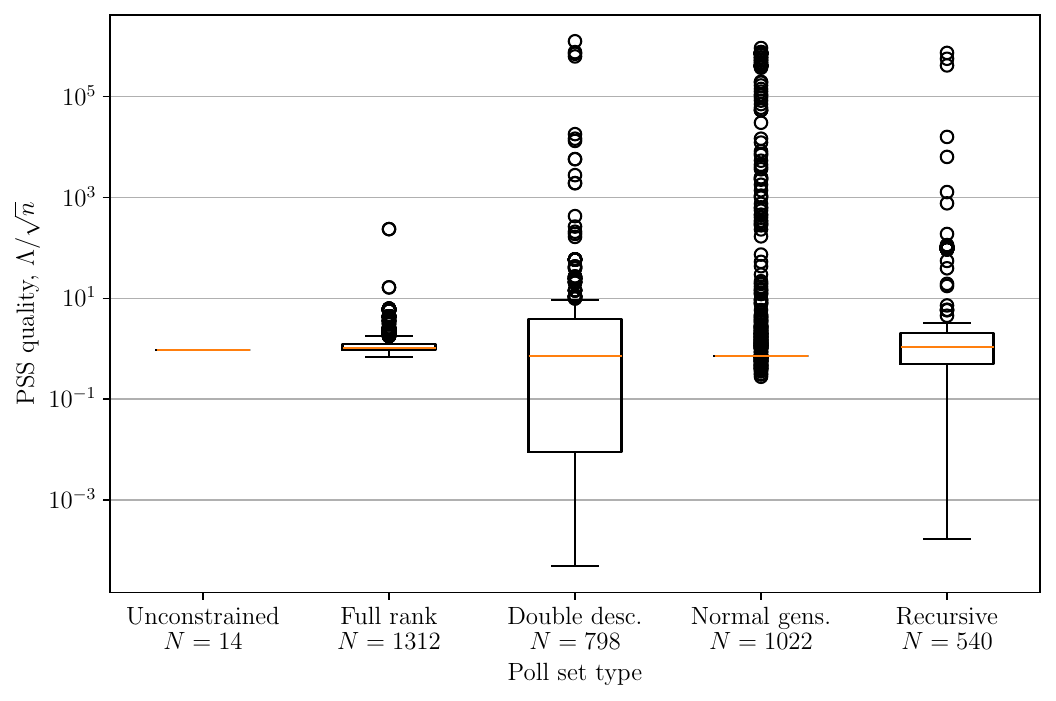}
    \caption{Distribution of estimated values $\Lambda/\sqrt{n}$ for polling 
    sets in each iteration of each general linear inequality constrained test 
    problem. Regarding the method in Section~\ref{sec_practical_poll_set}: 
    ``Unconstrained'' is case 1, ``Full rank'' is case 2, ``Double desc.'' is 
    case 3(a), ``Normal gens.'' is case 3(b), and ``Recursive'' is case 3(c). 
    In case case, the value of $N$ is how many iterations were in each case 
    across all problems.}
    \label{fig_lambda_info}
\end{figure}

In Figure~\ref{fig_lambda_info}, we plot the distribution of estimated values 
of $\Lambda/\sqrt{n}$ \eqref{eq_lambda_est} across all iterations of all test 
problems with general linear inequality constraints, split by the sub-method 
from Section~\ref{sec_practical_poll_set} used to construct the polling set.
We show $\Lambda/\sqrt{n}$ to normalize across problems of different 
dimensions, as this value is exactly 1 for unconstrained problems (with polling 
set $\{\pm \alpha_k \be_1, \ldots, \pm \alpha_k \be_n\}$).

In the `unconstrained' and `full rank' cases, we always see small values of 
$\Lambda=\bigO(\sqrt{n})$, which aligns with the theory from 
Sections~\ref{sec_newwcc_unc} and \ref{sec_poll_sets} respectively, noting 
that $n\leq 15$ for all these test problems.
For the other types of polling set constructions, the empirical values of 
$\Lambda$ are small the vast majority of the time, but there are outliers for 
which $\Lambda$ can become large.
This conforms to our theory and further demonstrates that for the cases not 
covered by our theory, the practical polling set generation in 
Section~\ref{sec_practical_poll_set} is suitable for most situations, but some 
degenerate cases may arise which require more careful polling set construction 
to ensure theoretical guarantees.

\begin{figure}[tb]
    \centering
    \includegraphics[width=0.5\textwidth]{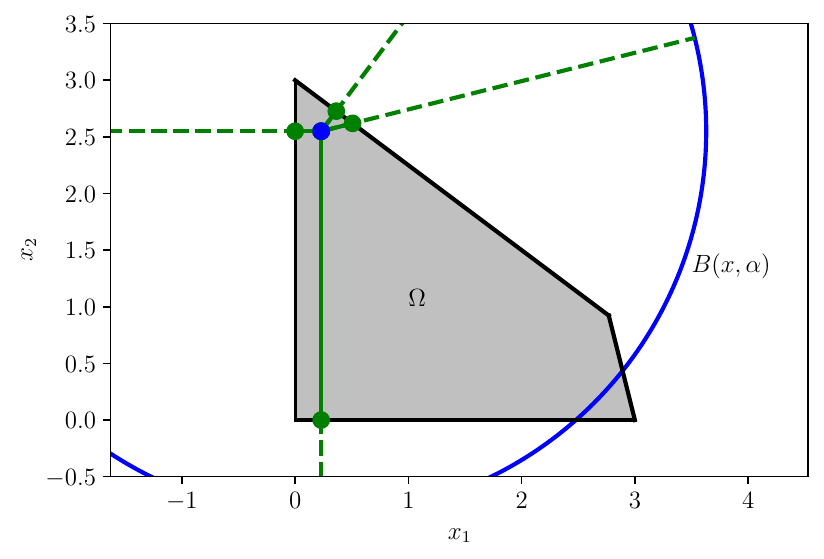}
    \caption{Example issue that can occur using generators of the normal cone 
    as a polling set in case 3(b) of Section~\ref{sec_practical_poll_set}. 
    Shaded region is $\Omega$, blue circle is $B(\bx,\alpha)$, marked points 
    are $\bx$ and the poll points based on scaled outward normals of nearly 
    active constraints. All constraints are nearly active for this $\bx$ and 
    $\alpha$.}
    \label{fig_demo_hs44}
\end{figure}

As an example, consider a situation in $\R^2$ where $\Omega$ is defined by the 
constraints $x_1,x_2 \geq 0$, $4x_1+x_2 \leq 12$ and $3x_1+4x_2 \leq 12$, and 
we are at the base point $\bx=[0.23, 2.55]^T$ with $\alpha=3.4$. This is 
depicted in Figure~\ref{fig_demo_hs44}. Here, $\bx$ is far from the constraint 
$4x_1+x_2 \leq 12$, but $\alpha$ is large enough that it is still nearly 
active. Hence $T_{\Omega}(\bx,\alpha)=\{\bm{0}\}$ and 
$N_{\Omega}(\bx,\alpha)=\R^2$. However, because $\bx$ is very close to the top 
corner, the rightward-pointing outward normals both get scaled to very short 
lengths. This means that the effective $\Lambda$ is very large, coming from 
points $\bx+\bv$ near the bottom-right corner of $B(\bx,\alpha)\cap \Omega$.
As $\bx$ moves closer to the top corner of $\Omega$, the effectively $\Lambda$ 
grows.

\section{Conclusion}
\label{sec_conc}

We have introduced the concept of a $\Lambda$-PSS as an alternative to the 
commonly-used positive spanning sets for direct search. This allowed us to 
re-derive the existing theory for unconstrained direct search, but with a 
theoretical underpinning more aligned with model-based DFO, and which 
naturally generalizes to convex-constrained problems. We provided a global 
convergence and worst-case complexity analysis for general convex-constrained 
direct search, and  specific constructions for polling set generation for 
bound and linear inequality constrained problems with theoretical guarantees 
and strong practical performance.

In presence of linearly dependent nearly active constraints (i.e.~when 
Assumption~\ref{ass_full_rank_normal} does not apply), our proposed 
construction does not guarantee the $\Lambda$-PSS property, despite exhibiting 
good practical performance. Further work is thus needed to provably build 
$\Lambda$-PSS in this setting. In addition, combining $\Lambda$-PSSs with 
probabilistic polling set construction~\cite{Gratton2019} or polling in 
randomly generated subspaces~\cite{Roberts2023} is a promising direction 
to improve the scalability and performance of these techniques.

\addcontentsline{toc}{section}{References} 
\bibliographystyle{siam}
{ 
\bibliography{refs} 
}

\appendix


\section{List of Test Problems} \label{app_test_problems}

\begin{table}[H]
    \centering
    {\scriptsize
    \begin{tabular}{ccc|ccc}
        Name (params) & $n$ & NB & Name (params) & $n$ & NB \\ \hline
        ALLINIT & 3 & 3 & MAXLIKA & 8 & 16 \\
        BQP1VAR & 1 & 2 & MCCORMCK ($N=10$) & 10 & 20 \\
        CAMEL6 & 2 & 4 & MCCORMCK ($N=50$) & 50 & 100 \\
        CHEBYQAD ($N=10$) & 10 & 20 & MDHOLE & 2 & 1 \\
        CHEBYQAD ($N=20$) & 20 & 40 & NCVXBQP1 ($N=10$) & 10 & 20 \\
        CHENHARK (*) & 10 & 10 & NCVXBQP1 ($N=50$) & 50 & 100 \\
        CVXBQP1 ($N=10$) & 10 & 20 & NCVXBQP2 ($N=10$) & 10 & 20 \\
        CVXBQP1 ($N=50$) & 50 & 100 & NCVXBQP2 ($N=50$) & 50 & 100 \\
        DEGDIAG ($N=10$) & 11 & 11 & NCVXBQP3 ($N=10$) & 10 & 20 \\
        DEGDIAG ($N=50$) & 51 & 51 & NCVXBQP3 ($N=50$) & 50 & 100 \\
        DEGTRID ($N=10$) & 11 & 11 & NOBNDTOR ($Q=2$) & 4 & 4 \\
        DEGTRID ($N=50$) & 51 & 51 & OBSTCLAE ($PX=4$, $PY=4$) & 4 & 8 \\
        EG1 & 3 & 4 & OBSTCLBL ($PX=4$, $PY=4$) & 4 & 8 \\
        EXPLIN ($N=12$, $M=6$) & 12 & 24 & OSLBQP & 8 & 11 \\
        EXPLIN2 ($N=12$, $M=6$) & 12 & 24 & PALMER1A & 6 & 2 \\
        EXPQUAD ($N=12$, $M=6$) & 12 & 12 & PALMER2B & 4 & 2 \\
        HARKERP2 ($N=10$) & 10 & 10 & PALMER3E & 8 & 1 \\
        HART6 & 6 & 12 & PALMER4A & 6 & 2 \\
        HATFLDA & 4 & 4 & PALMER5B & 9 & 2 \\
        HATFLDB & 4 & 5 & PFIT1LS & 3 & 1 \\
        HIMMELP1 & 2 & 4 & POWELLBC ($P=5$) & 10 & 20 \\
        HS1 & 2 & 1 & POWELLBC ($P=10$) & 20 & 40 \\
        HS110 ($N=10$) & 10 & 20 & PROBPENL ($N=10$) & 10 & 20 \\
        HS110 ($N=50$) & 50 & 100 & PROBPENL ($N=50$) & 50 & 100 \\
        HS2 & 2 & 1 & PSPDOC & 4 & 1 \\
        HS25 & 3 & 6 & QRTQUAD ($N=12$, $M=6$) & 12 & 24 \\
        HS3 & 2 & 1 & S368 ($N=8$) & 8 & 16 \\
        HS38 & 4 & 8 & S368 ($N=50$) & 50 & 100 \\
        HS3MOD & 2 & 1 & SCOND1LS ($N=10$, $LN=9$) & 10 & 20 \\
        HS4 & 2 & 2 & SCOND1LS ($N=50$, $LN=45$) & 50 & 100 \\
        HS45 & 5 & 10 & SIMBQP & 2 & 2 \\
        HS5 & 2 & 4 & SINEALI ($N=10$) & 10 & 20 \\
        JNLBRNG1 ($PT=4$, $PY=4$) & 4 & 4 & SINEALI ($N=20$) & 20 & 40 \\
        JNLBRNG2 ($PT=4$, $PY=4$) & 4 & 4 & SPECAN ($K=3$) & 9 & 18 \\
        JNLBRNGA ($PT=4$, $PY=4$) & 4 & 4 & TORSION1 ($Q=2$) & 4 & 8 \\
        JNLBRNGB ($PT=4$, $PY=4$) & 4 & 4 & TORSIONA ($Q=2$) & 4 & 8 \\
        KOEBHELB & 3 & 2 & WEEDS & 3 & 4 \\
        LINVERSE ($N=10$) & 19 & 10 & YFIT & 3 & 1 \\
        LOGROS & 2 & 2 &  \\
        \hline
    \end{tabular}
    }  
    \caption{List of 77 bound-constrained CUTEst problems used for numerical experiments. (* parameters for CHENHARK are $N=10$, $\text{NFREE}=5$, $\text{NDEGEN}=2$)}
    \label{tab:pbboundcons}
\end{table}

\begin{table}[H]
    \centering
    {\scriptsize
    \begin{tabular}{cccc|cccc}
        Name (params) & $n$ & NB & LI & Name (params) & $n$ & NB & LI \\ \hline
        AVGASA & 8 & 16 & 10 & HS86 & 5 & 5 & 10 \\
        AVGASB & 8 & 16 & 10 & HUBFIT & 2 & 1 & 1 \\
        BIGGSC4 & 4 & 8 & 13 & LSQFIT & 2 & 1 & 1 \\
        EQC & 7 & 14 & 3 & OET1 & 3 & 0 & 1002 \\
        EXPFITA & 5 & 0 & 22 & OET3 & 4 & 0 & 1002 \\
        EXPFITB & 5 & 0 & 102 & PENTAGON & 6 & 0 & 15 \\
        EXPFITC & 5 & 0 & 502 & PT & 2 & 0 & 501 \\
        HATFLDH & 4 & 8 & 13 & QC & 7 & 14 & 4 \\
        HS105 & 8 & 16 & 1 & QCNEW & 7 & 14 & 3 \\
        HS118 & 15 & 30 & 29 & S268 & 5 & 0 & 5 \\
        HS21 & 2 & 4 & 1 & SIMPLLPA & 2 & 2 & 2 \\
        HS21MOD & 7 & 8 & 1 & SIMPLLPB & 2 & 2 & 3 \\
        HS24 & 2 & 2 & 3 & SIPOW1 & 2 & 0 & 2000 \\
        HS268 & 5 & 0 & 5 & SIPOW1M & 2 & 0 & 2000 \\
        HS35 & 3 & 3 & 1 & SIPOW2 & 2 & 0 & 2000 \\
        HS35I & 3 & 6 & 1 & SIPOW2M & 2 & 0 & 2000 \\
        HS35MOD & 2 & 2 & 1 & SIPOW3 & 4 & 0 & 2000 \\
        HS36 & 3 & 6 & 1 & SIPOW4 & 4 & 0 & 2000 \\
        HS37 & 3 & 6 & 2 & STANCMIN & 3 & 3 & 2 \\
        HS44 & 4 & 4 & 6 & TFI2 & 3 & 0 & 101 \\
        HS44NEW & 4 & 4 & 6 & TFI3 & 3 & 0 & 101 \\
        HS76 & 4 & 4 & 3 & ZECEVIC2 & 2 & 4 & 2 \\
        HS76I & 4 & 8 & 3 &  &  &  &  \\
        \hline
    \end{tabular}
    }  
    \caption{List of 45 linear inequality constrained CUTEst problems used for numerical experiments.}
    \label{tab:pblincons}
\end{table}

Tables~\ref{tab:pbboundcons} and~\ref{tab:pblincons}
contain lists of the 77 bound-constrained and 45 general linear inequality constrained CUTEst problems \cite{Gould2015,Fowkes2022} used for the numerical experiments in Section~\ref{sec_numerics}, based on the problems used in \cite{Gratton2019}.
Any values in brackets after the problem name are the  optional problem parameters used.
The columns $n$, NB and LI are the problem dimension, number of (finite) bound constraints, and number of (finite) general linear inequality constraints for each problem respectively.

\section{Detailed Numerical Results} \label{app_detailed_numerics}
Here, we show the same numerical results as in Figure~\ref{fig_main_results}, but split separately into the 77 bound-constrained test problems and the 45 problems with general linear inequality constraints. 
For brevity, we only show data profiles.

\begin{figure}[H]
  \centering
  \begin{subfigure}[b]{0.35\textwidth}
    \includegraphics[width=\textwidth]{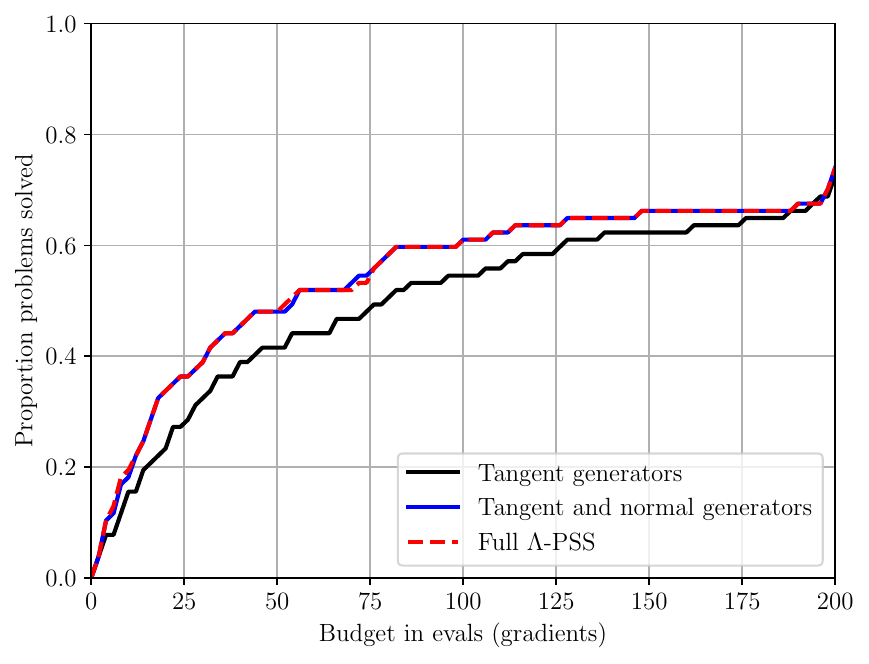}
    \caption{Bound constraints, $\tau=10^{-3}$}
  \end{subfigure}
  ~
  \begin{subfigure}[b]{0.35\textwidth}
    \includegraphics[width=\textwidth]{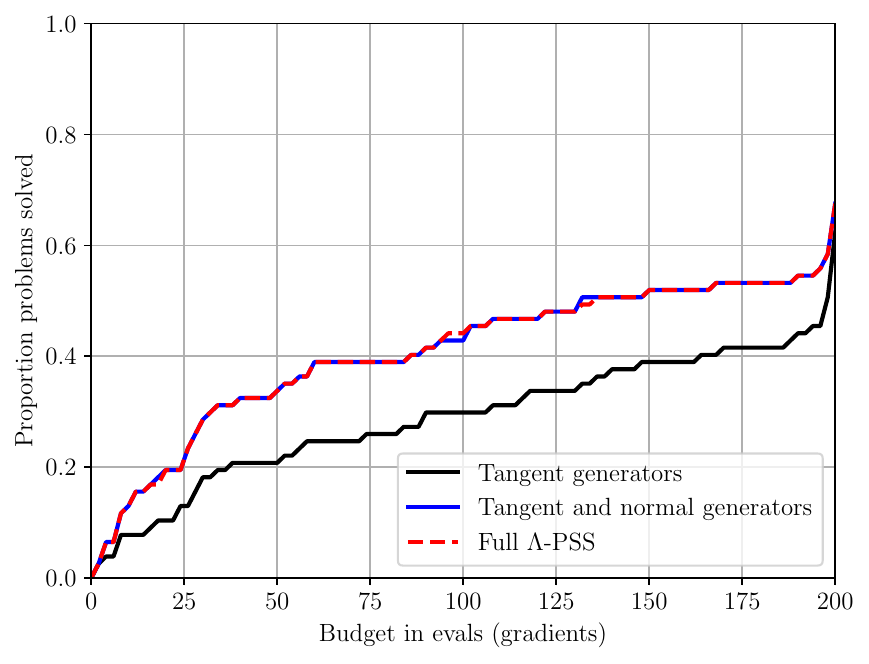}
    \caption{Bound constraints, $\tau=10^{-6}$}
  \end{subfigure}
  \\
  \begin{subfigure}[b]{0.35\textwidth}
    \includegraphics[width=\textwidth]{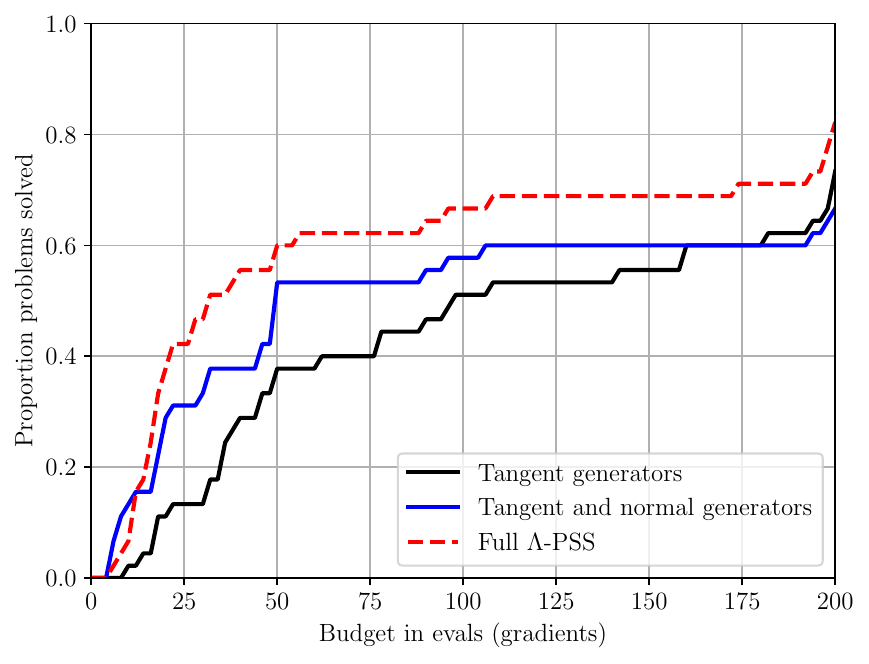}
    \caption{Linear ineq.~constraints, $\tau=10^{-3}$}
  \end{subfigure}
  ~
  \begin{subfigure}[b]{0.35\textwidth}
    \includegraphics[width=\textwidth]{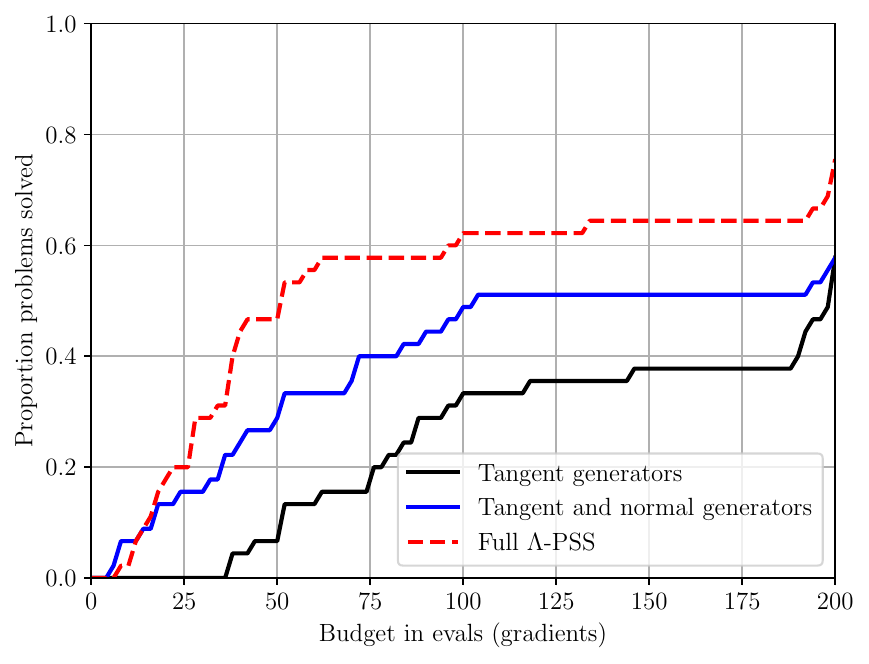}
    \caption{Linear ineq.~constraints, $\tau=10^{-6}$}
  \end{subfigure}
  \caption{Numerical results split by problem type.}
  \label{fig_detailed_numerics}
\end{figure}

\end{document}